\theoremstyle{theorem}
\newtheorem{theorem}{Theorem}[section]
\newtheorem{lemma}[theorem]{Lemma}
\newtheorem{proposition}[theorem]{Proposition}
\newtheorem{corollary}[theorem]{Corollary}
\theoremstyle{definition}
\newtheorem{definition}[theorem]{Definition}
\newtheorem{remark}[theorem]{Remark}
\newtheorem*{Notation}{Notation}
\begin{document}

\title[On the failure of the first \v{C}ech homotopy group]{\large O\lowercase{n the failure of the first} \v{C}\lowercase{ech homotopy group to register geometrically relevant fundamental group elements}}

\author{Jeremy Brazas}

\address{Department of Mathematics, West Chester University of Pennsylvania, West Chester, PA 19383, USA}

\email{jbrazas@wcupa.edu}

\author{Hanspeter Fischer}

\address{Department of Mathematical Sciences, Ball State University, Muncie, IN 47306, USA}

\email{hfischer@bsu.edu}

\subjclass[2010]{55Q07, 55Q05, 57M12, 57M05}

\keywords{First \v{C}ech homotopy group; first shape group; strongly homotopically Hausdorff; homotopically path Hausdorff; 1-UV$_0$; generalized covering projection; discrete monodromy property; inverse limit of free monoids}

\date{\today}

\begin{abstract}
We construct a space $\mathbb{P}$ for which the canonical homomorphism $\pi_1(\mathbb{P},p) \rightarrow \check{\pi}_1(\mathbb{P},p)$ from the fundamental group to the first \v{C}ech homotopy group is not injective, although it has all of the following properties: (1) $\mathbb{P}\setminus\{p\}$ is a 2-manifold with connected non-compact boundary; (2) $\mathbb{P}$ is connected and locally path connected; (3) $\mathbb{P}$ is strongly homotopically Hausdorff; (4) $\mathbb{P}$ is  homotopically path Hausdorff; (5) $\mathbb{P}$ is  1-UV$_0$; (6) $\mathbb{P}$ admits a simply connected generalized covering space with monodromies between fibers that have discrete graphs; (7) $\pi_1(\mathbb{P},p)$ naturally injects into the inverse limit of finitely generated free monoids otherwise associated with the Hawaiian Earring; (8) $\pi_1(\mathbb{P},p)$ is locally free.
\end{abstract}

\maketitle

\section{Introduction}

The geometric significance of the elements of the fundamental group $\pi_1(X,x)$ of a connected and locally path-connected metric space $X$ is most prominently on display in the context of a simply connected covering space (if it exists) where these elements comprise the group of deck transformations of the covering projection. In this situation, the canonical homomorphism  $\pi_1(X,x) \rightarrow \check{\pi}_1(X,x)$ to the first \v{C}ech homotopy group (also called the first shape group \cite{MS}) is an isomorphism~\cite{FZ2007}.

In fact, as long as all fundamental group elements are accounted for, that is, if $\pi_1(X,x) \hookrightarrow \check{\pi}_1(X,x)$ is injective, the standard covering construction yields a {\em generalized} covering projection $p:\widetilde{X}\rightarrow X$ with connected, locally path-connected and simply connected $\widetilde{X}$ \cite{FZ2007}. It is characterized by the usual unique lifting property and we have $\pi_1(X,x)\cong Aut(\widetilde{X} \stackrel{p}{\rightarrow} X)$. Examples of spaces for which $\pi_1(X,x) \hookrightarrow \check{\pi}_1(X,x)$ is injective include all one-dimensional separable metric spaces \cite{CF1959b,EK}, all planar spaces \cite{FZ2005}, the Pontryagin sphere, the Pontryagin surface $\Pi_2$, and similar inverse limits of higher-dimensional manifolds \cite{FGu}.

Several weaker properties that quantify the geometric relevance of the elements of $\pi_1(X,x)$ can be found in the literature. The {\em strongly homotopically Hausdorff} property, for example, stipulates that for every essential loop in $X$ there should be a limit to how small it can be made at a particular point by a free homotopy \cite{CMRZZ}. The {\em homotopically path Hausdorff} property, on the other hand, calls for $\pi_1(X,x)$ to be T$_1$ in the quotient topology induced by the compact-open topology on the loop space $\Omega(X,x)$ \cite{BFa}. Both of these properties are implied by $\pi_1(X,x) \hookrightarrow \check{\pi}_1(X,x)$ being injective \cite{FRVZ}.

Then there are properties that guarantee, in and of themselves, the existence of a simply connected generalized covering space. These include
the homotopically path Hausdorff property above and the {\em 1-UV$_0$} property, which requires small null-homotopic loops to contract via small homotopies \cite{BFi,FRVZ}.

Even if a generalized covering projection with simply connected domain exists, it might not be a fibration and the monodromies between fibers might not be continuous \cite{FGa}. (Such is the case for the Hawaiian Earring.) However, for all one-dimensional spaces and for all planar spaces, these  monodromies have {\em discrete graphs}; a fact implicitly used in the work of Eda \cite{E2002,E2010} and Conner-Kent \cite{CK}. If any two spaces with this property (cf.\@ Definition~\ref{DefDM}) are homotopy equivalent, then their respective wild sets (points at which they are not semilocally simply connected) are homeomorphic (Theorem~\ref{utility}).

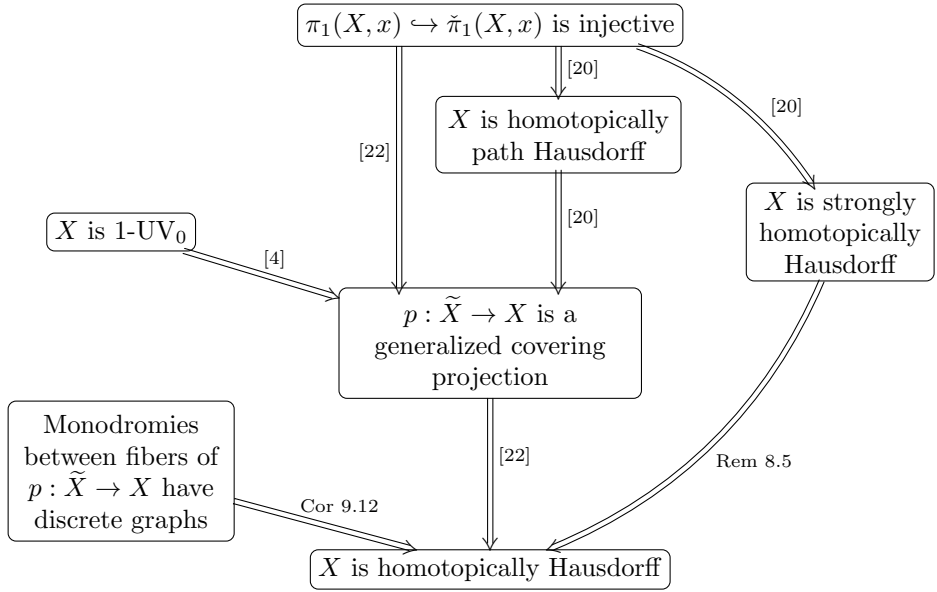
\begin{figure}
\xymatrix@R=7pt{
  &*+[F-:<3pt>]{\mbox{$\pi_1(X,x)\hookrightarrow \check{\pi}_1(X,x)$ is injective}}
  \ar@{=>}[]!<-8ex,0ex>;[dddd]!<-8ex,0ex>_(0.4){\text{[22]}}
 \ar@{=>}[]!<6ex,0ex>;[dd]!<6ex,0ex>^(0.4){\text{[20]}}
 \ar@{=>}@/^2pc/[dddr]^(0.7){\text{[20]}}
  \\ &&\\
&*+!<-6ex,0ex>[F-:<3pt>]{\parbox{1.2in}{\center $X$ is homotopically path Hausdorff}}\ar@{=>}[]!<6ex,0ex>;[dd]!<6ex,0ex>^(0.4){\text{[20]}}& \\
*+[F-:<3pt>]{\mbox{$X$ is 1-UV$_0$}} \ar@{=>}[dr]^(0.4){\text{[4]}} & & *+[F-:<3pt>]{\parbox{0.9in}{\center \vspace{-2pt} $X$ is strongly homotopically Hausdorff}} \ar@{=>}@/^2pc/[dddl]^{\text{Rem 8.5} } \\
& *+[F-:<3pt>]{\parbox{1.5in}{ \center \vspace{-2pt} $p:\widetilde{X}\rightarrow X$ is a generalized covering projection}} \ar@{=>}[dd]^(0.5){\text{[22]}}& \\
 *+[F-:<3pt>]{\parbox{1.1in}{\center Monodromies between fibers of $p:\widetilde{X}\rightarrow X$ have discrete graphs}} \ar@{=>}[dr]^{\text{\hspace{10pt} Cor 9.12}}& & \\
 & *+[F-:<3pt>]{\parbox{1.8in}{$X$ is homotopically Hausdorff}}& \\
}
\caption{Relationships between some local  properties of $\pi_1(X,x)$}
\end{figure}

As far as the algebraic structure of fundamental groups of low-dimensional spaces is concerned, we recall that the fundamental group of an arbitrary planar Peano continuum (not necessarily homotopy equivalent to a one-dimensional space) is isomorphic to a subgroup of the fundamental group of some one-dimensional planar Peano continuum \cite{CC}.
In turn,
 fundamental groups of one-dimensional path-connected separable metric spaces are locally free \cite{CF1959b} and, in the compact case, have structures similar to that of the Hawaiian Earring $\mathbb{H}$, where the injective function $\pi_1(\mathbb{H})\hookrightarrow \check{\pi}_1(\mathbb{H})$ factors through the limit $\mathcal M$  of an inverse system ${\mathcal M}_1\stackrel{R_1}{\longleftarrow} {\mathcal M}_2\stackrel{R_2}{\longleftarrow}{\mathcal M}_3\stackrel{R_3}{\longleftarrow} \cdots$ of  free monoids ${\mathcal M}_n$ on $\{\ell_1^{\pm 1}, \ell_2^{\pm 1}, \cdots, \ell_n^{\pm 1}\}$ with $R_{n-1}$ deleting the letters $\ell_n$ and $\ell_n^{-1}$ from every word \cite{DTW,FZ2013b}. \vspace{5pt}

This raises the following
\vspace{5pt}

\noindent {\bf Question:} {\em Is there a space $X$ for which $\pi_1(X)\rightarrow \check{\pi}_1(X)$ is \underline{not} injective, but with all of the  other properties discussed above:  low-dimensional, strongly homotopically Hausdorff, homotopically path Hausdorff, 1-UV$_0$, admits a generalized universal covering $p:\widetilde{X}\rightarrow X$ whose monodromies between fibers have discrete graphs, admits a natural injective function $\pi_1(X)\rightarrow {\mathcal M}$, and has locally free fundamental group?}
\vspace{5pt}

We present a relatively simple and prototypical construction of a two-dimensional space that yields a positive answer to this question.

Specifically, in Section~\ref{construction}, we define the space $\mathbb{P}$ mentioned in the abstract, by attaching countably many ``pairs of pants'' to the Hawaiian Earring $\mathbb{H}$, and identify its fundamental group as a direct limit of groups each isomorphic to $\pi_1(\mathbb{H})$ with injective bonding homomorphisms. In particular, $\pi_1(\mathbb{P})$ is locally free (Proposition~\ref{lf}).

We show that $\pi_1(\mathbb{P})\rightarrow \check{\pi}_1(\mathbb{P})$ is not injective (Theorem~\ref{thm:notinj}), but that
$\mathbb{P}$ is both strongly homotopically Hausdorff (Theorem~\ref{SHHthm}) and homotopically path Hausdorff (Theorem~\ref{HPH}); as far as the authors know, it is the first such example (Remarks~\ref{Z'} and \ref{Y'}). The proof of the latter property hinges on the fact that $\pi_1(\mathbb{P})$ naturally injects into the inverse limit of monoids associated with the Hawaiian Earring\linebreak (Theorem~\ref{inj}), despite $\pi_1(\mathbb{P})$ not being isomorphic to a subgroup of an inverse limit of free groups (Remark~\ref{noalgebra}). Moreover, we show that $\mathbb{P}$ is 1-UV$_0$ (Theorem~\ref{UVThm}) and that $\pi_1(\mathbb{P})$ is locally free (Proposition~\ref{lf}).

After a brief review of generalized covering space theory (Section~\ref{review}) we show that the monodromies for the simply connected generalized covering space of $\mathbb{P}$ have discrete graphs (Theorem~\ref{PhasDMP}). We also discuss some general aspects of this  property, such as its relationship to the homotopically Hausdorff property relative to a subgroup of the fundamental group (Proposition~\ref{DMP->HH}) and its impact on the stability of wild subsets under homotopy equivalence (Theorem~\ref{utility}).

\section{The Hawaiian pants $\mathbb{P}$}\label{construction}

Let $C_n\subseteq \mathbb{R}^2$ be the circle of radius $\frac{1}{n}$ centered at $(\frac{1}{n},0)$ and let $\mathbb{H}=\bigcup_{i=1}^\infty C_i\subseteq \mathbb{R}^2$ be the usual Hawaiian Earring with basepoint $b_0=(0,0)$.  Define $\ell_n:[0,1]\rightarrow C_n$ by $\ell_n(t)=(\frac{1}{n}(1-\cos 2\pi t), \frac{1}{n}\sin 2\pi t))$.

 For  $n\in \mathbb{N}$, let $D_{n,1}$ and $D_{n,2}$ be two disjoint disks in the interior $D_n^\circ$ of a disk $D_n\subseteq \mathbb{R}^2$ and consider the ``pair of pants'' $P_n=D_n\setminus (D^\circ_{n,1}\cup D^\circ_{n,2})$. Let $\alpha_n, \beta_n, \gamma_n:[0,1]\rightarrow P_n$ be parametrizations of the boundaries $\partial D_n$, $\partial D_{n,1}$,  and $\partial D_{n,2}$, respectively, with clockwise orientation.

  Let $\mathbb{P}$ be the space obtained from $\mathbb{H}$ by attaching all $P_n$ via maps $f_n:\partial P_n\rightarrow \mathbb{H}$  such that $f_n\circ\alpha_n=\ell_{n}$, $f_n\circ\beta_n=\ell_{2n}$ and $f_n\circ\gamma_n=\ell_{2n+1}$. That is,  we put $\mathbb{P}=\mathbb{H}\cup_{f} \left(\coprod_{n\in \mathbb{N}} P_n\right)$ where $f:\coprod_{n\in\mathbb{N}}\partial P_n\rightarrow \mathbb{H}$ and $f|_{\partial P_n}=f_n$. We refer to $\mathbb{P}$ as the ``Hawaiian Pants''. (See Figure~\ref{pants}.)
\begin{figure}
\hspace{.4in} \includegraphics[scale=1]{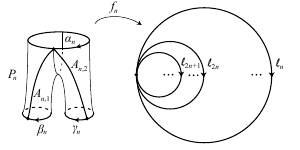}
\caption{\label{pants} Attaching pairs of pants to the Hawaiian Earring}
\end{figure}

We observe that exactly one pair of pants is attached to  $C_1$, namely $P_1$ via $f_1|_{\partial D_1}$, and that for each $n\geqslant 2$, exactly two pairs of pants are attached to $C_n$, namely $P_n$ via $f_n|_{\partial D_n}$, and either $P_{n/2}$ via $f_{n/2}|_{\partial D_{n/2,1}}$ (if $n$ is even) or $P_{(n-1)/2}$ via $f_{(n-1)/2}|_{\partial D_{(n-1)/2,2}}$ (if $n$ is odd). In particular:

\begin{proposition} \hspace{1in}
\begin{itemize}
\item[(a)] $\mathbb{P}\setminus \{b_0\}$ is a 2-manifold with boundary $C_1\setminus \{b_0\}$.
\item[(b)] $\mathbb{P}$ is connected and locally path connected.
\end{itemize}
\end{proposition}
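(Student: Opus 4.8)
The plan is to establish each part by a direct analysis of the attaching construction, working from the explicit description of $\mathbb{P}$ as $\mathbb{H}\cup_f\coprod_n P_n$. For part (a), the key observation is that a pair of pants $P_n$ is itself a compact $2$-manifold with boundary $\partial D_n\sqcup\partial D_{n,1}\sqcup\partial D_{n,2}$, and the attaching maps $f_n$ are restrictions of the parametrizations $\ell_k$, which are homeomorphisms of $[0,1]/\{0,1\}$ onto the circles $C_k$. I would first note, from the bookkeeping already recorded in the excerpt, that each circle $C_n$ with $n\geqslant 2$ receives exactly two boundary circles glued to it (one from $P_n$ via $\partial D_n$ and one from the ``parent'' $P_{n/2}$ or $P_{(n-1)/2}$ via an inner boundary), while $C_1$ receives exactly one (from $P_1$ via $\partial D_1$). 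Thus, away from $b_0$, a neighborhood of an interior point of $C_n$ ($n\geqslant 2$) is obtained by gluing two half-disk collar neighborhoods along an arc, which is a disk; a neighborhood of an interior point of $C_1$ is a single half-disk collar, contributing boundary; and points in the interior of some $P_n$ obviously have Euclidean neighborhoods. The one subtlety is the accumulation behavior: since $\mathrm{diam}(C_n)\to 0$ and the pants $P_n$ can be taken with shrinking diameters so that the $P_n$ accumulate only at $b_0$, every point of $\mathbb{P}\setminus\{b_0\}$ lies in only finitely many of the pieces $C_k\cup P_k$ near it, so the local model is genuinely a finite gluing. Hence $\mathbb{P}\setminus\{b_0\}$ is a $2$-manifold whose boundary is exactly the set of points with a half-plane (not full-plane) neighborhood, namely $C_1\setminus\{b_0\}$.

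For part (b), connectedness is immediate because each $P_n$ is connected and meets $\mathbb{H}$ (its boundary maps into $\mathbb{H}$), and $\mathbb{H}$ is connected; so $\mathbb{P}$ is a union of connected sets all meeting the connected set $\mathbb{H}$. For local path connectedness, I would check it pointwise. At any point of $\mathbb{P}\setminus\{b_0\}$ local path connectedness follows from part (a), since manifolds (with boundary) are locally path connected. At $b_0$ itself, I would exhibit a neighborhood basis of path-connected open sets: take, for each $N$, the union of $\bigcup_{n>N}(C_n\cup P_n)$ together with small contractible neighborhoods of $b_0$ inside each $C_n$ and $P_n$ for $n\le N$; each such set is path connected because every $C_n\cup P_n$ for $n>N$ is path connected and contains $b_0$, and this forms a basis at $b_0$ by the shrinking-diameter condition.

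The main obstacle is the verification at the accumulation point and the precise identification of the boundary: one must be careful that the gluing of two pants-boundaries to a single circle $C_n$ really does produce a manifold point (no boundary, no branching) and that no extra boundary or singular behavior is introduced along the $C_n$ for $n\geqslant 2$, while $C_1$ genuinely remains boundary because only one pants-leg is attached there. This is a local computation with collar neighborhoods: a collar of $\partial D_n$ in $P_n$ is an annulus $C_n\times[0,1)$, and gluing two such annuli along $C_n\times\{0\}$ (after matching orientations via the $\ell_k$ parametrizations) yields $C_n\times(-1,1)$, an open annulus, confirming interior-manifold structure along $C_n$. Once this local picture is in hand together with the observation that $b_0$ is the only accumulation point of the family $\{C_n\cup P_n\}$, both statements follow without further difficulty.
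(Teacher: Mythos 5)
The paper offers no written proof (it records the attachment bookkeeping --- one pair of pants glued along $C_1$, two along each $C_n$ with $n\geqslant 2$ --- and states the proposition as an immediate consequence), and your part (a) is essentially a correct fleshing-out of exactly that: two half-open collar annuli glued along $C_n$ give interior points, the single collar along $C_1$ gives boundary, and interior points of the pants are Euclidean. One small caveat: the phrase about choosing the pants ``with shrinking diameters so that the $P_n$ accumulate only at $b_0$'' is a metric picture that does not literally apply to $\mathbb{P}$, which carries the weak (quotient) topology of the adjunction $\mathbb{H}\cup_f\coprod_n P_n$ and is not even metrizable; the correct justification for local finiteness away from $b_0$ is simply that the images of the $P_n^\circ$ are open and pairwise disjoint and that only the finitely many pieces glued along $C_n$ meet a small arc of $C_n\setminus\{b_0\}$. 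Your connectedness argument in (b) is fine.

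There is, however, a genuine gap in your verification of local path connectedness at $b_0$. You claim that the sets $W_N=\bigcup_{n>N}(C_n\cup P_n)$ together with small neighborhoods of $b_0$ in the pieces with $n\leqslant N$ ``form a basis at $b_0$.'' They do not: $\mathbb{P}$ is not first countable at $b_0$ (the paper points this out explicitly, and likewise notes in the proof of Theorem~\ref{SHHthm} that the analogous sets $V_n$ fail to be a neighborhood basis), so no countable family can be a basis there; concretely, $U=\mathbb{H}\cup\bigcup_n f_n(N_n)$, where $N_n$ is a thin open collar of $\partial P_n$ in $P_n$, is an open neighborhood of $b_0$ in the quotient topology that contains no $P_k$ entirely, hence contains none of your sets. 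So your argument only produces path-connected neighborhoods inside rather special $U$'s and does not verify the definition. The repair is to work with an arbitrary open $U\ni b_0$: its preimage in $\mathbb{H}$ is an open neighborhood of $b_0$ and its preimage in each $P_n$ is an open neighborhood of $\{a_n,b_n,c_n\}$; choose a path-connected open $W\subseteq U\cap\mathbb{H}$ containing $b_0$ (possible since $\mathbb{H}$ is locally path connected), and inside each $f_n^{-1}(U)$ choose an open set $N_n\subseteq P_n$ with $N_n\cap\partial P_n=f_n^{-1}(W)$ each of whose points can be joined within $N_n$ to $\partial P_n$ (e.g.\ a collar that deformation retracts onto $f_n^{-1}(W)\cap\partial P_n$, as in the construction of the sets $V_{k,n}$ in the proof of Theorem~\ref{SHHthm}). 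Then $V=W\cup\bigcup_n f_n(N_n)$ is open in the quotient topology, path connected, and satisfies $b_0\in V\subseteq U$, which is what local path connectedness at $b_0$ requires.
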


  We have $f^{-1}_n(b_0)=\{a_n,b_n,c_n\}$ with $a_n\in \partial D_n$, $b_n\in \partial D_{n,1}$ and $c_n\in \partial D_{n,2}$.
  Choose arcs $A_{n,1}, A_{n,2}\subseteq P_n$ such that $A_{n,1}\cap \partial P_n=\partial A_{n,1}=\{a_n,b_n\}$, $A_{n,2}\cap \partial P_n=\partial A_{n,2}=\{a_n,c_n\}$, and $A_{n,1}\cap A_{n,2}=\{a_n\}$, configured as in Figure~\ref{pants}. Let $B_{n,j}$ be the image of $A_{n,j}$ in $\mathbb{P}$ when attaching $P_n$. Then $B_{n,j}$ is homeomorphic to a circle. We regard $P^\circ_n=D^\circ_n\setminus(D_{n,1}\cup D_{n,2})$ as a subspace of $\mathbb{P}$.

Let $\mathbb{H}_n=\bigcup_{i=n}^\infty C_i$ and let $\mathbb{P}_n$ be the space obtained from $\mathbb{H}_n$ by attaching  $P_n,P_{n+1},P_{n+2},\dots$. Define $\mathbb{P}_n^+=\bigcup_{i=1}^{n-1} (B_{i,1}\cup B_{i,2})\cup \mathbb{P}_n$ for $n\geqslant 2$ and put $\mathbb{P}_1^+=\mathbb{P}_1=\mathbb{P}$. Then $\mathbb{P}=\mathbb{P}^+_1 \supseteq \mathbb{P}^+_2 \supseteq  \mathbb{P}^+_3 \supseteq \cdots$.

    Since each $P_n$ deformation retracts onto $\partial D_{n,1}\cup A_{n,1}\cup A_{n,2}\cup \partial D_{n,2}$, there are  deformation retractions $\phi_n:\mathbb{P}^+_n\times [0,1] \rightarrow \mathbb{P}^+_n$ such that
    \begin{itemize}
    \item[(i)] for all $p\in \mathbb{P}^+_n$, we have $\phi_n(p,0)=p$;
      \item[(ii)] for all $p\in \mathbb{P}^+_n$, we have $\phi_n(p,1)\in \mathbb{P}^+_{n+1}$;
 \item[(iii)] for all $p\in \mathbb{P}^+_{n+1}$ and all $t\in [0,1]$, we have $\phi_n(p,t)=p$;
 \item[(iv)] for all $p\in \mathbb{P}^+_n\setminus \mathbb{P}^+_{n+1}$ and all $t\in[0,1]$, we have that $\phi_n(p,t)$ lies in the image of $P_n$ in the quotient $\mathbb{P}^+_n$.
    \end{itemize}
Put $\mathbb{H}_{n,k}=\bigcup_{i=n}^k C_i$ (where $\mathbb{H}_{n,k}=\emptyset$ for $k<n$) and $\mathbb{H}_{n,k}^+=\bigcup_{i=1}^{n-1}(B_{i,1}\cup B_{i,2})\cup \mathbb{H}_{n,k}$. Likewise, put $\mathbb{H}_n^+=\bigcup_{i=1}^{n-1}(B_{i,1}\cup B_{i,2})\cup \mathbb{H}_n$. Note that $\mathbb{H}^+_{n,n-1}= \bigcup_{i=1}^{n-1}(B_{i,1}\cup B_{i,2})$ and  $\mathbb{H}_1^+=\mathbb{H}$.

 Defining $d_n(p)=\phi_n(p,1)$ and letting $r_{n,k}:\mathbb{H}^+_{n}\rightarrow \mathbb{H}^+_{n,k}$ denote the canonical retractions with $r_{n,k}(\bigcup_{i=k+1}^\infty C_i)=\{b_0\}$, we have the following commutative diagrams:

\[
\xymatrix{\mathbb{P}^+_1 \ar[r]^{d_1} & \mathbb{P}^+_2 \ar[r]^{d_2} & \cdots \ar[r]^{d_{n-1}} & \mathbb{P}^+_n \ar[r]^{d_n} & \mathbb{P}^+_{n+1} \ar[r]^{d_{n+1}} & \cdots\\
\mathbb{H}^+_1 \ar[r]^{d_1} \ar@{^{(}->}[u] \ar[d]^{r_{1,k}} & \mathbb{H}^+_2 \ar[r]^{d_2}  \ar@{^{(}->}[u] \ar[d]^{r_{2,k}} & \cdots \ar[r]^{d_{n-1}}   & \mathbb{H}^+_n \ar[r]^{d_n} \ar@{^{(}->}[u] \ar[d]^{r_{n,k}} & \mathbb{H}^+_{n+1} \ar[r]^{d_{n+1}} \ar@{^{(}->}[u] \ar[d]^{r_{n+1,k}} & \cdots\\
\mathbb{H}^+_{1,k} \ar[r]^{d_1} & \mathbb{H}^+_{2,k} \ar[r]^{d_2}  & \cdots \ar[r]^{d_{n-1}} & \mathbb{H}^+_{n,k} \ar[r]^{d_n} & \mathbb{H}^+_{n+1,k} &  (k\geqslant 2n+1)
}
\]

\vspace{10pt}

We may assume that there are parametrizations $\rho_{n,j}:([0,1],\{0,1\})\rightarrow (B_{n,j},b_0)$ such that \begin{equation}\label{peel}
d_n\circ \ell_n=\rho_{n,1} \cdot \ell_{2n} \cdot \rho_{n,1}^- \cdot \rho_{n,2}\cdot \ell_{2n+1}\cdot \rho_{n,2}^-.
\end{equation}
Here, ``$\;\cdot\;$'' denotes the usual concatenation of paths and $\rho_{n,j}^-$ denotes
the reverse path of $\rho_{n,j}$, given by $\rho_{n,j}^-(t)=\rho_{n,j}(1-t)$. Taking path homotopy classes, and noting that $[\ell_n]=[d_n\circ \ell_n]\in \pi_1(\mathbb{P},b_0)$, we have
\begin{equation}\label{relation}
[\ell_n]=[\rho_{n,1}] [ \ell_{2n}] [\rho_{n,1}]^{-1}[ \rho_{n,2}][\ell_{2n+1}][\rho_{n,2}]^{-1}\in \pi_1(\mathbb{P},b_0).
\end{equation}

\begin{lemma}\label{dn-inj}
  For every $k\geqslant 2n+1$, $d_{n\#}:\pi_1(\mathbb{H}_{n,k}^+,b_0)\rightarrow \pi_1(\mathbb{H}_{n+1,k}^+,b_0)$ is injective.
\end{lemma}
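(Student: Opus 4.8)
The plan is to reduce the statement to a concrete fact about free groups. Since $\mathbb{H}_{n,k}^+$ is the union of the finitely many circles $B_{i,j}$ ($1\leqslant i\leqslant n-1$, $j=1,2$) and $C_i$ ($n\leqslant i\leqslant k$), and these meet pairwise only at $b_0$ — indeed $B_{i,j}\setminus\{b_0\}$ lies in the interior $P_i^\circ$, which is disjoint from $\mathbb{H}$ and from $P_{i'}^\circ$ for $i\neq i'$ — the space $\mathbb{H}_{n,k}^+$ is a finite wedge of circles, so $\pi_1(\mathbb{H}_{n,k}^+,b_0)$ is free with basis $\{[\rho_{i,j}]:i\leqslant n-1\}\cup\{[\ell_i]:n\leqslant i\leqslant k\}$; likewise $\pi_1(\mathbb{H}_{n+1,k}^+,b_0)$ is free with basis $\{[\rho_{i,j}]:i\leqslant n\}\cup\{[\ell_i]:n+1\leqslant i\leqslant k\}$. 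Because $k\geqslant 2n+1$, both $[\ell_{2n}]$ and $[\ell_{2n+1}]$ belong to the second basis, and properties (i)--(iv) of $\phi_n$ together with \eqref{peel} show that $d_{n\#}$ is the homomorphism determined on the first basis by $[\rho_{i,j}]\mapsto[\rho_{i,j}]$ for $i\leqslant n-1$, $[\ell_i]\mapsto[\ell_i]$ for $n+1\leqslant i\leqslant k$, and $[\ell_n]\mapsto w$, where $w=[\rho_{n,1}][\ell_{2n}][\rho_{n,1}]^{-1}[\rho_{n,2}][\ell_{2n+1}][\rho_{n,2}]^{-1}$.

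To prove this homomorphism injective I would argue with reduced words. Let $\xi\neq 1$ and write $\xi=s_1s_2\cdots s_m$ as a reduced word in the first basis and its inverses. Applying $d_{n\#}$ leaves every letter $s_r\notin\{[\ell_n]^{\pm1}\}$ unchanged and replaces each $s_r=[\ell_n]^{\pm1}$ by the reduced length-six word $w^{\pm1}$; call the resulting word $W$. I claim $W$ is already reduced. Since each block $w^{\pm1}$ is itself reduced and every other block is a single letter, the only possible cancellation is at a seam between the blocks of a pair $s_r,s_{r+1}$ with $s_r\neq s_{r+1}^{-1}$. Here one uses that the letters $[\rho_{n,1}]^{\pm1}$ and $[\rho_{n,2}]^{\pm1}$ do not occur in the first basis, and that $w$ begins with $[\rho_{n,1}]$ and ends with $[\rho_{n,2}]^{-1}$ while $w^{-1}$ begins with $[\rho_{n,2}]$ and ends with $[\rho_{n,1}]^{-1}$: running through the cases for $(s_r,s_{r+1})$ — both single letters; one single letter and one $[\ell_n]^{\pm1}$; or both $[\ell_n]^{\pm1}$ — one checks in each case that the last letter of the block of $s_r$ is not the inverse of the first letter of the block of $s_{r+1}$, the subcases $s_r=[\ell_n]^{\pm1}$, $s_{r+1}=[\ell_n]^{\mp1}$ being excluded by reducedness of $\xi$. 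Hence $W$ is a nonempty reduced word, so $d_{n\#}(\xi)=W\neq 1$.

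The seam analysis in the second paragraph is the only point that requires care; everything else is bookkeeping, and the hypothesis $k\geqslant 2n+1$ enters precisely to make $[\ell_{2n}]$ and $[\ell_{2n+1}]$ available in the target. The same conclusion can be packaged as a Stallings-fold argument: the graph obtained by wedging at $b_0$ one edge-loop for each element of $\{[\rho_{i,j}]:i\leqslant n-1\}\cup\{[\ell_i]:n+1\leqslant i\leqslant k\}$ and one hexagonal loop reading $w$ is already folded — no two edges incident to a common vertex carry the same label — hence immerses in the rose presenting $\pi_1(\mathbb{H}_{n+1,k}^+,b_0)$; as immersions of graphs are $\pi_1$-injective and this graph has fundamental group free on those loops, the image of $d_{n\#}$ is free on the images of the first basis, so $d_{n\#}$ is injective. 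I would present the reduced-word version, since it is self-contained.
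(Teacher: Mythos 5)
Your proof is correct and takes essentially the same route as the paper: identify $\pi_1(\mathbb{H}_{n,k}^+,b_0)$ and $\pi_1(\mathbb{H}_{n+1,k}^+,b_0)$ as free groups on the wedge-of-circles bases and observe that $d_{n\#}$ fixes every basis element except $[\ell_n]$, which it sends to $[\rho_{n,1}][\ell_{2n}][\rho_{n,1}]^{-1}[\rho_{n,2}][\ell_{2n+1}][\rho_{n,2}]^{-1}$ by the factorization of $d_n\circ\ell_n$. The paper states that injectivity then "follows," whereas you carry out the remaining reduced-word seam analysis explicitly (and correctly), so you have merely supplied the detail the paper leaves implicit.
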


\begin{proof}
For $m\in \{n,n+1\}$, the group $\pi_1(\mathbb{H}_{m,k}^+,b_0)$ is free on the set \[\{[\rho_{i,j}]\mid 1\leqslant i \leqslant m-1, 1\leqslant j \leqslant 2\}\cup\{[\ell_i]\mid m\leqslant i \leqslant k\}.\]
So, the claim  follows from Equation~(\ref{peel}) and the fact that $d_n\circ \rho_{i,j}=\rho_{i,j}$ for  $1\leqslant i \leqslant n-1$, $1\leqslant j \leqslant 2$ and $d_n\circ \ell_i=\ell_i$ for $n+1\leqslant i\leqslant k$.
\end{proof}

\begin{Notation}
 We will denote functions $\sigma:\mathcal{L}\rightarrow \mathcal{N}$ to inverse limits
\begin{eqnarray*} \displaystyle \mathcal{N} &=& \lim_{\longleftarrow} \left(\mathcal{N}_1\stackrel{\kappa_1}{\longleftarrow} \mathcal{N}_2\stackrel{\kappa_2}{\longleftarrow} \mathcal{N}_3\stackrel{\kappa_3}{\longleftarrow}\cdots\right) \\ &=&\left\{(x_n)_{n\geqslant 1}\in \prod_{n=1}^\infty \mathcal{N}_n\mid \kappa_n(x_{n+1})=x_n \mbox{ for all }n\geqslant 1\right\}
\end{eqnarray*}
 as sequences $\sigma=(\sigma_n)_{n\geqslant 1}$ with $\sigma_n=\mu_n\circ \sigma:{\mathcal L}\rightarrow {\mathcal N}_n$, where $\mu_n:\mathcal{N}\rightarrow \mathcal{N}_n$ are the projections.
\end{Notation}

\begin{lemma}\label{dn-com-inj}
For every $n$,  $d_{n\#}:\pi_1(\mathbb{H}_{n}^+,b_0)\rightarrow \pi_1(\mathbb{H}_{n+1}^+,b_0)$ is injective.
\end{lemma}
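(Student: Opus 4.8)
The plan is to bootstrap from the finite-stage injectivity of Lemma~\ref{dn-inj} to the infinite-stage statement, using the classical fact that $\pi_1$ of a one-dimensional earring-type space loses no information when one passes to its finite subwedges.

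First I would record the key input: the homomorphism
\[
\psi_n\colon \pi_1(\mathbb{H}_n^+,b_0)\longrightarrow \varprojlim_k \pi_1(\mathbb{H}_{n,k}^+,b_0),
\]
whose $k$-th coordinate is $r_{n,k\#}$ and whose bonding maps are induced by the obvious retractions $\mathbb{H}_{n,k+1}^+\rightarrow \mathbb{H}_{n,k}^+$, is injective. This is the assertion that $\pi_1$ of the Hawaiian earring embeds into the inverse limit of the free groups of finite rank obtained by retracting away all but finitely many circles: $\mathbb{H}_n^+$ is one-dimensional, and a brief point-set check shows that the retractions $r_{n,k}$ present it as $\varprojlim_k \mathbb{H}_{n,k}^+$ (the $r_{n,k}$ are compatible, they separate points, and $\mathbb{H}_n^+$ is compact), so $\psi_n$ is, up to that identification, the canonical homomorphism $\pi_1(\mathbb{H}_n^+,b_0)\rightarrow \check{\pi}_1(\mathbb{H}_n^+,b_0)$, which is injective for one-dimensional spaces~\cite{CF1959b,EK}. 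Equivalently, $\ker\psi_n=\bigcap_k\ker(r_{n,k\#})$ is trivial.

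Then I would run the diagram chase. Let $x\in\pi_1(\mathbb{H}_n^+,b_0)$ with $d_{n\#}(x)=1$ in $\pi_1(\mathbb{H}_{n+1}^+,b_0)$, and fix $k\geqslant 2n+1$. Commutativity of the relevant square of the displayed diagram (horizontal maps $d_n$, vertical maps $r_{n,k}$ and $r_{n+1,k}$) gives
\[
d_{n\#}\bigl(r_{n,k\#}(x)\bigr)=r_{n+1,k\#}\bigl(d_{n\#}(x)\bigr)=1 \quad\text{in }\pi_1(\mathbb{H}_{n+1,k}^+,b_0),
\]
so Lemma~\ref{dn-inj}, the injectivity of $d_{n\#}\colon\pi_1(\mathbb{H}_{n,k}^+,b_0)\rightarrow\pi_1(\mathbb{H}_{n+1,k}^+,b_0)$, forces $r_{n,k\#}(x)=1$. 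Since the subgroups $\ker(r_{n,k\#})$ decrease with $k$, it follows that $x\in\bigcap_k\ker(r_{n,k\#})=\ker\psi_n$, hence $x=1$. Therefore $d_{n\#}\colon\pi_1(\mathbb{H}_n^+,b_0)\rightarrow\pi_1(\mathbb{H}_{n+1}^+,b_0)$ is injective.

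The only non-formal ingredient is the injectivity of $\psi_n$; everything else is the supplied commuting diagram together with Lemma~\ref{dn-inj}. I expect that to be the main obstacle in a self-contained treatment, but it is a well-documented phenomenon for one-dimensional spaces, so the genuinely new content is already carried by Lemma~\ref{dn-inj}.
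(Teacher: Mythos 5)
Your argument is correct and is essentially the paper's proof: both rest on the injectivity of $((r_{n,k})_\#)_k\colon\pi_1(\mathbb{H}_n^+,b_0)\to\varprojlim_k\pi_1(\mathbb{H}_{n,k}^+,b_0)$ (the paper cites the combinatorial structure of the Hawaiian earring group, you invoke one-dimensional shape injectivity -- the same fact in this setting), combined with Lemma~\ref{dn-inj} and the commuting square with the retractions $r_{n,k}$, $r_{n+1,k}$. You merely run the chase in contrapositive form (all $r_{n,k\#}(x)=1$ forces $x=1$) where the paper picks one $k$ with $r_{n,k\#}([\alpha])\neq 1$; the content is identical.
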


\begin{proof}
  Let $1\not=[\alpha]\in \pi_1(\mathbb{H}_n^+,b_0)$. Since \[ \mathbb{H}^+_n=\lim_{\longleftarrow} \left( \mathbb{H}^+_{n,n-1} \stackrel{r_{n,n-1}}{\longleftarrow} \mathbb{H}^+_{n,n} \stackrel{r_{n,n}}{\longleftarrow} \mathbb{H}^+_{n,n+1} \stackrel{r_{n,n+1}}{\longleftarrow} \cdots \right)\approx \mathbb{H},\]
  we have that \[((r_{n,k})_\#)_{k\geqslant n-1}:\pi_1(\mathbb{H}_n^+,b_0)\rightarrow \lim_{\longleftarrow}\left( \pi_1(\mathbb{H}_{n,n-1}^+,b_0) \stackrel{(r_{n,n-1})_\#}{\longleftarrow}  \pi_1(\mathbb{H}_{n,n}^+,b_0) \stackrel{(r_{n,n})_\#}{\longleftarrow}
     \cdots\right)\] is injective \cite{CC2000}.
Therefore,  there is a $k\geqslant 2n+1$ such that $(r_{n,k})_\#([\alpha])\not=1$.
  The claim now follows from Lemma~\ref{dn-inj} and applying $\pi_1$ to the diagram above.
\end{proof}

\begin{lemma}\label{direct} $\pi_1(\mathbb{P},b_0)$ is isomorphic to the direct limit \[\lim_{\longrightarrow} \left(  \pi_1(\mathbb{H}_{1}^+,b_0)\stackrel{d_{1\#}}{\longrightarrow} \pi_1(\mathbb{H}_{2}^+,b_0) \stackrel{d_{2\#}}{\longrightarrow} \pi_1(\mathbb{H}_{3}^+,b_0) \stackrel{d_{3\#}}{\longrightarrow} \cdots \right)\]
with canonical homomorphisms $\iota_{n\#}:\pi_1(\mathbb{H}_n^+,b_0)\rightarrow \pi_1(\mathbb{P},b_0)$ induced by inclusion $\iota_n:\mathbb{H}_n^+\hookrightarrow \mathbb{P}$.
\end{lemma}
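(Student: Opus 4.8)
The plan is to identify $\mathbb{P}$ as a telescope-like colimit of the spaces $\mathbb{H}_n^+$ along the maps $d_n$, and then transfer this to fundamental groups using the deformation retractions $\phi_n$. First I would make precise the sense in which $\mathbb{P}$ is built from the $\mathbb{H}_n^+$: the homotopies $\phi_n$ show that the inclusion $\iota_{n+1\#}\circ d_{n\#} = \iota_{n\#}$ holds up to the relation $[\ell_n] = [d_n\circ\ell_n]$ recorded in Equation~(\ref{peel}) (property (iii) of $\phi_n$ guarantees $d_n$ is the identity on $\mathbb{H}_{n+1}^+$, so the square commutes). This gives a well-defined homomorphism $\Phi$ from the direct limit $\varinjlim \pi_1(\mathbb{H}_n^+,b_0)$ to $\pi_1(\mathbb{P},b_0)$, and by Lemma~\ref{dn-com-inj} all the bonding maps $d_{n\#}$ are injective, so each $\iota_{n\#}$ is injective on the image and the limit group injects canonically.

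Next I would prove surjectivity of $\Phi$. Given a loop $\omega:([0,1],\{0,1\})\to(\mathbb{P},b_0)$, I want to show $[\omega]$ lies in the image of some $\iota_{n\#}$. The idea is to push $\omega$ inward using the composite deformation retractions: for each $n$, the map $D_n := d_n\circ d_{n-1}\circ\cdots\circ d_1:\mathbb{P}\to\mathbb{P}^+_{n+1}$ is homotopic to the identity through the concatenated homotopies $\phi_1,\dots,\phi_n$, and is the identity on $\mathbb{P}^+_{n+1}$. So $[\omega]=[D_n\circ\omega]$ for every $n$. The point is that the image of $\omega$, being compact, can be pushed off the outer pants; more carefully, I would argue that for $n$ large enough, $D_n\circ\omega$ has image inside $\mathbb{H}_{n+1}^+$ — or, if that is too strong because of the nontrivial $\rho$-arcs, inside some $\mathbb{P}^+_m$ whose $\pi_1$ is captured by $\iota_{m\#}$ of a finitely-iterated image. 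The cleanest route is to observe that $\mathbb{P}^+_n$ deformation retracts onto $\mathbb{H}_n^+$ (pants retract to their boundary-plus-arcs frame, iterated and passed to a limit via properties (i)--(iv)), so $\iota_{n\#}:\pi_1(\mathbb{H}_n^+,b_0)\to\pi_1(\mathbb{P}^+_n,b_0)$ is an isomorphism, and then the nested sequence $\mathbb{P}=\mathbb{P}^+_1\supseteq\mathbb{P}^+_2\supseteq\cdots$ together with the retractions realizes $\pi_1(\mathbb{P},b_0)$ as the colimit.

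The main obstacle is surjectivity — specifically, showing that an arbitrary loop's class eventually lands in $\bigcup_n \mathrm{im}(\iota_{n\#})$. The subtlety is that $\mathbb{P}$ is not a CW complex and the nested intersection $\bigcap_n\mathbb{P}^+_n$ is just the Hawaiian Earring-like core, so one cannot naively invoke compactness to confine the whole loop to one $\mathbb{P}^+_n$; instead one must use that $[D_n\circ\omega]=[\omega]$ for all $n$ and a diagonal/continuity argument — the homotopies $\phi_n$ can be telescoped into a single homotopy $\Phi_\infty:\mathbb{P}\times[0,1)\to\mathbb{P}$ extending continuously (using property (iv), which localizes the action of $\phi_n$ to the $n$-th pants, so the infinite concatenation converges) to show $D_\infty:\mathbb{P}\to\bigcap_n\mathbb{P}^+_n$ is a deformation retraction onto the core. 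But then the core is $\mathbb{H}$-like rather than all of $\mathbb{P}$, which would be false; so the correct statement is rather that each \emph{loop} $\omega$, having compact image meeting only finitely many $P_n$ essentially, can after finitely many pushes be taken into $\mathbb{P}^+_n$ with its class lying in $\iota_{n\#}(\pi_1(\mathbb{H}_n^+))$. I would make this rigorous by showing $\pi_1(\mathbb{P}^+_n,b_0)\cong\pi_1(\mathbb{H}_n^+,b_0)$ via the limit of the retractions in properties (i)--(iv), so that $\pi_1(\mathbb{P},b_0)=\pi_1(\mathbb{P}^+_1,b_0)$ maps onto the colimit; combined with the injectivity from Lemma~\ref{dn-com-inj}, this yields the isomorphism with the stated direct limit and the description of the canonical maps $\iota_{n\#}$.
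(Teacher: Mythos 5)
Your core idea for surjectivity (push a loop forward by $d_{n-1}\circ\cdots\circ d_1$ after using compactness to see that it misses $P^\circ_i$ for all large $i$) is exactly the paper's mechanism, and your hedge about it being ``too strong because of the nontrivial $\rho$-arcs'' is unnecessary: if $\alpha([0,1])\cap P^\circ_i=\emptyset$ for all $i\geqslant n$, then property (iv) localizes each $d_i$ to the $i$-th pants, which it collapses onto $\partial D_{i,1}\cup A_{i,1}\cup A_{i,2}\cup\partial D_{i,2}$, so $d_{n-1}\circ\cdots\circ d_1\circ\alpha$ lands precisely in $\mathbb{H}_n^+$ and represents $[\alpha]$. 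The problem is the route you then declare ``cleanest'': the claim that $\mathbb{P}^+_n$ deformation retracts onto $\mathbb{H}_n^+$, hence $\pi_1(\mathbb{P}^+_n,b_0)\cong\pi_1(\mathbb{H}_n^+,b_0)$ via inclusion, is false. Retracting the pants $P_i$ with $i\geqslant n$ lands on frames containing the circles $B_{i,1}\cup B_{i,2}$, which are not in $\mathbb{H}_n^+$, and no such retraction can exist: $\iota_{n\#}$ is not surjective onto $\pi_1(\mathbb{P}^+_n,b_0)$ (for instance $[\rho_{n,1}]$ is not in its image, since under the word projections every occurrence of $\rho_{n,1}$ in the image comes paired inside a block $\rho_{n,1}\ell_{2n}\rho_{n,1}^{-1}$). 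In the extreme case $n=1$ your claim would say $\mathbb{P}$ deformation retracts onto $\mathbb{H}$, forcing $\pi_1(\mathbb{P},b_0)\cong\pi_1(\mathbb{H},b_0)$ to inject into the first \v{C}ech homotopy group, contradicting Theorem~\ref{thm:notinj}; indeed the whole point of the construction is that each stage contributes new conjugator classes. So the step you proposed to ``make rigorous'' cannot be made rigorous, and surjectivity must rest on the compactness-plus-pushing argument alone.

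The second gap is injectivity of the canonical map $\Phi$ from the colimit to $\pi_1(\mathbb{P},b_0)$. You assert that because the bonding maps $d_{n\#}$ are injective (Lemma~\ref{dn-com-inj}), ``each $\iota_{n\#}$ is injective \ldots and the limit group injects canonically.'' That is a non sequitur and, in this paper, circular: injectivity of $\iota_{n\#}$ is exactly Lemma~\ref{HP-inj}, which is deduced \emph{from} Lemma~\ref{direct} together with Lemma~\ref{dn-com-inj}. What is actually needed here is the statement that a path homotopy in $\mathbb{P}$ between loops in $\mathbb{H}_n^+$ can be pushed into some $\mathbb{H}_m^+$: a homotopy $H$ has compact image, so for $m$ large $d_{m-1}\circ\cdots\circ d_1\circ H$ has image in $\mathbb{H}_m^+$, whence the images of the two loops under the composite bonding maps are already homotopic in $\pi_1(\mathbb{H}_m^+,b_0)$. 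The paper packages both directions by verifying the universal property of the direct limit directly (compatibility $\iota_{n+1\#}\circ d_{n\#}=\iota_{n\#}$, which holds on the nose since $\phi_n$ is a basepoint-preserving homotopy from the identity to $d_n$ in $\mathbb{P}$, not merely ``up to'' Equation~(\ref{peel}); existence of the induced map by the loop-pushing argument; well-definedness by the homotopy-pushing argument), and notably never uses Lemma~\ref{dn-com-inj}. Your proposal is missing this homotopy-compactness step entirely, and without it the identification with the colimit is not established.
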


\begin{proof} For $n\in \mathbb{N}$ and $[\alpha]\in \pi_1(\mathbb{H}_n^+,b_0)$, we have $[d_n\circ \alpha]=[\alpha]\in \pi_1(\mathbb{P},b_0)$. Hence $\iota_{n+1\#}\circ d_{n\#}=\iota_{n\#}$. In order to verify the  universal property, let $h_n: \pi_1(\mathbb{H}_n^+,b_0)\rightarrow G$ be  a homomorphisms with $h_{n+1}\circ d_{n\#}=h_n$. Let $[\alpha]\in \pi_1(\mathbb{P},b_0)$. Since $\alpha([0,1])$ is compact, there is an $n\in \mathbb{N}$ such that $\alpha([0,1])\cap P^\circ_i=\emptyset$ for all $i\geqslant n$. Put $\beta=d_{n-1}\circ d_{n-2} \circ \cdots \circ d_1\circ\alpha$. Then $\beta:([0,1],\{0,1\})\rightarrow (\mathbb{H}_n^+,b_0)$ and $\iota_{n\#}([\beta])=[\alpha]\in \pi_1(\mathbb{P},b_0)$. Moreover,  $h_{n+1}([d_n\circ \beta])=h_n([\beta])$. Put $h([\alpha])=h_n([\beta])$. Once we show that $h:\pi_1(\mathbb{P},b_0)\rightarrow G$ is well-defined, it is clear that $h$ is a homomorphism with $h\circ \iota_{n\#}=h_n$ and that $h$ is the unique homomorphism with this property. To this end, suppose that $[\alpha]=[\widetilde{\alpha}]\in \pi_1(\mathbb{P},b_0)$.
Let $H:[0,1]\times [0,1]\rightarrow \mathbb{P}$ be a homotopy with $H(t,0)=\alpha(t)$, $H(t,1)=\widetilde{\alpha}(t)$, and $H(0,t)=H(1,t)=b_0$ for all $t\in [0,1]$. Since $H([0,1]\times [0,1])$ is compact, for $n\in\mathbb{N}$ sufficiently large, we have  $d_{n-1}\circ \cdots \circ d_2\circ d_1\circ H([0,1]\times [0,1])\subseteq \mathbb{H}_{n}^+$ so that $[d_{n-1}\circ \cdots \circ  d_2\circ d_1\circ \alpha]=[d_{n-1}\circ \cdots \circ  d_2\circ d_1\circ \widetilde{\alpha}]\in \pi_1(\mathbb{H}_{n}^+,b_0)$.
\end{proof}

\begin{lemma}\label{HP-inj}
For every $n$, $\iota_{n\#}:\pi_1(\mathbb{H}_n^+,b_0)\rightarrow \pi_1(\mathbb{P},b_0)$ is injective.
\end{lemma}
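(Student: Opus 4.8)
The plan is to read this off directly from Lemmas~\ref{dn-com-inj} and~\ref{direct}. By Lemma~\ref{direct}, $\pi_1(\mathbb{P},b_0)$ is the direct limit of the sequence $\pi_1(\mathbb{H}_1^+,b_0)\stackrel{d_{1\#}}{\to}\pi_1(\mathbb{H}_2^+,b_0)\stackrel{d_{2\#}}{\to}\cdots$, with $\iota_{n\#}$ serving as the canonical homomorphism out of the $n$-th stage. For a directed system of groups indexed by $\mathbb{N}$, an element $[\alpha]$ of the $n$-th group is sent to the identity of the colimit if and only if some finite composite of bonding homomorphisms already sends it to the identity; that is, $\iota_{n\#}([\alpha])=1$ forces $(d_{m-1\#}\circ\cdots\circ d_{n\#})([\alpha])=1$ in $\pi_1(\mathbb{H}_m^+,b_0)$ for some $m\geqslant n$. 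First I would record this description of $\ker\iota_{n\#}$, either by citing it as the standard characterization of vanishing in a direct limit or by re-deriving it from the explicit construction of the colimit used in the proof of Lemma~\ref{direct}.

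Then I would invoke Lemma~\ref{dn-com-inj}: each $d_{i\#}$ is injective, hence so is the composite $d_{m-1\#}\circ\cdots\circ d_{n\#}$, and therefore $[\alpha]=1$ in $\pi_1(\mathbb{H}_n^+,b_0)$. This shows $\ker\iota_{n\#}$ is trivial, which is exactly the assertion.

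If one prefers to avoid the abstract colimit language, the same conclusion follows by rerunning the compactness argument from the proof of Lemma~\ref{direct} in the special case at hand: given a loop $\alpha$ in $\mathbb{H}_n^+$ with $[\alpha]=1\in\pi_1(\mathbb{P},b_0)$, pick a null-homotopy $H$ of $\alpha$ in $\mathbb{P}$; since $H([0,1]\times[0,1])$ is compact, for $m$ sufficiently large $d_{m-1}\circ\cdots\circ d_1\circ H$ maps into $\mathbb{H}_m^+$, and because $d_i$ restricts to the identity on $\mathbb{H}_n^+$ for $i\leqslant n-1$, this exhibits $(d_{m-1\#}\circ\cdots\circ d_{n\#})([\alpha])=1\in\pi_1(\mathbb{H}_m^+,b_0)$; injectivity of the bonding maps then gives $[\alpha]=1$.

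I do not anticipate a genuine obstacle here: all the substantive work is already contained in Lemmas~\ref{dn-inj}, \ref{dn-com-inj}, and~\ref{direct}. The only point that calls for a modicum of care is making the passage from ``$[\alpha]$ dies in the colimit'' to ``$[\alpha]$ dies at some finite stage'' fully rigorous, which is routine given the concrete model of $\pi_1(\mathbb{P},b_0)$ supplied by Lemma~\ref{direct}.
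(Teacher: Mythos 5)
Your proposal is correct and is essentially the paper's own argument: the paper proves this lemma by simply citing Lemmas~\ref{dn-com-inj} and~\ref{direct}, and the details you supply (an element of a direct limit over $\mathbb{N}$ vanishes iff it vanishes at some finite stage, plus injectivity of the bonding maps) are exactly the routine filling-in that proof leaves implicit.
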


\begin{proof} This follows from Lemmas~\ref{dn-com-inj} and \ref{direct}.
\end{proof}

Recall that a group $G$ is called {\em locally free} if every finitely generated subgroup of $G$ is free.

\begin{proposition}\label{lf}
$\pi_1(\mathbb{P},b_0)$ is locally free.
\end{proposition}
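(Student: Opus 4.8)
The plan is to deduce local freeness of $\pi_1(\mathbb{P},b_0)$ directly from the direct-limit description in Lemma~\ref{direct} together with the injectivity of the bonding homomorphisms in Lemma~\ref{dn-com-inj} (equivalently Lemma~\ref{HP-inj}). The key observation is that each $\pi_1(\mathbb{H}_n^+,b_0)$ is the fundamental group of a one-dimensional compact path-connected metric space, hence locally free by the classical result of Curtis--Fort \cite{CF1959b} already cited in the introduction; alternatively, $\mathbb{H}_n^+$ is homotopy equivalent to a Hawaiian-Earring-type space whose fundamental group is well known to be locally free. Either way, I would record this as the first step: every group in the direct system is locally free.

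Second, I would invoke the general (purely algebraic) fact that a \emph{directed union} of locally free groups along injective homomorphisms is locally free. Concretely, let $F\leqslant \pi_1(\mathbb{P},b_0)$ be finitely generated, say generated by $g_1,\dots,g_m$. By Lemma~\ref{direct} the canonical maps $\iota_{n\#}:\pi_1(\mathbb{H}_n^+,b_0)\to\pi_1(\mathbb{P},b_0)$ exhaust $\pi_1(\mathbb{P},b_0)$, and for each $j$ there is an $n_j$ with $g_j\in\operatorname{im}\iota_{n_j\#}$; since the system is directed, choosing $N=\max_j n_j$ and using $\iota_{n+1\#}\circ d_{n\#}=\iota_{n\#}$, all of $g_1,\dots,g_m$ lie in $\operatorname{im}\iota_{N\#}$, so $F\leqslant \operatorname{im}\iota_{N\#}$. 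By Lemma~\ref{HP-inj}, $\iota_{N\#}$ is injective, so $F$ is isomorphic to a finitely generated subgroup of $\pi_1(\mathbb{H}_N^+,b_0)$, which is free by Step~1. Hence $F$ is free, and $\pi_1(\mathbb{P},b_0)$ is locally free.

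I do not anticipate a genuine obstacle here; the argument is a short bookkeeping exercise once Lemmas~\ref{direct} and \ref{HP-inj} are in hand. The only point requiring a little care is the first step — making sure one is entitled to call $\pi_1(\mathbb{H}_n^+,b_0)$ locally free. Since $\mathbb{H}_n^+$ is a one-dimensional compact (indeed planar) Peano continuum, this is immediate from \cite{CF1959b} (or \cite{CC}), and I would simply cite that rather than reprove it. It is worth noting that injectivity of the bonding maps, rather than just the existence of the direct limit, is essential: without it the colimit of free groups need not be locally free.
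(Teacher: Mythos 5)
Your proof is correct and follows essentially the paper's route: both arguments rest on Lemma~\ref{direct} together with local freeness of each $\pi_1(\mathbb{H}_n^+,b_0)$ --- the paper obtains the latter from \cite[Theorem~1]{CF1959b} by viewing $\pi_1(\mathbb{H}_n^+,b_0)$ as a subgroup of an inverse limit of finite-rank free groups, which is the same Curtis--Fort source you invoke via one-dimensionality of $\mathbb{H}_n^+\approx\mathbb{H}$. The only divergence is the second step: the paper simply cites \cite[Lemma~24]{CHM}, to the effect that a direct limit of locally free groups is locally free, whereas you re-prove the case at hand by treating the limit as a directed union, using injectivity of $\iota_{N\#}$ from Lemma~\ref{HP-inj}; that is a sound, self-contained alternative. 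One correction, though: your closing remark that injectivity of the bonding maps is essential is not true. A directed colimit of locally free groups is locally free with no injectivity hypothesis (this is exactly what the cited lemma of Conner--Hojka--Meilstrup provides): given a finitely generated subgroup of the colimit, lift its $m$ generators to some term of the system and follow the subgroups they generate along the bonding maps; these are free of rank at most $m$, the induced maps between them are surjective, so the ranks eventually stabilize, and a surjection between free groups of the same finite rank is an isomorphism by Hopficity, whence the subgroup is free. So the injectivity you use is a convenience in your write-up, not a necessity for the conclusion.
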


\begin{proof}
Since $\pi_1(\mathbb{H}^+_n,b_0)$ is isomorphic to a subgroup of an inverse limit of free groups of finite rank, it is locally free \cite[Theorem~1]{CF1959b}.
Then, by Lemma~\ref{direct}, $\pi_1(\mathbb{P},b_0)$ is a direct limit of locally free groups, and thus locally free \cite[Lemma~24]{CHM}.
\end{proof}

\begin{remark}[Metric Hawaiian Pants $\mathbb{P}^\ast \subseteq \mathbb{R}^3$]
While $\mathbb{P}$ is not metrizable (it is not first countable at $b_0$), it is naturally homotopy equivalent to the metrizable space formed by attaching  the pants $P_n$ to the union $\bigcup_{i=1}^\infty Z_i\subseteq \mathbb{R}^3$ of the cylinders $Z_n=C_n\times[1-n,n-1]$ via identifying $\partial D_n$ with $C_n \times \{1-n\}$, $\partial D_{n,1}$ with $C_{2n} \times \{2n-1\}$, and $\partial D_{n,2}$ with $C_{2n+1} \times \{2n\}$.

If we change the embedding $\mathbb{H}\subseteq \mathbb{R}^2$, this procedure yields a subspace of $\mathbb{R}^3$ that is homotopy equivalent to $\mathbb{P}$: For each $n\in \mathbb{N}$, choose a triangle $C'_n\subseteq \mathbb{R}^2$ of diameter less than $1/n$ such that for all $i\not=j$, $C'_i\cap C'_j=\{b_0\}$ and the bounded components of $\mathbb{R}^2\setminus C'_i$ and $\mathbb{R}^2\setminus C'_j$ are disjoint. Then $\bigcup_{i=1}^\infty C'_i \subseteq \mathbb{R}^2$ is homeomorphic to $\mathbb{H}$. The adjunction space resulting from attaching the pants $P_n$ to the union $\bigcup_{i=1}^\infty Z_i'\subseteq \mathbb{R}^3$ of the corresponding cylinders $Z_n'$  can now readily be implemented in $\mathbb{R}^3$ by forming the union of $\bigcup_{i=1}^\infty Z_i'$ with appropriate sets $P_n'\subseteq \mathbb{R}^3$ that are homeomorphic to $P_n$.

To obtain a subspace $\mathbb{P}^\ast\subseteq\mathbb{R}^3$ which is semilocally simply connected at all but one point and homotopy equivalent to $\mathbb{P}$, slightly deform the cylinders $Z_n'$  so that their only common point of contact is the origin.
Then there is a bijective homotopy equivalence $h:\mathbb{P}\rightarrow \mathbb{P}^\ast$ such that $h(C_n)=C_n'\times\{0\}$ for all $n\in \mathbb{N}$ with homotopy inverse $g:\mathbb{P}^\ast\rightarrow \mathbb{P}$ collapsing the cylinders such that $g|_{P'_n}: P_n'\rightarrow P_n$ is a homeomorphism, $g(Z_n')= C_n$, $g|_{C_n'\times\{0\}}=(h|_{C_n})^{-1}$, and $g(h(B_{n,j}))=B_{n,j}$ for all $n\in \mathbb{N}$ and $j\in \{1,2\}$.
The resulting deformation retraction of the cylinders allows the proofs in this paper for those properties of $\mathbb{P}$ that are not homotopy invariant (such as the 1-UV$_0$ property and the discrete monodromy property) to go through for $\mathbb{P}^\ast$ with only minor changes. However,  working with $\mathbb{P}$ is conceptually simpler.
\end{remark}

\section{Non-injectivity into the first \v{C}ech homotopy group}\label{sec:notinj}

Let $X$ be a path-connected topological space and $x_0\in X$. For a point $x\in X$, let ${\mathcal T}_x$ denote the set of all open neighborhoods of $x$ in $X$. For $U\in {\mathcal T}_x$ and a path $\alpha$ in $X$ from $x_0$ to $x$, consider the subgroup \[\pi(\alpha,U)=\{[\alpha\cdot \delta\cdot \alpha^-]\mid \delta \mbox{ a loop in } U\}\leqslant \pi_1(X,x_0).\]

Let $\pi(x,U)$ denote the normal closure of $\pi(\alpha,U)$ in $\pi_1(X,x_0)$, i.e., the subgroup  generated by all  $\pi(\beta,U)$, with $\beta$ a path from $x_0$ to $x$:
\[\pi(x,U)=\left<\pi(\alpha,U)\mid \alpha(1)=x\right> \leqslant \pi_1(X,x_0).\]

\begin{remark}\label{holes} Suppose $U\in {\mathcal T}_x$ is path connected and let $[\gamma]\in \pi_1(X,x_0)$. Then $[\gamma]\in \pi(x,U)$ if and only if there is a map $g:D\setminus(D^\circ_1\cup D^\circ_2\cup \cdots\cup D^\circ_j)\rightarrow X$ from a ``disk with holes'' to $X$ with $g|_{\partial D}=\gamma$ and $g(\partial D_i)\subseteq U$  for all $i\in\{1,2,\dots,j\}$. (Here, $D_1, D_2, \dots, D_j$ are pairwise disjoint disks in the interior $D^\circ$ of the disk $D$.)
\end{remark}

Let $Cov(X)$ denote the set of all open covers of $X$. For ${\mathcal U}\in Cov(X)$, let  $\pi({\mathcal U},x_0)$ denote the subgroup of $\pi_1(X,x_0)$ generated by all $\pi(x,U)$ with $U\in {\mathcal U}$ and $x\in U$.

\begin{definition}[Spanier group \cite{Spanier}]
The subgroup \[\pi^s(X,x_0)=\bigcap_{{\mathcal U}\in Cov(X)} \pi({\mathcal U},x_0)=\bigcap_{{\mathcal U}\in Cov(X)} \left<\pi(x,U)\mid x\in U\in {\mathcal U}\right>\]
of the fundamental group $\pi_1(X,x_0)$ is called the {\em Spanier group} of $X$.
\end{definition}

\begin{remark}\label{classical} If $X$ is locally path connected, then  for a given  $H\leqslant \pi_1(X,x_0)$, there is a (classical) covering projection $p:(\widetilde{X},\widetilde{x})\rightarrow (X,x_0)$ with $p_\#\pi_1(\widetilde{X},\widetilde{x})=H$ if and only if there is a ${\mathcal U}\in Cov(X)$ such that $\pi({\mathcal U},x_0)\leqslant H$ \cite{Spanier}.
\end{remark}

\begin{remark}\label{kernel}
The Spanier group $\pi^s(X,x_0)$ is contained in the kernel of the natural homomorphism $\Psi_X:\pi_1(X,x_0)\rightarrow \check{\pi}_1(X,x_0)$ to the first \v{C}ech homotopy group \cite{FZ2007}. If $X$ is locally path connected and metrizable, then $\pi^s(X,x_0)$ equals this kernel \cite{BFa2}.
\end{remark}

Let $\iota=\iota_1:\mathbb{H}\hookrightarrow \mathbb{P}$ denote inclusion.

\begin{lemma}\label{intoS}  $\iota_\#\pi_1(\mathbb{H},b_0)\leqslant \pi^s(\mathbb{P},b_0)$.
\end{lemma}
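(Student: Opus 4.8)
The plan is to show that every element of $\iota_\#\pi_1(\mathbb{H},b_0)$ lies in $\pi(\mathcal{U},b_0)$ for every open cover $\mathcal{U}$ of $\mathbb{P}$, which by definition of the Spanier group gives the inclusion. Since $\pi_1(\mathbb{H},b_0)$ is generated (as a ``big'' group allowing infinite concatenations, but it suffices to handle each generator-type contribution and in fact it is enough to work with the homomorphic image) by the classes $[\ell_n]$, and since $\pi(\mathcal{U},b_0)$ is a subgroup, the crux is to show $\iota_\#[\ell_n]\in\pi(\mathcal{U},b_0)$ for each $n$ — but here one must be careful: elements of $\pi_1(\mathbb{H},b_0)$ are not finite products of the $[\ell_n]$, so I would instead argue directly using Remark~\ref{holes}. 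Namely, fix $\mathcal{U}\in Cov(\mathbb{P})$ and fix $[\gamma]\in\iota_\#\pi_1(\mathbb{H},b_0)$, represented by a loop $\gamma$ in $\mathbb{H}$.

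The key geometric input is the pants structure together with the ``peeling'' relation~(\ref{peel}): attaching $P_n$ to $\mathbb{H}$ means that the loop $\ell_n$ in $\mathbb{H}\subseteq\mathbb{P}$ bounds, up to the lassos $\rho_{n,1},\rho_{n,2}$, a pair of pants whose two inner boundary circles are $\ell_{2n}$ and $\ell_{2n+1}$, i.e., loops that are ``deeper'' in the Hawaiian Earring and hence eventually small. Concretely, I would first choose, using path-connectedness and local path-connectedness of $\mathbb{P}$, for the basepoint $b_0$ a member $U_0\in\mathcal{U}$ with $b_0\in U_0$ and a path-connected open $V\subseteq U_0$ containing $b_0$. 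Since $V$ is a neighborhood of $b_0$ in $\mathbb{P}$, all but finitely many circles $C_n$ lie entirely inside $V$ (by the topology of $\mathbb{H}$ at $b_0$ and the fact that $\mathbb{P}$ retracts appropriately), so there is an $N$ with $C_n\subseteq V$ for all $n\ge N$. Now given $\gamma$ a loop in $\mathbb{H}$, I split it: write $[\gamma]=[\gamma']\cdot[\gamma'']$ where $\gamma'$ is the ``coarse part'' lying in $\mathbb{H}_{1,N-1}=C_1\cup\dots\cup C_{N-1}$ and $\gamma''$ lies in $\mathbb{H}_N$ — more precisely, use the retraction $r_{1,N-1}:\mathbb{H}\to\mathbb{H}_{1,N-1}$, whose ``error'' is a loop in $\mathbb{H}_N\subseteq V\subseteq U_0$, hence already in $\pi(b_0,U_0)\le\pi(\mathcal{U},b_0)$. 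So it remains to show $\iota_\#[\ell_n]\in\pi(\mathcal{U},b_0)$ for each fixed $n<N$, i.e., for finitely many $n$.

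For such a fixed $n$, I apply the relation~(\ref{relation}): $[\ell_n]=[\rho_{n,1}][\ell_{2n}][\rho_{n,1}]^{-1}[\rho_{n,2}][\ell_{2n+1}][\rho_{n,2}]^{-1}$ in $\pi_1(\mathbb{P},b_0)$. Iterating — replacing $[\ell_{2n}]$ and $[\ell_{2n+1}]$ by their own pants relations, and so on — expresses $[\ell_n]$ as a product of conjugates of classes $[\ell_m]$ with $m$ arbitrarily large. Since $[\ell_m]\in\pi(b_0,U_0)$ once $C_m\subseteq V$ (namely $m\ge N$), and $\pi(b_0,U_0)$ is a \emph{normal} subgroup, every conjugate $[\beta][\ell_m][\beta]^{-1}$ with $m\ge N$ lies in $\pi(b_0,U_0)\le\pi(\mathcal{U},b_0)$. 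Thus after finitely many iterations of the peeling relation (enough to push every surviving index $m$ up to at least $N$), $[\ell_n]$ is written as a finite product of elements of $\pi(\mathcal{U},b_0)$, hence lies in $\pi(\mathcal{U},b_0)$. The cleanest way to package this is via Remark~\ref{holes} directly: the composite of the deformation retractions $d_{N-1}\circ\cdots\circ d_n$ restricted to (the image of) $C_n$ exhibits a map from a disk-with-holes into $\mathbb{P}$ whose outer boundary is $\ell_n$ (up to homotopy) and whose finitely many inner boundary circles are, after one more retraction, small loops contained in $V\subseteq U_0$, so Remark~\ref{holes} yields $\iota_\#[\ell_n]\in\pi(b_0,U_0)$.

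The main obstacle is bookkeeping the infinitary nature of $\pi_1(\mathbb{H})$ correctly: an arbitrary element of $\iota_\#\pi_1(\mathbb{H},b_0)$ is represented by a loop that may traverse infinitely many circles $C_n$ in an intricate order, so one cannot simply ``write it as a finite product of $[\ell_n]$'s.'' The resolution — which I expect to be the technically delicate point — is the split $[\gamma]=[\gamma\cdot(r_{1,N-1}\circ\gamma)^-]\cdot[r_{1,N-1}\circ\gamma]$, observing that the first factor is a loop whose image lies in $\mathbb{H}_N\subseteq V$ (this uses that $r_{1,N-1}$ is a retraction onto $\mathbb{H}_{1,N-1}$ fixing it and collapsing the rest into $b_0$, so the error loop never leaves $\mathbb{H}_N$), hence is in $\pi(b_0,U_0)$, while the second factor lies in the \emph{finitely generated} free group $\pi_1(\mathbb{H}_{1,N-1},b_0)$ on $[\ell_1],\dots,[\ell_{N-1}]$ and so the argument above applies verbatim. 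One should double-check that $V$ being a neighborhood of $b_0$ in $\mathbb{P}$ (not just in $\mathbb{H}$) indeed forces $C_n\subseteq V$ for large $n$: this holds because $C_n=f_n(\partial D_n)$ and the quotient topology on $\mathbb{P}$ restricts to the subspace topology on $\mathbb{H}$, so $V\cap\mathbb{H}$ is a neighborhood of $b_0$ in $\mathbb{H}$, which contains all but finitely many $C_n$ by definition of the Hawaiian Earring topology.
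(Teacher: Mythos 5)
Your overall strategy matches the paper's: absorb the ``deep'' part of the loop into $\pi(b_0,U_0)$ using that $\mathbb{H}_N\subseteq U_0$, reduce the remaining ``coarse'' part to the finitely many generators $[\ell_1],\dots,[\ell_{N-1}]$, and push each of these into $\pi(b_0,U_0)$ by iterating Equation~(\ref{relation}) together with normality of $\pi(b_0,U_0)$ (or, equivalently, via the disk-with-holes picture of Remark~\ref{holes}); that part is correct and is exactly what the paper does. The genuine gap is in your treatment of the deep part. The factor $\gamma\cdot(r_{1,N-1}\circ\gamma)^-$ is \emph{not} a loop whose image lies in $\mathbb{H}_N$: its image contains $\gamma([0,1])\cup r_{1,N-1}(\gamma([0,1]))$, hence every large circle that $\gamma$ visits. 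Worse, its homotopy class need not lie in $\iota_\#\pi_1(\mathbb{H}_N,b_0)$ at all: for $\gamma=\ell_1\cdot\ell_N\cdot\ell_1^-$ the retracted loop is null-homotopic, so the ``error'' class is $[\ell_1][\ell_N][\ell_1]^{-1}$, which by the normal form in the splitting $\pi_1(\mathbb{H},b_0)\cong \pi_1(\mathbb{H}_{1,N-1},b_0)\ast\pi_1(\mathbb{H}_N,b_0)$ is not represented by any loop in $\mathbb{H}_N$. So the sentence you lean on (``the error loop never leaves $\mathbb{H}_N$'') is false, and it occurs precisely at the step you yourself flagged as the delicate one.

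The statement that would repair your argument is weaker and true: $[\gamma\cdot(r_{1,N-1}\circ\gamma)^-]$ lies in $\ker(r_{1,N-1\#})$, and this kernel equals the normal closure of $\iota_\#\pi_1(\mathbb{H}_N,b_0)$ in $\pi_1(\mathbb{H},b_0)$ --- either by the free-product splitting just quoted, or by expressing $[\gamma]$ as a finite alternating product $[\gamma_1][\delta_1]\cdots[\gamma_m][\delta_m]$ with the $\gamma_j$ in $C_1\cup\cdots\cup C_{N-1}$ and the $\delta_j$ in $\mathbb{H}_N$, which is how the paper's proof proceeds. Since $\pi(b_0,U_0)$ is normal in $\pi_1(\mathbb{P},b_0)$ and contains $\iota_\#\pi_1(\mathbb{H}_N,b_0)$, membership in that normal closure does suffice for your conclusion, but you must invoke (or prove) one of these decomposition facts; it cannot be replaced by the claim that the error term itself stays in $\mathbb{H}_N$. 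With that substitution your proof becomes essentially the paper's.
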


\begin{proof}
 Let $[\alpha]\in \pi_1(\mathbb{H},b_0)$ and  $\mathcal{U}\in Cov(\mathbb{P})$. Choose $U\in {\mathcal U}$ with $b_0\in U$ and fix $n\in \mathbb{N}$ with $\iota(\mathbb{H}_n)\subseteq U$.   Express $[\alpha]=[\gamma_1][\delta_1][\gamma_2][\delta_2]\cdots[\gamma_m][\delta_m]$ with loops $\gamma_j$ in $C_1\cup C_2\cup \cdots \cup C_{n-1}$ and loops  $\delta_j$ in $\mathbb{H}_n$. Then $\iota_\#([\delta_j])\in \pi(b_0,U)$ for every $j$. Also, for every $j$, we have $\iota_\#([\gamma_j])\in \left<[\ell_1],[\ell_2],\dots,[\ell_{n-1}]\right>\leqslant \pi_1(\mathbb{P},b_0)$, so that repeated application of Equation~(\ref{relation}) yields $\iota_\#([\gamma_j])\in \pi(b_0,U)$. Hence, $\iota_\#([\alpha])\in \pi^s(\mathbb{P},b_0)$.
\end{proof}

\begin{theorem}\label{thm:notinj}
$\Psi_\mathbb{P}:\pi_1(\mathbb{P},b_0)\rightarrow \check{\pi}_1(\mathbb{P},b_0)$ is not injective.
\end{theorem}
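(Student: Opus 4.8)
The plan is to produce an explicit nontrivial element of $\ker\Psi_\mathbb{P}$ coming from the Hawaiian Earring; concretely, I will argue that $\iota_\#[\ell_1]$ is such an element. Everything needed has already been assembled, so the argument is short.

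First I would invoke Remark~\ref{kernel}, whose general (non-metrizable) half gives the inclusion $\pi^s(\mathbb{P},b_0)\subseteq\ker\Psi_\mathbb{P}$. Combined with Lemma~\ref{intoS} this yields
\[
\iota_\#\pi_1(\mathbb{H},b_0)\leqslant\pi^s(\mathbb{P},b_0)\subseteq\ker\Psi_\mathbb{P},
\]
so in particular $\iota_\#[\ell_1]\in\ker\Psi_\mathbb{P}$. Next I would check that $\iota_\#[\ell_1]\neq 1$ in $\pi_1(\mathbb{P},b_0)$. Since $\mathbb{H}_1^+=\mathbb{H}$ and $\iota_1=\iota$, Lemma~\ref{HP-inj} with $n=1$ says that $\iota_\#:\pi_1(\mathbb{H},b_0)\to\pi_1(\mathbb{P},b_0)$ is injective, so it suffices to observe that $[\ell_1]\neq 1$ in $\pi_1(\mathbb{H},b_0)$: the retraction $\mathbb{H}\to C_1$ collapsing the remaining circles to $b_0$ carries $[\ell_1]$ to a generator of $\pi_1(C_1,b_0)\cong\mathbb{Z}$, so $[\ell_1]$ has infinite order. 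Hence $\ker\Psi_\mathbb{P}$ contains the nontrivial element $\iota_\#[\ell_1]$, and $\Psi_\mathbb{P}$ is not injective.

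As for the main obstacle: there is essentially none left at this stage, since the substance of the theorem has been front-loaded into Lemmas~\ref{intoS} and \ref{HP-inj}. The one point that genuinely requires Lemma~\ref{HP-inj} (rather than Lemma~\ref{intoS} alone) is the nontriviality of $\iota_\#\pi_1(\mathbb{H},b_0)$: a priori, attaching the pants $P_n$ could conceivably have trivialized all of $\pi_1(\mathbb{H})$ inside $\pi_1(\mathbb{P})$ — relation~(\ref{relation}) rewrites each $[\ell_n]$ as a product of conjugates of ``later'' generators, so the persistence of the image is not obvious — and it is precisely the chain of Lemmas~\ref{dn-inj}, \ref{dn-com-inj}, and \ref{direct} culminating in Lemma~\ref{HP-inj} that rules this out. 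Thus the proof amounts to combining Remark~\ref{kernel}, Lemma~\ref{intoS}, and Lemma~\ref{HP-inj}, together with the elementary fact that $[\ell_1]$ has infinite order in $\pi_1(\mathbb{H},b_0)$.
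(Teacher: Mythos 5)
Your proposal is correct and follows exactly the paper's own argument: the paper proves Theorem~\ref{thm:notinj} by combining Lemma~\ref{HP-inj} (with $n=1$), Lemma~\ref{intoS}, and Remark~\ref{kernel} to get $1\not=\iota_\#\pi_1(\mathbb{H},b_0)\leqslant \pi^s(\mathbb{P},b_0)\leqslant \ker\Psi_\mathbb{P}$. Your only additions (exhibiting $\iota_\#[\ell_1]$ explicitly and noting that only the containment half of Remark~\ref{kernel} is needed since $\mathbb{P}$ is not metrizable) are harmless elaborations of the same proof.
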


\begin{proof}

    Combining Lemma~\ref{HP-inj} (with $n$=1), Lemma~\ref{intoS}, and  Remark~\ref{kernel}, we obtain $1\not=\iota_\#\pi_1(\mathbb{H},b_0)\leqslant \pi^s(\mathbb{P},b_0)\leqslant \ker \Psi_\mathbb{P}$.
\end{proof}

Recall that if the fundamental group of a Peano continuum does not (canonically) inject into the first \v{C}ech homotopy group, then it is not residually n-slender \cite{EF}. However, since $\mathbb{P}$ is not a Peano continuum, we verify this separately:

\begin{definition}[Noncommutatively slender \cite{E1992}]
A group $G$ is called {\em noncommutatively slender} ({\em n-slender} for short) if for every homomorphism $h:\pi_1(\mathbb{H},b_0)\rightarrow G$, there is
a $k\in \mathbb{N}$ such that $h([\alpha])=1$ for all loops $\alpha:([0,1],\{0,1\})\rightarrow (\mathbb{H}_k,b_0)$.
\end{definition}

 Recall that a group $G$ is called {\em residually n-slender} (respectively {\em residually free}) if for every $1\not=g\in G$ there is an n-slender (respectively free) group $S$ and a homomorphism $h:G\rightarrow S$, such that $h(g)\not=1$.

\begin{proposition}\label{notres}
$\pi_1(\mathbb{P},b_0)$ is not residually n-slender.
\end{proposition}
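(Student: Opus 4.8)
The plan is to argue by contradiction using the non-injectivity already established in Theorem~\ref{thm:notinj}, together with the fact that $\iota_\#\pi_1(\mathbb{H},b_0)\leqslant\ker\Psi_{\mathbb{P}}$ (Lemma~\ref{intoS}) while $\iota_\#$ itself is injective (Lemma~\ref{HP-inj} with $n=1$). Suppose $\pi_1(\mathbb{P},b_0)$ were residually n-slender. Then there would be an n-slender group $S$ and a homomorphism $\varphi:\pi_1(\mathbb{P},b_0)\to S$ that is nontrivial on some element $[\alpha]$ of $\iota_\#\pi_1(\mathbb{H},b_0)$; write $[\alpha]=\iota_\#([\omega])$ for a loop $\omega$ in $\mathbb{H}$. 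Composing, $\varphi\circ\iota_\#:\pi_1(\mathbb{H},b_0)\to S$ is a homomorphism into an n-slender group, so by definition of n-slenderness there is a $k\in\mathbb{N}$ such that $\varphi\circ\iota_\#$ kills every loop supported in $\mathbb{H}_k$.

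The key step is then to show that $\varphi\circ\iota_\#$ must in fact be trivial on all of $\pi_1(\mathbb{H},b_0)$, contradicting $\varphi(\iota_\#[\omega])\neq 1$. This is exactly the computation already carried out in the proof of Lemma~\ref{intoS}: any $[\omega]\in\pi_1(\mathbb{H},b_0)$ can be written as a product $[\gamma_1][\delta_1]\cdots[\gamma_m][\delta_m]$ with each $\delta_j$ a loop in $\mathbb{H}_k$ and each $\gamma_j$ a loop in $C_1\cup\cdots\cup C_{k-1}$. The classes $\iota_\#[\delta_j]$ are killed by $\varphi\circ\iota_\#$ by choice of $k$. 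For the $\gamma_j$, repeated application of the relation~(\ref{relation}), namely $[\ell_n]=[\rho_{n,1}][\ell_{2n}][\rho_{n,1}]^{-1}[\rho_{n,2}][\ell_{2n+1}][\rho_{n,2}]^{-1}$, rewrites each $\iota_\#[\ell_i]$ with $i<k$ as a product of conjugates of classes $\iota_\#[\ell_m]$ with $m$ arbitrarily large, hence (after finitely many steps, pushing all indices past $k$) as a product of conjugates of classes of loops in $\mathbb{H}_k$; these too are killed by $\varphi\circ\iota_\#$. Therefore $\varphi(\iota_\#[\omega])=1$ for every $[\omega]$, the desired contradiction.

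The main obstacle — and the only point requiring care — is making the rewriting via~(\ref{relation}) rigorous: one must verify that iterating the substitution terminates after finitely many steps in a product of conjugates of elements of $\iota_\#\pi_1(\mathbb{H}_k,b_0)$, with the conjugating elements expressed in terms of the $[\rho_{i,j}]$ and the finitely many $[\ell_i]$ encountered. Since applying~(\ref{relation}) to $[\ell_i]$ produces letters $[\ell_{2i}],[\ell_{2i+1}]$ of strictly larger index, after at most $\lceil\log_2 k\rceil$ rounds every generator appearing is either some $[\rho_{i,j}]$ (which $\varphi\circ\iota_\#$ need not kill, but which appears only in conjugating positions $g\,(\cdot)\,g^{-1}$) or some $[\ell_m]$ with $m\geqslant k$. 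Grouping the finitely many conjugators, $\iota_\#[\gamma_j]$ becomes a finite product of conjugates $g\,\iota_\#[\ell_m]\,g^{-1}$ with $m\geqslant k$, each of which maps to $1$ under $\varphi\circ\iota_\#$ because $\ell_m$ is a loop in $\mathbb{H}_k$. This is precisely the argument structure of Lemma~\ref{intoS}, so in the write-up I would invoke that lemma's computation rather than repeat it in full.
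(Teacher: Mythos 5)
Your argument is correct and rests on the same mechanism as the paper's proof: n-slenderness applied to $h\circ\iota_\#$ kills $[\ell_m]$ for all sufficiently large $m$, and Equation~(\ref{relation}) propagates this triviality down to all indices because a homomorphism kills conjugates of elements it kills (note only that it is $\varphi$ itself, not $\varphi\circ\iota_\#$, that kills the conjugates $g[\ell_m]g^{-1}$, since the conjugators involve the $[\rho_{i,j}]$, which are not loops in $\mathbb{H}$). The paper streamlines the same idea by testing the single element $[\ell_1]$, nontrivial by Lemma~\ref{HP-inj}, so that neither the decomposition borrowed from Lemma~\ref{intoS} nor the iterated rewriting you single out as the main obstacle is needed: a downward induction on Equation~(\ref{relation}) immediately gives $h([\ell_n])=1$ for all $n$ and every homomorphism $h$ to an n-slender group.
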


\begin{proof}
Consider $1\not=[\ell_1]\in \pi_1(\mathbb{P},b_0)$. Let $h:\pi_1(\mathbb{P},b_0) \rightarrow S$ be a homomorphism to an n-slender group $S$. It suffices to show that $h([\ell_1])=1$. If we precompose $h$ with the homomorphism  $\iota_\#:\pi_1(\mathbb{H},b_0)\rightarrow \pi_1(\mathbb{P},b_0)$,  induced by inclusion $\iota:\mathbb{H}\hookrightarrow \mathbb{P}$, and note that $S$ is n-slender, we see that $h([\ell_k])=h\circ \iota_\#([\ell_k])=1$ for all but finitely many $k$. However, by Equation~(\ref{relation}), we have $[\ell_n]=[\rho_{n,1}][\ell_{2n}][\rho_{n,1}]^{-1} [ \rho_{n,2}][ \ell_{2n+1}][ \rho_{n,2}]^{-1}$ in $\pi_1(\mathbb{P},b_0)$ for all $n$. Hence, $h([\ell_n])=1$ for all $n$.
\end{proof}

 \begin{remark}\label{NRF}
 Every free group is n-slender \cite{E1992}. So, $\pi_1(\mathbb{P},b_0)$ is not residually free.
\end{remark}

\section{The strongly homotopically Hausdorff property}\label{sec:SHH}

\begin{definition}[Strongly homotopically Hausdorff \cite{CMRZZ}] \label{SHH} A path-connected space $X$ is called {\em strongly homotopically Hausdorff at} $x\in X$ if for every essential loop $\gamma$ in $X$, there is an open neighborhood $U$ of $x$ in $X$ such that $\gamma$ cannot be freely homotoped into $U$, that is, if
\[\bigcap_{U\in {\mathcal T}_x} \bigcup_{\alpha(1)\in U} \pi(\alpha,U)=\{1\}.\]
(If $U$ is path connected, we may replace ``$\alpha(1)\in U$'' by ``$\alpha(1)=x$''.)
The space $X$ is called {\em strongly homotopically Hausdorff} if it is strongly homotopically Hausdorff at every point $x\in X$.
\end{definition}

\begin{remark}\label{Z'}
If the natural homomorphism  $\pi_1(X,x) \hookrightarrow \check{\pi}_1(X,x)$ is injective, then $X$ is {\em strongly homotopically Hausdorff} \cite{FRVZ}. However,  the converse does not hold in general \cite[Example $Z'$]{FRVZ}.
\end{remark}

\begin{remark}[The Hawaiian Mapping Torus]
Let $f:\mathbb{H}\rightarrow \mathbb{H}$ be the map given by $f\circ\ell_{n}=\ell_{n+1}$, $n\in\mathbb{N}$. The \textit{Hawaiian Mapping Torus} is the space $M_f=\mathbb{H}\times [0,1]/\sim$, where $(x,0)\sim (f(x),1)$ for all $x\in\mathbb{H}$. Identifying $\mathbb{H}$ with the image of $\mathbb{H}\times \{0\}$ in $M_f$, the inclusion $i:\mathbb{H}\hookrightarrow M_f$ induces an injection $i_\#:\pi_1(\mathbb{H},b_0)\rightarrow \pi_1(M_f,b_0)$. Consider the loop $\rho:([0,1],\{0,1\})\rightarrow (M_f,b_0)$ where $\rho(s)$ is the image of $(b_0,s)$ in $M_f$ and put $t=[\rho]\in \pi_1(M_f,b_0)$. From two applications of van Kampen's Theorem, we get that $\pi_1(M_f,b_0)$ is isomorphic to the quotient of $\pi_1(\mathbb{H},b_0)\ast \left< \,t\, \right>$ by the relations  $g=t f_{\#}(g)t^{-1}$, $g\in\pi_1(\mathbb{H},b_0)$ (see \cite{SW}). Iterating these relations, we see that each $[\gamma]\in \pi_1(\mathbb{H},b_0)$ factors in $\pi_1(M_f,b_0)$, for every $n\in\mathbb{N}$, as a conjugate $[\rho]^n[f^n\circ\gamma][\rho]^{-n}$ where the diameter of the loop $f^n\circ\gamma$ shrinks to $0$.

While $M_f$ is a Peano continuum that embeds into $\mathbb{R}^3$ and has many of the same properties as $\mathbb{P}$, it is not strongly homotopically Hausdorff, since $\ell_1$ is freely homotopic to $\ell_n$ for all $n\in\mathbb{N}$. Our detailed treatment of $\mathbb{P}$ is motivated by the fact that $\pi_1(\mathbb{P},b_0)$ exhibits a somewhat more intricate algebraic phenomenon: in order to write an element $g\in\iota_\#\pi_1(\mathbb{H},b_0)\leqslant \pi_1(\mathbb{P},b_0)$ as a product of conjugates of homotopy classes of arbitrarily small loops (as in the proof of Lemma~\ref{intoS}), it takes an exponentially growing number of distinct conjugating elements, namely products of $[\rho_{i,j}]$.
\end{remark}

A path $\alpha:[a,b]\rightarrow X$ is called {\em reduced} if for every $a\leqslant s<t\leqslant b$ with $\alpha(s)=\alpha(t)$, the loop $\alpha|_{[s,t]}$ is not null-homotopic in $X$. For a one-dimensional metric space $X$, every path $\alpha:[a,b]\rightarrow X$ is homotopic (relative to endpoints) within $\alpha([a,b])$ to either a constant path or a reduced path, which is unique up to reparametrization \cite{E2002}. A path $\alpha:[a,b]\rightarrow X$ is called {\em cyclically reduced} if $\alpha\cdot \alpha$ is reduced.

\begin{lemma}[Lemma~3.11 of \cite{BFi}]\label{cancellinglemma}
Let $\lambda:([0,1],0)\to (X,x_0)$ be a reduced path in a one-dimensional metric space $X$, $\delta:[0,1]\to X$ be a reduced loop based at $\lambda(1)$, and $\gamma$ be a reduced representative of $[\lambda\cdot \delta\cdot \lambda^{-}]$. Then there exist $s,t\in [0,1]$ such that
$\lambda|_{[0,t]}\circ \phi=\gamma|_{[0,s]}$, for some increasing homeomorphism $\phi:[0,s]\rightarrow [0,t]$,
and $\lambda([t,1])\subseteq \delta([0,1])$.
\end{lemma}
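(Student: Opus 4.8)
The plan is to reduce everything to the standard cancellation behaviour of reduced paths in a one-dimensional metric space — which rests on the existence and uniqueness up to reparametrization of reduced representatives \cite{E2002} — and then apply that cancellation twice. We may assume $\delta$ is non-constant, hence, being reduced, essential; the constant case is degenerate. The tool I would isolate first is a \emph{basic cancellation lemma}: if $\alpha,\beta$ are reduced paths with $\alpha(1)=\beta(0)$ and $\rho$ is a reduced representative of $[\alpha\cdot\beta]$, then there are parameters $a,b$ — extremal in the sense that the cancellation below is as long as possible — such that $\alpha|_{[a,1]}$ and $(\beta|_{[0,b]})^{-}$ are reparametrizations of one another (so $\alpha|_{[a,1]}\cdot\beta|_{[0,b]}$ is null-homotopic) and $\rho=\alpha|_{[0,a]}\cdot\beta|_{[b,1]}$ up to reparametrization. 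To prove this from \cite{E2002}: the set of pairs $(a,b)$ for which $\alpha|_{[a,1]}\cdot\beta|_{[0,b]}$ is null-homotopic is nonempty ($a=1$, $b=0$) and, using that homotopic reduced paths are reparametrizations of each other, has a member with $a$ minimal and a matching maximal $b$; a ``one could always cancel more'' argument shows $\alpha|_{[0,a]}\cdot\beta|_{[b,1]}$ is reduced, and uniqueness in \cite{E2002} identifies it with $\rho$.

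Next I would let $\eta$ be a reduced representative of $[\delta\cdot\lambda^{-}]$ and apply the basic cancellation lemma to $(\delta,\lambda^{-})$: this gives $\eta=\delta_1\cdot\varepsilon_2$ up to reparametrization, where $\delta_1$ is an initial segment of $\delta$, $\varepsilon_2=(\lambda|_{[0,t'']})^{-}$ is a terminal segment of $\lambda^{-}$, and the cancelled terminal segment $\delta_2$ of $\delta$ reparametrizes $\lambda|_{[t'',1]}$ (both being reduced); in particular $\lambda([t'',1])=\delta_2([0,1])\subseteq\delta([0,1])$. Since $[\lambda\cdot\delta\cdot\lambda^{-}]=[\lambda\cdot\eta]$, the path $\gamma$ is a reduced representative of $[\lambda\cdot\eta]$, so applying the basic cancellation lemma to $(\lambda,\eta)$ gives $\gamma=\lambda|_{[0,t]}\cdot\eta_2$ up to reparametrization, where $\lambda|_{[0,t]}$ is the surviving initial segment of $\lambda$. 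Reading off this initial reparametrization of $\gamma$ is exactly the first assertion, $\lambda|_{[0,t]}\circ\phi=\gamma|_{[0,s]}$. Moreover the cancelled terminal segment of $\lambda$ is all of $\lambda|_{[t,1]}$, it reparametrizes $\eta_1^{-}$ where $\eta_1$ is the cancelled initial segment of $\eta$, and since both $\lambda|_{[t,1]}$ and $\eta_1$ are reduced, $\lambda([t,1])=\eta_1([0,1])$.

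It then remains to show $\eta_1([0,1])\subseteq\delta([0,1])$. Here $\eta_1$ is an initial segment of $\eta=\delta_1\cdot\varepsilon_2$, and additionally $\eta_1$ reparametrizes $(\lambda|_{[t,1]})^{-}$, i.e.\ it is a backward retracing of $\lambda$ beginning at $\lambda(1)$. If $\eta_1$ lies within $\delta_1$, then $\eta_1([0,1])\subseteq\delta_1([0,1])\subseteq\delta([0,1])$ and we are done. Otherwise $\eta_1$ runs past $\delta_1$ into $\varepsilon_2=(\lambda|_{[0,t'']})^{-}$; comparing this with $\eta_1=(\lambda|_{[t,1]})^{-}$ shows that $\delta_1$ is itself a backward retracing of $\lambda$, $\delta_1=(\lambda|_{[w,1]})^{-}$ with $\lambda(w)=\lambda(t'')$, so $\delta$ reparametrizes $(\lambda|_{[w,1]})^{-}\cdot\lambda|_{[t'',1]}$. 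I would then use the maximality of the two cancellations together with the fact that $\delta$ is essential (hence not homotopic to the constant path) to argue that this over-running cannot occur — or, if it does, that the extra retraced arc of $\lambda$ it records is nevertheless covered by $\delta$ — so that in all cases $\lambda([t,1])=\eta_1([0,1])\subseteq\delta([0,1])$.

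The step I expect to be the main obstacle is exactly this last one: controlling possible over-cancellation of $\eta_1$ into the $\lambda^{-}$-tail $\varepsilon_2$. The delicacy is that it must be argued at the level of parameters rather than images — because $\lambda$ need not be injective, an identity such as $\lambda(w)=\lambda(t'')$ obtained by matching images does not by itself give $w=t''$ — and it is precisely here that one leans on the one-dimensionality of $X$ via the uniqueness of reduced representatives up to reparametrization \cite{E2002}; in the worst case the cancellation analysis has to be iterated until the retraced arc is exhausted.
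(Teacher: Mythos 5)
The paper does not prove this lemma; it is quoted verbatim as Lemma~3.11 of \cite{BFi}, so there is no in-paper argument to compare yours with. Judged on its own terms, the first two stages of your plan are sound: the basic cancellation lemma for a concatenation of two reduced paths in a one-dimensional space is a standard consequence of the uniqueness of reduced representatives, and applying it first to $\delta\cdot\lambda^{-}$ and then to $\lambda\cdot\eta$ correctly produces the parameter $t$ and the identification $\lambda([t,1])=\eta_1([0,1])$. The genuine gap is exactly the step you flag, and the first of your two proposed resolutions is false: over-running \emph{does} occur. Take $X$ a wedge of four circles $a,b,c,d$, let $\lambda$ be the reduced edge loop spelling $dbcba^{-1}$ and $\delta$ the reduced loop $acba^{-1}$. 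Then the reduced representative of $[\delta\cdot\lambda^{-}]$ is $\eta=ab^{-1}d^{-1}$ with $\delta_1=a$, $\delta_2=cba^{-1}$ and $\varepsilon_2=b^{-1}d^{-1}$, while the reduced representative of $[\lambda\cdot\eta]$ is $\gamma=dbcd^{-1}$, so that $\eta_1=ab^{-1}$ strictly contains $\delta_1=a$ and runs one letter into $\varepsilon_2$. So you cannot rule the case out; you must prove that the over-run arc of $\lambda$ is nevertheless contained in $\delta([0,1])$, and for that you offer only an appeal to ``maximality'' and a possible iteration, with no argument. That containment is where the real content of the lemma sits: in the example it holds only because the over-run letter $b$ happens to reappear inside $\delta_2=cba^{-1}$, and showing this in general requires unwinding the identity that $\delta_1$ together with the over-run segment retraces a terminal segment of $\lambda$ which itself terminates in $\delta_2$ --- a periodicity argument, not a formality.

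If you want to complete the proof, a route that avoids the over-running issue altogether is to cancel from the middle out rather than from the outside in. First write $\delta=p\cdot\tilde{\delta}\cdot p^{-}$ with $\tilde{\delta}$ cyclically reduced (as in the proof of Theorem~\ref{SHHthm}); then $[\lambda\cdot\delta\cdot\lambda^{-}]=[q\cdot\tilde{\delta}\cdot q^{-}]$ where $q$ is the reduced representative of $[\lambda\cdot p]$, and by the basic cancellation lemma the part of $\lambda$ cancelled in forming $q$ has image in $p([0,1])\subseteq\delta([0,1])$. Finally, when $q\cdot\tilde{\delta}\cdot q^{-}$ is reduced, the terminal segment of $q$ that cancels is forced to spell out an initial segment of a power of $\tilde{\delta}$ (each stage of cancellation replaces $\tilde{\delta}$ by a cyclic rotation of itself, which is again cyclically reduced and has the same image), so its image lies in $\tilde{\delta}([0,1])\subseteq\delta([0,1])$; the surviving prefix of $q$ is either an initial segment of $\lambda$, giving $t$, or contains all of the surviving part of $\lambda$, in which case one takes $t$ there. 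Either way, the final containment requires an argument of this kind, and your proposal does not supply one.
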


\begin{theorem}\label{SHHthm}
$\mathbb{P}$ is strongly homotopically Hausdorff.
\end{theorem}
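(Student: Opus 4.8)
The plan is to show that $\mathbb{P}$ is strongly homotopically Hausdorff by checking the condition at each point separately, with the essential difficulty concentrated at the single wild point $b_0$. At every point $x \neq b_0$, the space $\mathbb{P}$ is a $2$-manifold (with boundary) by Proposition~2.1(a), hence locally contractible there, so $\mathbb{P}$ is even semilocally simply connected at $x$; a small neighborhood $U$ of $x$ then admits no essential loops at all, and the condition in Definition~4.1 holds trivially. So the real content is to verify the condition at $b_0$, i.e.\ to show that if $\gamma$ is a loop in $\mathbb{P}$ that can be freely homotoped into arbitrarily small neighborhoods of $b_0$, then $\gamma$ is null-homotopic.

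First I would reduce to a one-dimensional model. Given an essential loop $\gamma$, since $\gamma([0,1])$ is compact it misses all but finitely many open pants $P_i^\circ$, and by applying the deformation retractions $d_1, d_2, \dots, d_{m}$ (which peel the finitely many pants that $\gamma$ touches off onto the one-dimensional sets $B_{i,j}$ via Equation~(2.1)) we may assume $\gamma$ lies in some $\mathbb{H}_n^+$, which is homeomorphic to the Hawaiian Earring $\mathbb{H}$ by the description preceding Lemma~2.3. Crucially, these retractions are basepoint-preserving and carry neighborhoods of $b_0$ in $\mathbb{P}$ to neighborhoods of $b_0$ in $\mathbb{H}_n^+$, and $\iota_{n\#}\colon \pi_1(\mathbb{H}_n^+, b_0)\to \pi_1(\mathbb{P},b_0)$ is injective by Lemma~2.6. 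Now suppose $\gamma$ (viewed in $\mathbb{H}_n^+$, and by Lemma~2.6 still essential there) is freely homotopic in $\mathbb{P}$ into every neighborhood of $b_0$. Replacing $\gamma$ by its reduced representative in the one-dimensional space $\mathbb{H}_n^+$ using \cite{E2002}, I get a reduced (indeed I may take it cyclically reduced) loop $\lambda$ with $[\lambda]=[\gamma]\neq 1$.

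The heart of the argument is to derive a contradiction from a free homotopy of $\lambda$ into a small neighborhood of $b_0$ inside $\mathbb{P}$. A free homotopy realizing $\gamma \simeq \delta$ with $\delta$ a loop in a small neighborhood $U$ of $b_0$ amounts, after choosing a path $\alpha$ from $b_0$ to the basepoint of $\delta$, to the relation $[\gamma] = [\alpha\cdot\delta\cdot\alpha^{-}]$ in $\pi_1(\mathbb{P},b_0)$. Since the free homotopy has compact image, it also misses all but finitely many $P_i^\circ$, so again pushing forward by the $d_i$'s I land inside some $\mathbb{H}_N^+ \approx \mathbb{H}$ (with $N \ge n$), where the relation $[\lambda] = [\alpha\cdot\delta\cdot\alpha^{-}]$ holds with $\delta$ a loop in $\mathbb{H}_M^+$ for $M$ as large as we like (by choosing $U$ small). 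Now I apply Lemma~4.9 (Lemma~3.11 of \cite{BFi}): writing a reduced representative of $[\alpha\cdot\delta\cdot\alpha^{-}]$ in the one-dimensional space $\mathbb{H}_N^+$, it has the form $\alpha|_{[0,t]}\circ\phi$ followed by a tail of $\alpha$ lying inside $\delta([0,1]) \subseteq \mathbb{H}_M^+$. Comparing with the cyclically reduced loop $\lambda$ and letting $M \to \infty$ forces $\lambda$ to be supported, up to reduction, in circles $C_i$ with $i\to\infty$ — but a fixed nontrivial reduced loop in $\mathbb{H}_N^+$ cannot be conjugate, for all large $M$, to a loop in $\mathbb{H}_M^+$, since its (finitely many, lowest-index) edges cannot cancel. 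This contradicts $[\lambda]\neq 1$.

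The main obstacle I anticipate is the bookkeeping in this last step: matching the reduced-word combinatorics of $\lambda$ against the conclusion of Lemma~4.9 while tracking how the homotopy (and hence the conjugating path $\alpha$ and the small loop $\delta$) behaves under the sequence of deformation retractions $d_1, \dots, d_{N-1}$. In particular one must ensure that pushing the free homotopy forward by the $d_i$ does not move the small loop $\delta$ out of a small neighborhood of $b_0$ — which follows from property (iv) of the $\phi_n$, since points outside $\mathbb{H}_{M}^+$ move within the relevant pants images — and that the cyclic reducedness of $\lambda$ is genuinely used to rule out the degenerate case $t = 1$ in Lemma~4.9 (where $\lambda$ would be swallowed entirely by $\delta$ and we would only get that $\lambda$ is small, not that it is trivial). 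I expect the free-homotopy case to require this slightly more careful version of the cancellation lemma than the based-loop statement, and that is where the cyclic reducedness hypothesis earns its keep.
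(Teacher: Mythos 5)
Your overall strategy---reduce to the single wild point $b_0$, push everything into a one-dimensional $\mathbb{H}_N^+$ via the retractions $d_i$, pass to a cyclically reduced representative, and contradict cyclic reducedness using the cancellation lemma (Lemma~\ref{cancellinglemma})---is the same as the paper's. But there is a genuine gap at the pivotal step where you assert that, ``by choosing $U$ small,'' the conjugated small loop $\delta$ may be taken to lie in $\mathbb{H}_M^+$ for $M$ as large as you like. Every neighborhood of $b_0$ in $\mathbb{P}$ is two-dimensional: it necessarily contains an open piece of every pair of pants $P_k$ (a neighborhood of $\{a_k,b_k,c_k\}$ in $P_k$) and arcs of every circle $C_i$, including $C_1$; in fact $\pi(b_0,U)$ already contains all of $\iota_\#\pi_1(\mathbb{H},b_0)$ for every neighborhood $U$ (Lemma~\ref{intoS}), so smallness of $U$ by itself cannot make $\delta$ combinatorially negligible. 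What the paper does at this point, and what your sketch omits, is construct specific neighborhoods $V_n$---unions over all $k$ of three pairwise disjoint collars $N^i_{k,n}$ inside $P_k$---which deformation retract onto the Hawaiian-earring neighborhoods $U_n\subseteq\mathbb{H}$; only then can the loop $\delta_n\subseteq V_n$ be replaced by a reduced loop in $U_n$, which forces it into $\mathbb{H}_{n+1}$ and hence, after applying the $d_i$, to miss $B_{m,1}\setminus\{b_0\}$. Your proposed substitute---controlling the pushforward of $\delta$ via property (iv) of $\phi_n$---does not do this job: (iv) only confines the motion of a point to the image of the entire pair of pants being collapsed, which is not a small set near $b_0$, so nothing in your argument prevents the pushforward of $\delta$ from traversing $B_{m,1}$, and that is precisely what the final cancellation argument must rule out.

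A secondary problem is the shape of the final contradiction. Since $\mathbb{H}_M^+$ contains all circles $B_{i,j}$ with $i<M$, the condition ``$\delta$ is a loop in $\mathbb{H}_M^+$'' does not become restrictive as $M\to\infty$ (every loop under consideration lies in some $\mathbb{H}_M^+$), so ``letting $M\to\infty$'' forces nothing as stated. The paper instead fixes the minimal $m$ for which $\gamma$ meets $C_m\setminus\{b_0\}$: the cyclically reduced $\gamma$ then fully traverses $C_m$, so its pushforward $\gamma'$ traverses $B_{m,1}$, whereas the pushforward of $\delta_m$ (a reduced loop in $U_m$, hence in $\mathbb{H}_{m+1}$) misses $B_{m,1}\setminus\{b_0\}$; two applications of Lemma~\ref{cancellinglemma}, to $\gamma'$ and to $\gamma'^-$, then show that $\gamma'$ is not cyclically reduced, a contradiction. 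Your instinct that cyclic reducedness is what prevents $\lambda$ from being ``swallowed'' is correct, but the argument needs the single well-chosen index $m$ together with the neighborhood construction above, not a limit over $M$.
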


\begin{proof}For $n\in \mathbb{N}$, we define  $U_n=\mathbb{H}\cap\{(x,y)\in \mathbb{R}^2\mid x<\frac{2n+1}{n(n+1)}\}$. Since $\operatorname{diam}(C_{n+1})=\frac{2}{n+1}<\frac{2n+1}{n(n+1)}<\frac{2}{n}=\operatorname{diam}(C_n)$, we see that the sequence $U_1\supseteq U_2\supseteq U_3 \supseteq \cdots$ forms a  neighborhood basis for $\mathbb{H}$ at $b_0$. For every pair $n,k\in \mathbb{N}$, $f_k^{-1}(U_n)$ has three components $L^0_{k,n}$, $L^1_{k,n}$, and $L^2_{k,n}$, each of which is an open arc or a circle (there are four cases based on the position of $n$ relative to $k$, $2k$, and $2k+1$), such that $a_k\in L^0_{k,n}\subseteq \partial D_k$, $b_k\in L^1_{k,n}\subseteq \partial D_{k,1}$ and $c_k\in L^2_{k,n}\subseteq \partial D_{k,2}$. Since, for a given $k$, $L^i_{k,n+1}\subseteq L^i_{k,n}$, we may choose three pairwise disjoint open neigborhoods $N^0_{k,n}$, $N^1_{k,n}$, $N^2_{k,n}$ of $L^0_{k,n}$, $L^1_{k,n}$, $L^2_{k,n}$ in $P_k$, respectively, such that $N^i_{k,n+1}\subseteq N^i_{k,n}$,  $N^i_{k,n}\cap \partial P_k=L^i_{k,n}$, and $N^i_{k,n}$  deformation retracts onto $L^i_{k,n}$.  Define $V_{k,n}= N^0_{k,n}\cup N^1_{k,n}\cup N^2_{k,n}$.
 (See Figure~\ref{collar}.) Put $V_n=\bigcup_{k\in\mathbb{N}} f_k(V_{k,n})$. Then $V_n$ is an open neighborhood of $b_0$ in $\mathbb{P}$, $V_{n+1}\subseteq V_n$, and $V_n$ deformation retracts onto $U_n$. (Note that $V_1\supseteq V_2\supseteq V_3\supseteq \cdots$ does not form a neighborhood basis for $\mathbb{P}$ at $b_0$.)

\begin{figure}
  \centering
  \includegraphics[scale=1]{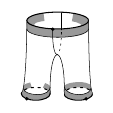}
  \caption{$V_{k,n}$ (gray region) with $k<2k\leqslant n<2k+1$}\label{collar}
\end{figure}

Now suppose, to the contrary, that $\mathbb{P}$ is not strongly homotopically Hausdorff. Then $\mathbb{P}$ is not strongly homotopically Hausdorff at $b_0$, since $\mathbb{P}\setminus\{b_0\}$ is locally contractible. Hence, there is a $1\not=[\gamma]\in\pi_1(\mathbb{P},b_0)$ such that for every $n\in \mathbb{N}$, there is a path $\lambda_n:[0,1]\rightarrow \mathbb{P}$ from $\lambda_n(0)=b_0$ to $\lambda_n(1)\in V_n$ with $[\gamma]\in \pi(\lambda_n,V_n)$.

Since each $V_n$ is path connected, we may assume that $\lambda_n(1)=b_0$. Choose loops $\delta_n$ in $V_n$ with $[\gamma]=[\lambda_n][\delta_n][\lambda_n]^{-1}\in \pi_1(\mathbb{P},b_0)$.  Since $V_n$ deformation retracts onto $U_n$, we may assume that $\delta_n$ lies in $U_n$. We may also assume that each $\delta_n$ is {\em reduced} in $\mathbb{H}$.
 This implies that $\delta_n$ lies in $\mathbb{H}_{n+1}$, because $C_m$ is not fully contained in $U_n$ if $m\geqslant n$.
Replacing each $\lambda_n$ by  $\lambda_1^-\cdot \lambda_n$, we may assume that $\gamma=\delta_1$, which lies in $U_1$.
There is a maximal $s_0\in [0,1]$ such that $\gamma|_{[0,s_0]}$ is a reparametrization of $(\gamma|_{[t_0,1]})^-$ for some $t_0\in (s_0,1]$. Then $\gamma(s_0)=\gamma(t_0)=b_0$, $\gamma|_{[s_0,t_0]}$ is cyclically reduced, and $[\gamma]=[\gamma|_{[0,s_0]}][\gamma|_{[s_0,t_0]}][\gamma|_{[0,s_0]}]^{-1}$. Replacing each $\lambda_n$ by $\gamma|_{[0,s_0]}^-\cdot \lambda_n$ and replacing $\gamma$ by $\gamma|_{[s_0,t_0]}$, we may assume that $\gamma$ is {\em cyclically} reduced.

Let $m$ be minimal such that $\gamma([0,1])$ intersects $C_m\setminus \{b_0\}$. Then $\gamma$ fully traverses $C_m$, at least once,
and $\gamma$ lies in $\mathbb{H}_m$. Let $F$ be a homotopy from $\gamma$ to $\lambda_{m}\cdot \delta_{m}\cdot \lambda_{m}^-$ (relative to endpoints) within $\mathbb{P}$. Choose $n>m$ such that the image of $F$ misses all $P^\circ_i$ with $i>n$. Then $F'=d_n\circ d_{n-1}\circ \cdots \circ d_1\circ F$ is a homotopy from $\gamma'=d_n\circ d_{n-1}\circ \cdots \circ d_1\circ \gamma$ to $\lambda\cdot\delta\cdot \lambda^-$ (relative to endpoints) within $\mathbb{H}^+_{n+1}$, where $\lambda=d_n\circ d_{n-1}\circ \cdots \circ d_1\circ \lambda_{m}$ and $\delta=d_n\circ d_{n-1}\circ \cdots \circ d_1\circ \delta_{m}$.

On one hand, $\gamma'$ is a cyclically reduced loop in $\mathbb{H}^+_{n+1}$ which traverses $B_{m,1}$. On the other hand,
the image of $\delta$  is disjoint from $B_{m,1}\setminus\{b_0\}$. (In particular, $\lambda$ is not null-homotopic in $\mathbb{H}^+_{n+1}$.)
Therefore, using reduced representatives $\lambda'$ and $\delta'$ of $[\lambda]$ and $[\delta]$, respectively, we have $s,t>0$ in Lemma~\ref{cancellinglemma}. Applying Lemma~\ref{cancellinglemma} to $\gamma'^-$, as well, we see that $\gamma'$ is not cyclically reduced; a contradiction.
\end{proof}

\section{An inverse limit of finitely generated free monoids}\label{wordsequences}

 Let ${\mathcal W}^+_{n,k}$ denote the set of finite words (including the empty word) over the alphabet ${\mathcal A}_{n,k}=\{\rho^{\pm 1}_{i,j}\mid 1\leqslant i \leqslant n-1, 1\leqslant j\leqslant 2\}\cup\{\ell_i^{\pm 1}\mid n\leqslant i \leqslant k\}$.
 Then ${\mathcal W}^+_{n,k}$ forms a free monoid on the set ${\mathcal A}_{n,k}$ under concatenation.

The deletion of a subword of the form  $\rho_{i,j}^{+1}\rho_{i,j}^{-1}$, $\rho_{i,j}^{-1}\rho_{i,j}^{+1}$,  $\ell_i^{+1}\ell_i^{-1}$, or $\ell_i^{-1}\ell_i^{+1}$ from an element of ${\mathcal W}^+_{n,k}$ is called a {\em cancellation}. A word is called {\em reduced} if it does not allow for any cancellation. Recall that, starting with a fixed element of ${\mathcal W}^+_{n,k}$, every maximal sequence of cancellations results in the same reduced word.

  Let $F^+_{n,k}\subseteq {\mathcal W}^+_{n,k}$ be the subset of all reduced words. Then $F^+_{n,k}$ forms a free group on the set  $\{\rho_{i,j}\mid 1\leqslant i \leqslant n-1,1\leqslant j\leqslant 2\}\cup\{\ell_i\mid n\leqslant i \leqslant k\}$  under concatenation, followed by maximal cancellation. Moreover, we have  isomorphisms  $h_{n,k}:\pi_1(\mathbb{H}^+_{n,k},b_0)\rightarrow  F^+_{n,k}$, mapping $[\rho_{i,j}]\mapsto \rho_{i,j}$ and $[\ell_i]\mapsto \ell_i$.

Let $R_{n,k}: {\mathcal W}^+_{n,k+1}\rightarrow {\mathcal W}^+_{n,k}$ denote the function that deletes every occurrence of the letters $\ell_{k+1}$ and $\ell_{k+1}^{-1}$ from a word. Let $S_{n,k}:{\mathcal W}^+_{n,k}\rightarrow F^+_{n,k}$ be the function that maximally cancels words and put $T_{n,k}=S_{n,k}\circ R_{n,k}|_{F^+_{n,k+1}}:F^+_{n,k+1}\rightarrow F^+_{n,k}$.
Then $R_{n,k}$, $S_{n,k}$, and $T_{n,k}$ are monoid/group homomorphisms.

Define $g_{n,k}=h_{n,k}\circ r_{n,k\#}: \pi_1(\mathbb{H}^+_n,b_0)\rightarrow F^+_{n,k}\subseteq  {\mathcal W}^+_{n,k}$.
Since $T_{n,k}\circ g_{n,k+1}=g_{n,k}:\pi_1(\mathbb{H}^+_n,b_0)\rightarrow F^+_{n,k}$ and
$T_{n,k}\circ h_{n,k+1}=h_{n,k}\circ r_{n,k\#}:\pi_1(\mathbb{H}^+_{n,k+1},b_0)\rightarrow F^+_{n,k}$, we have, for each $n$,
 as in the proof of Lemma~\ref{dn-com-inj}, an injective homomorphism into an inverse limit $F^+_n$ of free groups:
\[g_n=(g_{n,k})_{k\geqslant n-1}:\pi_1(\mathbb{H}^+_n,b_0)\rightarrow F^+_n=\lim_{\longleftarrow} \left( F^+_{n,n-1}\stackrel{T_{n,n-1}}{\longleftarrow} F^+_{n,n}\stackrel{T_{n,n}}{\longleftarrow}F^+_{n,n+1}\stackrel{T_{n,n+1}}{\longleftarrow}\cdots\right).\]
(Note that $g_n$ is not surjective and that $F^+_n$ is not free.)

\begin{remark}\label{LEC} The image of $g_n$ equals the locally eventually constant sequences, where $(y_{n,k})_{k\geqslant n-1}\in F^+_n$ is called  {\em locally eventually constant} if for every $m\geqslant n-1$, the sequence $(R_{n,m}\circ R_{n,m+1} \circ \cdots \circ R_{n,k-1}(y_{n,k}))_{k\geqslant m}$ is eventually constant \cite{MM}.
\end{remark}

For every $n\in\mathbb{N}$, $x\in \pi_1(\mathbb{H}^+_n,b_0)$, and $m\geqslant n-1$, the sequence $(R_{n,m}\circ R_{n,m+1} \circ \cdots \circ R_{n,k-1}(g_{n,k}(x)))_{k\geqslant m}$ of (unreduced) words is eventually constant, so that we may define functions
$\omega_{n,m}:\pi_1(\mathbb{H}^+_n,b_0)\rightarrow \mathcal{W}^+_{n,m}$ by \[\omega_{n,m}(x)=R_{n,m}\circ R_{n,m+1} \circ \cdots \circ R_{n,k-1}(g_{n,k}(x)),\] for sufficiently large $k$, and obtain commutative diagrams
\[
\xymatrix{
& {\mathcal W}^+_n \ar[d]^{S_n}   & \hspace{-30pt} =  \displaystyle  \lim_{\longleftarrow} ( {\mathcal W}^+_{n,n-1} \ar[d]^{S_{n,n-1}} & \ar[l]_<(0.2){R_{n,n-1}} \ar[d]^{S_{n,n}} {\mathcal W}^+_{n,n} & \ar[l]_<(0.2){R_{n,n}} \ar[d]^{S_{n,n+1}} {\mathcal W}^+_{n,n+1} & \ar[l]_<(0.2){R_{n,n+1}} \cdots) \\
\pi_1(\mathbb{H}^+_n,b_0)  \ar@{^{(}->}[ru]^{\omega_n} \ar@{^{(}->}[r]^<(0.3){g_n} & F^+_n & \hspace{-30pt} =  \displaystyle  \lim_{\longleftarrow} ( F^+_{n,n-1} & \ar[l]_<(0.2){T_{n,n-1}} F^+_{n,n} & \ar[l]_<(0.2){T_{n,n}} F^+_{n,n+1} & \ar[l]_<(0.2){T_{n,n+1}} \cdots) }
\]
with injective functions $\omega_n=(\omega_{n,k})_{k\geqslant n-1}$ that output {\em stabilized} word sequences. (See \cite{DTW} or \cite{FZ2013b}, for example.)

For $k\geqslant 2n+1$, let $D_{n}: {\mathcal W}^+_{n,k} \rightarrow {\mathcal W}^+_{n+1,k}$ be the monomorphism that replaces every occurrence of the letter $\ell_n$ by $\rho_{n,1}\ell_{2n} \rho_{n,1}^{-1} \rho_{n,2} \ell_{2n+1} \rho_{n,2}^{-1}$ and every occurrence of the letter
$\ell_n^{-1}$ by $\rho_{n,2} \ell_{2n+1}^{-1} \rho_{n,2}^{-1}\rho_{n,1}\ell_{2n}^{-1} \rho_{n,1}^{-1}$. Then, for every $k\geqslant 2n+1$,  we obtain the following commutative diagram:
\vspace{-15pt}

\[
\xymatrix@=8pt{
&&\pi_1(\mathbb{H}^+_{n,k+1},b_0)\ar[rrr]^{d_{n\#}} \ar[ddd]^{r_{n,k\#}} \ar[dl]_<(0.3){h_{n,k+1}} &&&\pi_1(\mathbb{H}^+_{n+1,k+1},b_0) \ar[ddd]^{r_{n+1,k\#}} \ar[dl]_<(0.3){h_{n+1,k+1}}\\
&F^+_{n,k+1}\ar[rrr]^{D_n} \ar[ddd]^{T_{n,k}}&&&F^+_{n+1,k+1}\ar[ddd]^{T_{n+1,k}} &\\
{\mathcal W}^+_{n,k+1}\ar[rrr]^{D_n}\ar[ddd]^{R_{n,k}} \ar[ru]^<(0.0){S_{n,k+1}}&&&{\mathcal W}^+_{n+1,k+1}\ar[ddd]^{R_{n+1,k}} \ar[ru]^<(0.0){S_{n+1,k+1}}&&\\
&&\pi_1(\mathbb{H}^+_{n,k},b_0)\ar[rrr]^{d_{n\#}} \ar[dl]_<(0.3){h_{n,k}}&&&\pi_1(\mathbb{H}^+_{n+1,k},b_0) \ar[dl]^{h_{n+1,k}}\\
&F^+_{n,k}\ar[rrr]^{D_n} &&&F^+_{n+1,k}&\\
{\mathcal W}^+_{n,k}\ar[rrr]^{D_n} \ar[ru]^{S_{n,k}}&&&{\mathcal W}^+_{n+1,k} \ar[ru]_{S_{n+1,k}}&&
}
\]
\vspace{-5pt}

\noindent Note: For every $\omega\in F^+_{n,k}$, the word $D_n(\omega)$ is reduced.

\begin{remark} In view of the above, the direct limit structure of Lemma~\ref{direct} suggests the possibility of labelling the elements of $\pi_1(\mathbb{P}, b_0)$ using sequences of finite words over finite alphabets that gradually exclude all letters $\ell_n^{\pm 1}$ in favor of only using the conjugating letters $\rho_{n,j}^{\pm 1}$. Since such a shift causes conjugating pairs to become adjacent in words at later levels, we use monoid structures to prevent their cancellation. This, in turn, requires us to work with $D_{n}: {\mathcal W}^+_{n,k} \rightarrow {\mathcal W}^+_{n+1,k}$ in what follows, at a level just deep enough to stabilize the appropriate word sequences.
\end{remark}

Recall that ${\mathcal W}^+_{n+1,n}$ is the set of finite words over $\{\rho_{i,j}^{\pm 1}\mid 1\leqslant i \leqslant n, 1\leqslant j \leqslant 2\}$. In particular, the set ${\mathcal W}^+_{1,0}$ contains only one element: the empty word.

Let $E_{n-1}:{\mathcal W}^+_{n+1,n}\rightarrow {\mathcal W}^+_{n,n-1}$ denote the epimorphism deleting every occurrence of the letters $\rho^{\pm 1}_{n,1}$ and $\rho^{\pm 1}_{n,2}$. Then, for $k\geqslant 2n+1$, we obtain commutative trapezoids:

\begin{equation}\label{trapezoid}
\xymatrix{
{\mathcal W}^+_{1,k} \ar[r]^{D_1} \ar[d]_{R_{1,k-1}}& {\mathcal W}^+_{2,k} \ar[r]^{D_2} \ar[d]_{R_{2,k-1}}& \cdots \ar[r]^{D_{n-1}}& {\mathcal W}^+_{n,k} \ar[r]^{D_n} \ar[d]_{R_{n,k-1}}& {\mathcal W}^+_{n+1,k} \ar[d]_{R_{n+1,k-1}}\\
\vdots \ar[d]_{R_{1,n}} & \vdots \ar[d]_{R_{2,n}}&  & \vdots\ar[d]_{R_{n,n}} & \vdots\ar[d]_{R_{n+1,n}}  \\
{\mathcal W}^+_{1,n} \ar[d]_{R_{1,n-1}}& {\mathcal W}^+_{2,n} \ar[d]_{R_{2,n-1}}&  & {\mathcal W}^+_{n,n} \ar[d]_{R_{n,n-1}}& {\mathcal W}^+_{n+1,n} \ar[dl]^{E_{n-1}}\\
 &  & & {\mathcal W}^+_{n,n-1} \ar[dl]^{E_{n-2}} \\
\vdots\ar[d]_{R_{1,1}} &\vdots \ar[d]_{R_{2,1}}& \iddots \ar[dl]^{E_1}\\
{\mathcal W}^+_{1,1}   \ar[d]_{R_{1,0}}& {\mathcal W}^+_{2,1} \ar[dl]^{E_0}  \\
{\mathcal W}^+_{1,0}
}
\end{equation}

\begin{theorem}\label{inj}
There is a well-defined injective  function \[\chi=(\chi_n)_{n\in \mathbb{N}}:\pi_1(\mathbb{P},b_0)\hookrightarrow \lim_{\longleftarrow}\left( {\mathcal W}^+_{2,1}\stackrel{E_1}{\longleftarrow} {\mathcal W}^+_{3,2}\stackrel{E_2}{\longleftarrow} {\mathcal W}^+_{4,3}\stackrel{E_3}{\longleftarrow} \cdots \right)\] defined as follows: for a given $[\alpha]\in \pi_1(\mathbb{P},b_0)$ and sufficiently large $n\geqslant 2$, choose $[\beta]\in\pi_1(\mathbb{H}_n^+,b_0)$ with $\iota_{n\#}([\beta])=[\alpha]$ and  put $\chi_{n-1}([\alpha])=\omega_{n,n-1}([\beta])\in {\mathcal W}^+_{n,n-1}$.
\end{theorem}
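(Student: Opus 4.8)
The plan is to verify the two assertions packed into the statement: (1) the formula for $\chi_{n-1}([\alpha])$ does not depend on the choice of $n$ or of $[\beta]$ (well-definedness), and that the resulting sequence $(\chi_n([\alpha]))_n$ actually lies in the stated inverse limit (i.e.\ $E_{n-1}\circ \chi_n = \chi_{n-1}$); and (2) $\chi$ is injective. Throughout I would work with the big commutative diagram just before the statement and with the trapezoids of Equation~(\ref{trapezoid}), combined with the injective functions $\omega_n$ from Section~\ref{wordsequences}.

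\emph{Compatibility across levels.} First I would show that if $[\beta]\in\pi_1(\mathbb{H}_n^+,b_0)$ satisfies $\iota_{n\#}([\beta])=[\alpha]$, then $[\beta']:=d_{n\#}([\beta])\in\pi_1(\mathbb{H}_{n+1}^+,b_0)$ also satisfies $\iota_{n+1\#}([\beta'])=[\alpha]$ (immediate from $\iota_{n+1\#}\circ d_{n\#}=\iota_{n\#}$, Lemma~\ref{direct}), and that $\omega_{n+1,n}([\beta'])$ and $\omega_{n,n-1}([\beta])$ are related by $E_{n-1}$. For this I would chase the diagram: $\omega_{n,n-1}([\beta]) = R_{n,n-1}\circ R_{n,n}\circ\cdots\circ R_{n,k-1}(g_{n,k}([\beta]))$ for large $k$, while $D_n$ carries $g_{n,k}([\beta])$ to $g_{n+1,k}([\beta'])$ (commutativity of the cube relating $h_{n,k}$, $d_{n\#}$, $D_n$, and the retractions), and the trapezoid in Equation~(\ref{trapezoid}) tells me exactly how the $R$-towers on levels $n$ and $n+1$ fit together via $E_{n-1}$ at the bottom. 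Composing, $E_{n-1}(\omega_{n+1,n}([\beta'])) = \omega_{n,n-1}([\beta])$, so the images in the two inverse-limit coordinates agree. Since any two admissible choices of starting level/$[\beta]$ are connected by a chain of $d_{n\#}$'s (using $\iota_{n\#}$ injective, Lemma~\ref{HP-inj}, and that $\iota_{m\#}([\beta_1])=\iota_{m\#}([\beta_2])$ forces $[\beta_1]=[\beta_2]$, while different levels are bridged by $d_{n\#}$), well-definedness and the inverse-limit compatibility $E_{n-1}\circ\chi_n=\chi_{n-1}$ follow simultaneously.

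\emph{Injectivity.} Suppose $\chi([\alpha])=\chi([\alpha'])$. Pick $n$ large enough to serve both elements, with witnesses $[\beta],[\beta']\in\pi_1(\mathbb{H}_n^+,b_0)$. Then $\omega_{n,n-1}([\beta])=\chi_{n-1}([\alpha])=\chi_{n-1}([\alpha'])=\omega_{n,n-1}([\beta'])$. But $\omega_{n,n-1}$ is merely the first coordinate of $\omega_n=(\omega_{n,k})_{k\ge n-1}$, so equality of first coordinates is not yet enough; I need to promote it to equality of \emph{all} coordinates $\omega_{n,k}$. The key point is that the $R_{n,k}$-tower that defines $\omega_{n,n-1}$ in terms of deeper $\omega_{n,k}$ loses information, so I must instead pass to a higher level. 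The cleaner route: apply the compatibility from the previous paragraph backwards. For any $m\ge n$, write $[\beta_m]=d_{m-1\#}\circ\cdots\circ d_{n\#}([\beta])$ and similarly $[\beta'_m]$; these are the level-$m$ witnesses for $[\alpha],[\alpha']$. Then $\chi_{m-1}([\alpha])=\omega_{m,m-1}([\beta_m])$, and $\chi([\alpha])=\chi([\alpha'])$ gives $\omega_{m,m-1}([\beta_m])=\omega_{m,m-1}([\beta'_m])$ for \emph{every} $m\ge n$. Now fix the base level $n$; I claim this family of equalities forces $\omega_{n,k}([\beta])=\omega_{n,k}([\beta'])$ for all $k$, hence $\omega_n([\beta])=\omega_n([\beta'])$, hence $[\beta]=[\beta']$ (since $\omega_n$ is injective), hence $[\alpha]=\iota_{n\#}([\beta])=\iota_{n\#}([\beta'])=[\alpha']$. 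To prove the claim I would show that $\omega_{n,k}([\beta])$ is recoverable from $\omega_{m,m-1}([\beta_m])$ for $m$ chosen so that $D_{m-1}\circ\cdots\circ D_n$ has "unpacked" all letters $\ell_n,\dots,\ell_{k}$ at least once but the relevant subwords have stabilized — more precisely, the map $D_{m-1}\circ\cdots\circ D_n$ is a monomorphism (composite of the monomorphisms $D_i$) and the commuting trapezoids show $\omega_{m,m-1}([\beta_m])$ determines $E_{n-2}\circ E_{n-1}$-type truncations in a way that, for $m$ large relative to $k$, injectively encodes $\omega_{n,k}([\beta])$.

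\emph{Anticipated main obstacle.} The delicate part is exactly this last claim: that equality of the shallow coordinates $\omega_{m,m-1}$ at all deep levels $m$ recovers the full stabilized word sequence $\omega_n([\beta])$ at a fixed level. Running the $D_i$'s forward \emph{creates} adjacent conjugating pairs $\rho_{i,j}\rho_{i,j}^{-1}$ that would cancel in a group but are kept apart precisely because we are in the free \emph{monoids} $\mathcal{W}^+$; I must argue carefully that no spurious coincidence of the $E$-truncated images can occur, i.e.\ that the composite monomorphism followed by the appropriate $R$/$E$ truncation is still injective on the relevant range of letters. This is where the monoid (rather than group) structure, and the "level just deep enough to stabilize" principle from the Remark preceding the theorem, does the real work, and it is the step I would write out in full detail.
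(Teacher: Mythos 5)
Your well-definedness argument is essentially the paper's: the identity $E_{n-1}(\omega_{n+1,n}(d_{n\#}[\beta]))=\omega_{n,n-1}([\beta])$ follows from $g_{n+1,k}\circ d_{n\#}=D_n\circ g_{n,k}$ together with the trapezoid relation $E_{n-1}\circ R_{n+1,n}\circ\cdots\circ R_{n+1,k-1}\circ D_n=R_{n,n-1}\circ\cdots\circ R_{n,k-1}$, and combined with Lemmas~\ref{direct} and \ref{HP-inj} this settles both independence of the choices and membership in the inverse limit. That half is fine.

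The injectivity half, however, has a genuine gap, and it sits exactly where you place your ``anticipated main obstacle.'' You correctly reduce to showing that equality of the shallow coordinates $\omega_{m,m-1}([\beta_m])$ for all $m$ forces $[\beta]=[\beta']$, but the mechanism you invoke --- that $D_{m-1}\circ\cdots\circ D_n$ is a monomorphism and that the trapezoids let the truncated images ``injectively encode'' $\omega_{n,k}([\beta])$ --- is not an argument: the truncations $R_{m,m-1}\circ\cdots\circ R_{m,k-1}$ are very far from injective, so injectivity of the composite $D$-maps alone proves nothing about the composite with the truncations, and this is precisely the point that must be established. The paper's resolution is to introduce an auxiliary family of \emph{modified} substitution maps $\overline{D}_{j,m-1}:\mathcal{W}^+_{j,m-1}\rightarrow\mathcal{W}^+_{j+1,m-1}$, which replace $\ell_j^{\pm 1}$ by the usual conjugate word $\rho_{j,1}\ell_{2j}\rho_{j,1}^{-1}\rho_{j,2}\ell_{2j+1}\rho_{j,2}^{-1}$ (respectively its formal inverse) when $2j+1\leqslant m-1$, but by the ``hollowed-out'' word $\rho_{j,1}\rho_{j,1}^{-1}\rho_{j,2}\rho_{j,2}^{-1}$ (respectively $\rho_{j,2}\rho_{j,2}^{-1}\rho_{j,1}\rho_{j,1}^{-1}$) when $2j\geqslant m$ --- the case split being exhaustive after arranging $m$ to be even --- and to verify the commutative square
\[
R_{m,m-1}\circ\cdots\circ R_{m,k-1}\circ D_{m-1}\circ\cdots\circ D_n \;=\; \overline{D}\circ R_{n,m-1}\circ\cdots\circ R_{n,k-1}
\quad\text{on } F^+_{n,k},
\]
where $\overline{D}=\overline{D}_{m-1,m-1}\circ\cdots\circ\overline{D}_{n,m-1}$. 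Each $\overline{D}_{j,m-1}$ is injective on the free monoid (the letters $\rho_{j,1}^{\pm1},\rho_{j,2}^{\pm1}$ do not occur in the source alphabet, so the substituted blocks are recognizable), hence $\overline{D}$ is injective, and then $\chi_{m-1}([\alpha^{(s)}])=\omega_{m,m-1}([\gamma^{(s)}])=\overline{D}(\omega_{n,m-1}([\beta^{(s)}]))$ transfers the distinction $\omega_{n,m-1}([\beta^{(1)}])\neq\omega_{n,m-1}([\beta^{(2)}])$, available for some $m$ by injectivity of $\omega_n$, directly to $\chi_{m-1}$. Note also that the paper argues contrapositively from a \emph{single} distinguishing coordinate $m-1$ at base level $n$, which is logically lighter than your plan of recovering the entire sequence $(\omega_{n,k}([\beta]))_k$; but either way, without the construction of $\overline{D}$ (or an equivalent device handling the letters $\ell_j$ with $2j\geqslant m$ whose expansions are killed by the truncation) your proof does not close.
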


\begin{proof} First we show that $\chi$ is well-defined.  By Lemmas~\ref{direct} and \ref{HP-inj} it suffices to show that for any  $[\beta]\in\pi_1(\mathbb{H}_n^+,b_0)$, we have $E_{n-1}(\omega_{n+1,n}(d_{n\#}([\beta])))=\omega_{n,n-1}([\beta])$, making the following diagram commute:
\[\xymatrix{
\mathcal{W}_{2,1}^{+}
  &
\mathcal{W}_{3,2}^{+} \ar[l]_-{E_1}  &
\cdots \ar[l]_-{E_2} &
\mathcal{W}_{n,n-1}^{+} \ar[l]_-{E_{n-2}} &
\mathcal{W}_{n+1,n}^{+} \ar[l]_-{E_{n-1}} &
\cdots \ar[l]  \\
  \pi_1(\mathbb{H}_{2}^{+},b_0) \ar@{^{(}->}[r]_-{d_{2\#}} \ar[u]_-{\omega_{2,1}} &
\pi_1(\mathbb{H}_{3}^{+},b_0) \ar@{^{(}->}[r]_-{d_{3\#}} \ar[u]_-{\omega_{3,2}} &
\cdots \ar@{^{(}->}[r]_-{d_{n-1\#}} &
\pi_1(\mathbb{H}_{n}^{+},b_0) \ar@{^{(}->}[r]_-{d_{n\#}} \ar[u]_-{\omega_{n,n-1}}  &  \pi_1(\mathbb{H}_{n+1}^{+},b_0) \ar[u]_-{\omega_{n+1,n}}  \ar@{^{(}->}[r]
& \cdots \\
}\]
(Recall that the underlying set of a direct limit of groups is the direct limit of the underlying sets.) To this end, put $\beta'=d_{n}\circ \beta$. Then $[\beta']\in\pi_1(\mathbb{H}_{n+1}^+,b_0)$.
By definition of $\omega_{n,n-1}$ and $\omega_{n+1,n}$, respectively, for sufficiently large $k\geqslant 2n+1$, we have (cf.\@ Remark~\ref{LEC})
\[\omega_{n,n-1}([\beta])= R_{n,n-1}\circ R_{n,n}\circ \cdots \circ R_{n,k-1}\circ g_{n,k}([\beta])\]
and
\[\omega_{n+1,n}([\beta'])= R_{n+1,n}\circ R_{n+1,n+1} \circ \cdots \circ R_{n+1,k-1}\circ g_{n+1,k}([\beta']).\]
Noting that $g_{n+1,k}([\beta'])=
h_{n+1,k}\circ r_{n+1,k\#}([\beta'])=h_{n+1,k}\circ r_{n+1,k\#}\circ d_{n\#}([\beta])
=h_{n+1,k}\circ d_{n\#}\circ r_{n,k\#}([\beta])=D_{n}\circ h_{n,k}\circ r_{n,k\#}([\beta])
=D_n\circ g_{n,k}([\beta])$
 and that \[E_{n-1}\circ R_{n+1,n}\circ R_{n+1,n+1} \circ \cdots \circ R_{n+1,k-1}\circ D_n=R_{n,n-1}\circ R_{n,n}\circ \cdots \circ R_{n,k-1}\]
 (see Diagram~(\ref{trapezoid})) we obtain the desired equality:
\begin{eqnarray*}
E_{n-1}(\omega_{n+1,n}([\beta']))&=&E_{n-1}\circ R_{n+1,n}\circ R_{n+1,n+1} \circ \cdots \circ R_{n+1,k-1}\circ g_{n+1,k}([\beta'])\\
&=&R_{n,n-1}\circ R_{n,n}\circ \cdots \circ R_{n,k-1}\circ g_{n,k}([\beta])\\
&=&\omega_{n,n-1}([\beta]).
\end{eqnarray*}

Now we show that $\chi$ is injective. Suppose  $[\alpha^{(1)}]\not=[\alpha^{(2)}]\in \pi_1(\mathbb{P},b_0)$. Choose $n\geqslant 2$ sufficiently large (as in the proof of Lemma~\ref{direct}) so that $\beta^{(s)}([0,1])\subseteq \mathbb{H}_{n}^+$, where $\beta^{(s)}=d_{n-1}\circ d_{n-2} \circ \cdots \circ d_1\circ\alpha^{(s)}$ for $s\in\{1,2\}$.
  Then $\iota_{n\#}([\beta^{(s)}])=[\alpha^{(s)}]$ and $[\beta^{(1)}]\not=[\beta^{(2)}]\in \pi_1(\mathbb{H}^+_n,b_0)$. Hence, there is an $m\geqslant n$ such that $\omega_{n,m-1}([\beta^{(1)}])\not=\omega_{n,m-1}([\beta^{(2)}])$. We may assume that $m$ is even. Put $\gamma^{(s)}=d_{m-1}\circ d_{m-2}\circ \cdots \circ d_n\circ \beta^{(s)}$. Choose $k\geqslant 2(m-1)+1$ sufficiently large, such that for $s\in \{1,2\}$, we have:
\[\xymatrix{ g_{n,k}([\beta^{(s)}]) \ar@{|->}_{R_{n,m-1}\circ \cdots \circ R_{n,k-1}}[d] \ar@{|->}^{D_{m-1}\circ D_{m-2}\circ  \cdots \circ D_n}[rr] & & g_{m,k}([\gamma^{(s)}]) \ar@{|->}^{R_{m,m-1}\circ \cdots \circ R_{m,k-1}}[d] \\
\omega_{n,m-1}([\beta^{(s)}])  & & \hspace{.8in} \omega_{m,m-1}([\gamma^{(s)}])=\chi_{m-1}([\alpha^{(s)}])
}\]

For $n\leqslant j\leqslant m-1$, let $\overline{D}_{j,m-1}:\mathcal{W}_{j,m-1}^{+}\rightarrow  \mathcal{W}_{j+1,m-1}^{+}$ be the monomorphism that  replaces every occurrence of the letter $\ell_{j}$ (respectively $\ell_j^{-1}$)  by $\rho_{j,1}\ell_{2j}\rho_{j,1}^{-1}\rho_{j,2}\ell_{2j+1}\rho_{j,2}^{-1}$ (respectively $\rho_{j,2}\ell_{2j+1}^{-1}\rho_{j,2}^{-1}\rho_{j,1}\ell_{2j}^{-1}\rho_{j,1}^{-1}$) if $2j+1\leqslant m-1$, but instead replaces it by  $\rho_{j,1}\rho_{j,1}^{-1}\rho_{j,2}\rho_{j,2}^{-1}$ (respectively $\rho_{j,2}\rho_{j,2}^{-1}\rho_{j,1}\rho_{j,1}^{-1}$) if $2j\geqslant m$.

     Since each $\overline{D}_{j,m-1}$ with $n\leqslant j\leqslant m-1$ is  injective, so is their composition  $\overline{D}=\overline{D}_{m-1,m-1}\circ \overline{D}_{m-2,m-1}\circ\cdots \circ \overline{D}_{n,m-1}$. Moreover, the following diagram commutes:
\[\xymatrix{
\mathcal{F}_{n,k}^{+} \ar[d]_{R_{n,m-1}\circ \cdots \circ R_{n,k-1}} \ar[rrrr]^-{D_{m-1}\circ D_{m-2}\circ \cdots\circ D_n} &&&&\mathcal{F}_{m,k}^{+} \ar[d]^{R_{m,m-1}\circ \cdots \circ R_{m,k-1}}\\
\mathcal{W}_{n,m-1}^{+} \ar[rrrr]_-{\overline{D}} &&&& \mathcal{W}_{m,m-1}^{+}
}\]
Hence, for $s\in\{1,2\}$, we have
\begin{eqnarray*}
\overline{D}(\omega_{n,m-1}([\beta^{(s)}])) &=&
\overline{D}(R_{n,m-1}\circ \cdots \circ R_{n,k-1}(g_{n,k}([\beta^{(s)}])))\\
 &=&
R_{m,m-1}\circ \cdots \circ R_{m,k-1}\circ D_{m-1}\circ D_{m-2}\circ \cdots\circ D_n(g_{n,k}([\beta^{(s)}]))\\
&=& R_{m,m-1}\circ \cdots \circ R_{m,k-1}(g_{m,k}([\gamma^{(s)}]))\\
&=& \omega_{m,m-1}([\gamma^{(s)}])\\
&=& \chi_{m-1}([\alpha^{(s)}]).
\end{eqnarray*}
Since $\omega_{n,m-1}([\beta^{(1)}])\neq \omega_{n,m-1}([\beta^{(2)}])$ and $\overline{D}$ is injective, we have $\chi_{m-1}([\alpha^{(1)}])\neq \chi_{m-1}([\alpha^{(2)}])$.
\end{proof}

\begin{remark}\label{noalgebra}
Although $\bigcup_{i=1}^\infty (B_{i,1}\cup B_{i,2})\subseteq \mathbb{P}$ is a bouquet of circles that is not homeomorphic to $\mathbb{H}$, one can algebraically set up a commutative diagram as follows:
\[
\xymatrix{
{\mathcal W}^+_{2,1} \ar[d]_{S_{2,1}}& {\mathcal W}^+_{3,2} \ar[l]_{E_1} \ar[d]_{S_{3,2}} & {\mathcal W}^+_{4,3} \ar[l]_{E_2} \ar[d]_{S_{4,3}} & \cdots \ar[l]_{E_3}\\
F^+_{2,1}  & F^+_{3,2} \ar[l]_{J_1} & F^+_{4,3} \ar[l]_{J_2} & \cdots \ar[l]_{J_3}
}
\]
However, there does \underline{not} exist an injective homomorphism \[\pi_1(\mathbb{P},b_0)\hookrightarrow \lim_{\longleftarrow}\left( F^+_{2,1}\stackrel{J_1}{\longleftarrow} F^+_{3,2}\stackrel{J_2}{\longleftarrow} F^+_{4,3}\stackrel{J_3}{\longleftarrow} \cdots \right),\]
because $\pi_1(\mathbb{P},b_0)$ is not residually free (Remark~\ref{NRF}).
\end{remark}

\section{The homotopically path Hausdorff property}\label{sec:HPH}

\begin{definition}[Homotopically path Hausdorff \cite{FRVZ}] \label{DefHPH} A path-connected space $X$ is called {\em homotopically path Hausdorff} if for every two paths $\alpha, \beta:[0,1] \rightarrow X$ with $\alpha(0)=\beta(0)$ and $\alpha(1)=\beta(1)$ such that $\alpha\cdot \beta^-$ is not null-homotopic,
there is a partition $0=t_0<t_1<\cdots < t_n=1$ of $[0,1]$ and open subsets $U_1, U_2, \dots, U_n$ of $X$ with $\alpha([t_{i-1},t_i])\subseteq U_i$ for all $1\leqslant i\leqslant n$ and with the property that if $\gamma:[0,1]\rightarrow X$ is any path with $\gamma([t_{i-1},t_i])\subseteq U_i$ for all $1\leqslant i\leqslant n$ and with $\gamma(t_i)=\alpha(t_i)$ for all $0\leqslant i \leqslant n$, then $\gamma\cdot \beta^-$ is not null-homotopic.
\end{definition}

\begin{remark}\label{T1} We recall from \cite{BFa} that a connected and locally path-connected space $X$ is homotopically path Hausdorff if and only if $\pi_1(X,x)$ is T$_1$ in the quotient topology induced by the compact-open topology on the loop space $\Omega(X,x)$.
\end{remark}

\begin{remark} \label{Y'}
If the natural homomorphism  $\pi_1(X,x) \hookrightarrow \check{\pi}_1(X,x)$ is injective, then $X$ is homotopically path Hausdorff \cite{FRVZ}. However, the converse does not hold in general \cite[Example $Y'$]{FRVZ}.
\end{remark}

\begin{theorem}\label{HPH}
$\mathbb{P}$ is homotopically path Hausdorff.
\end{theorem}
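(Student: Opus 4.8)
The plan is to verify the definition of homotopically path Hausdorff directly, using the injection $\chi:\pi_1(\mathbb{P},b_0)\hookrightarrow \varprojlim(\mathcal{W}^+_{2,1}\stackrel{E_1}{\longleftarrow}\mathcal{W}^+_{3,2}\stackrel{E_2}{\longleftarrow}\cdots)$ of Theorem~\ref{inj} together with the direct-limit description of Lemma~\ref{direct}. Given paths $\alpha,\beta$ with $\alpha\cdot\beta^-$ not null-homotopic, I would first pass to an ambient finite stage: since the images of $\alpha$ and $\beta$ are compact, there is an $n$ with $\alpha([0,1])\cup\beta([0,1])$ missing all $P^\circ_i$ for $i\geqslant n$, so after applying $d_{n-1}\circ\cdots\circ d_1$ the relevant homotopy question takes place inside $\mathbb{H}^+_n$, which (being homeomorphic to $\mathbb{H}$) is a one-dimensional metric space. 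Because $\iota_{n\#}$ is injective (Lemma~\ref{HP-inj}), it is enough to produce the required partition and open sets $U_1,\dots,U_m$ \emph{in $\mathbb{P}$} witnessing that no admissible replacement path $\gamma$ can satisfy $\gamma\cdot\beta^-$ null-homotopic. The natural source of such open sets is the open cover of $\mathbb{P}$ by the sets $P^\circ_k$ together with neighborhoods of $\mathbb{H}$ adapted to the stage $n$; tracking a path through these sets records enough combinatorial data (a word over the alphabet $\mathcal{A}_{n,k}$) to detect the homotopy class via $\omega_{n,k}$.

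The core of the argument is the one-dimensional case, which I expect to invoke in the spirit of Lemma~\ref{cancellinglemma} and the known fact that $\mathbb{H}$ (hence $\mathbb{H}^+_n$) is homotopically path Hausdorff. Concretely: in the one-dimensional metric space $\mathbb{H}^+_n$, take reduced representatives and use that for any two non-path-homotopic paths there is a finite initial segment of their reduced forms along which they must already differ, and this difference is visible after passing to some $\mathbb{H}^+_{n,k}$ (a finite wedge of circles), where it is detected by which circles $C_i$ the path traverses and in what cyclic pattern. One then chooses the partition $0=t_0<\cdots<t_m=1$ of $[0,1]$ and open sets $U_i$ in $\mathbb{H}^+_n$ so that the combinatorial pattern of $\alpha$ relative to these sets forces any compatible $\gamma$ to have the same reduced word in $\pi_1(\mathbb{H}^+_{n,k},b_0)$ modulo the part that doesn't affect $[\gamma\cdot\beta^-]$; the standard ``homotopically path Hausdorff'' verification for one-dimensional spaces (as in \cite{BFa} or via \cite{E2002}) supplies exactly this. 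Finally I pull the open sets $U_i\subseteq\mathbb{H}^+_n$ back to open sets of $\mathbb{P}$ by taking, for each $i$, an open $\widetilde{U}_i\subseteq\mathbb{P}$ with $\widetilde{U}_i\cap\mathbb{H}^+_n=U_i$ and arranged (using the deformation retractions $\phi_1,\dots,\phi_{n-1}$ and properties (i)--(iv)) so that the retraction $d_{n-1}\circ\cdots\circ d_1:\mathbb{P}\to\mathbb{H}^+_n$ carries $\widetilde U_i$ into $U_i$; then any path $\gamma$ in $\mathbb{P}$ admissible for $(\widetilde U_i)$ maps to a path in $\mathbb{H}^+_n$ admissible for $(U_i)$ with the same endpoint data, and non-triviality of $\gamma\cdot\beta^-$ in $\mathbb{P}$ follows from non-triviality of its image in $\mathbb{H}^+_n$ together with injectivity of $\iota_{n\#}$.

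The main obstacle I anticipate is the bookkeeping in the last step: $d_{n-1}\circ\cdots\circ d_1$ is only a homotopy-class-preserving deformation, not a retraction onto $\mathbb{H}^+_n$ pointwise compatible with arbitrary open covers, so I must be careful that the open sets $\widetilde U_i$ genuinely control a path in $\mathbb{P}$ and not just its image after retraction — in particular, a replacement path $\gamma$ in $\mathbb{P}$ could wander into some $P^\circ_k$ and acquire extra winding around $C_{2k}$ or $C_{2k+1}$. This is handled by choosing the $\widetilde U_i$ small enough near $b_0$ (using the neighborhood basis refinements $N^i_{k,n}$ as in the proof of Theorem~\ref{SHHthm}) that any loop contributed inside a pair of pants lies in the Spanier-type subgroup $\iota_\#\pi_1(\mathbb{H},b_0)$, which, by relation~(\ref{relation}) and the combinatorial analysis via $\chi$, cannot change the stabilized word sequence that distinguishes $[\gamma\cdot\beta^-]$ from $1$. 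A clean way to package all of this is to phrase the whole proof in terms of the injective function $\chi$: the partition and open sets are chosen precisely so that the finite word $\chi_{m-1}$ (or rather its value at a suitable stage) of any admissible $\gamma\cdot\beta^-$ is forced to equal that of $\alpha\cdot\beta^-$, which is nonempty; then injectivity of $\chi$ finishes the argument.
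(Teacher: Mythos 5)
Your skeleton is recognizably the paper's (pass to a finite stage $\mathbb{H}^+_n$, detect the class at a finite level via $\mathbb{H}^+_{n,k}$, pull the controlling open sets back to $\mathbb{P}$, finish with injectivity of $\iota_{n\#}$ or of $\chi$), but the decisive step --- controlling what an admissible replacement loop $\gamma$ can do near $b_0$ --- has a genuine gap, and the patch you propose is false. First, $d_{n-1}\circ\cdots\circ d_1$ maps $\mathbb{P}$ onto $\mathbb{P}^+_n$, not onto $\mathbb{H}^+_n$ (no retraction of $\mathbb{P}$ onto $\mathbb{H}^+_n$ exists: applying one to Equation~(\ref{relation}) would exhibit $[\ell_n]$ as a product of conjugates of $[\ell_{2n}]$, $[\ell_{2n+1}]$ in $\pi_1(\mathbb{H}^+_n,b_0)$, which fails already in $\pi_1(\mathbb{H}^+_{n,k},b_0)$), so the image of $\gamma$ can still run through the pants $P_m$, $m\geqslant n$; pushing it into $\mathbb{H}^+_n$ rel the partition points requires the collar retractions $V_j\rightarrow W_j$ as in the proof of Theorem~\ref{SHHthm}, which is precisely why the pulled-back sets must have that special form --- the bookkeeping you flag but do not carry out. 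Second, and more seriously, your mechanism for the extra winding is wrong: loops inside a pair of pants retract into $\mathbb{H}$ only when they stay in a boundary collar, and in any case elements of $\iota_\#\pi_1(\mathbb{H},b_0)$ certainly \emph{can} change the stabilized word sequence --- $\chi([\ell_N])\neq\chi(1)$ because $\chi$ is injective (Theorem~\ref{inj}) and $[\ell_N]\neq 1$ (Lemma~\ref{HP-inj}). Consequently your ``clean packaging'' --- choose the partition and open sets so that $\chi_{m-1}$ of every admissible $\gamma\cdot\beta^-$ \emph{equals} that of $\alpha\cdot\beta^-$ --- is unachievable: every open set of $\mathbb{P}$ containing $b_0$ contains all circles $C_j$ with $j$ large together with annular collars in infinitely many pants, so an admissible $\gamma$ may insert essential loops on high-index circles; the corresponding letters $\ell_j^{\pm 1}$ are eventually deleted, but their presence blocks cancellations, so $\omega_{n,n-1}$ of the modified loop can acquire extra non-cancelling pairs $\rho_{i,j}\rho_{i,j}^{-1}$ and need not equal $\omega_{n,n-1}([\beta])$.

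What can be forced, and what the paper forces, is agreement at one fixed finite level: refining a cover $\mathcal{B}$ of the finite bouquet $\mathbb{H}^+_{n,k}$ (chosen so that $\mathcal{B}$-close loops are homotopic) through $r_{n,k}^{-1}$ and the collars yields $g_{n,k}([\gamma'])=g_{n,k}([\beta])$ after homotoping $\gamma$ into $\mathbb{H}^+_n$. The finish is then not equality of stabilized words but monotonicity: the pure deletion maps $R_{n,j}$ remove no more letters than the cancelling maps $T_{n,j}$, so $\omega_{n,n-1}([\gamma'])$ has at least as many letters as $\omega_{n,n-1}([\beta])$, hence is non-empty, hence $[\gamma]\neq 1$. (Alternatively --- close to the one sentence of your proposal that is on target --- since $g_{n,k}$ is a homomorphism into a free group and $g_{n,k}([\beta])\neq 1$, the level-$k$ agreement already gives $[\gamma']\neq 1$ in $\pi_1(\mathbb{H}^+_n,b_0)$, and Lemma~\ref{HP-inj} finishes; one can also run this by quoting the homotopically path Hausdorff property of the one-dimensional space $\mathbb{H}^+_n$ for suitably standard witnessing sets. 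But the argument you actually wrote, resting on forced $\chi$-equality and on pants-loops being invisible to $\chi$, does not go through.)
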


\begin{proof} Let $1\not=[\alpha]\in \pi_1(\mathbb{P},b_0)$. We wish to find a partition $0=t_0<t_1<\cdots<t_s=1$ and open subsets $U_1, U_2, \dots, U_s\subseteq \mathbb{P}$ with $\alpha([t_{i-1},t_i])\subseteq U_i$ for all $1\leqslant i \leqslant s$ such that the following property holds: if $\gamma:[0,1]\rightarrow \mathbb{P}$ is any loop with $\gamma(t_i)=\alpha(t_i)$ for all $0\leqslant i\leqslant s$ and $\gamma([t_{i-1},t_i])\subseteq U_i$ for all $1\leqslant i \leqslant s$, then $[\gamma]\not=1\in\pi_1(\mathbb{P},b_0)$. (This property is preserved if we add a subdivision point $t'\in (t_{i-1},t_i)$ and choose $U'=U_i$. Therefore, checking this statement for all essential loops $\alpha$  based at $b_0$, validates Definition~\ref{DefHPH} for all essential loops $\alpha\cdot \beta^-$.)

By Theorem~\ref{inj}, there is an $n\in \mathbb{N}$ such that for $\beta=d_{n-1}\circ d_{n-2}\circ \cdots \circ d_1 \circ \alpha$, we have $\beta([0,1])\subseteq \mathbb{H}^+_n$ and $\chi_{n-1}([\alpha])=\omega_{n,n-1}([\beta])\in {\mathcal W}^+_{n,n-1}$ is not the empty word. Choose $k\in \mathbb{N}$ sufficiently large, so that
\[R_{n,n-1}\circ R_{n,n}\circ \cdots \circ R_{n,k-1}\circ g_{n,k}([\beta])=\omega_{n,n-1}([\beta]).\]

 Consider $\beta:[0,1]\rightarrow \mathbb{H}^+_n\subseteq \mathbb{P}^+_n$ and $r_{n,k}:\mathbb{H}^+_n\rightarrow \mathbb{H}^+_{n,k}$.
Since $\mathbb{H}^+_{n,k}$ is a finite bouquet of circles, there is an open cover $\mathcal B$ of $\mathbb{H}^+_{n,k}$
 with the following property: if $\eta,\tau:([0,1],\{0,1\})\rightarrow (\mathbb{H}^+_{n,k},b_0)$ are two loops that are $\mathcal B$-close, i.e., if for every $t\in [0,1]$, there is a $B\in {\mathcal B}$ with  $\{\eta(t),\tau(t)\}\subseteq B$, then $[\eta]=[\tau]\in \pi_1(\mathbb{H}^+_{n,k},b_0)$. Choose an open cover $\{W_j \mid j\in J\}$ of $\mathbb{H}^+_n$ and a cover $\{V_j\mid j\in J\}$ of $\mathbb{H}^+_n$ by open subsets of $\mathbb{P}^+_n$ such that $\{W_j \mid j\in J\}$ refines $\{(r_{n,k})^{-1}(B)\mid B\in {\mathcal B}\}$ and such that each $V_j$ deformation retracts onto $W_j$. (See proof of Theorem~\ref{SHHthm}.) Choose a partition $0=t_0<t_1<\cdots<t_s=1$ and indices $j_1, j_2, \dots , j_s\in J$ such that $\beta([t_{i-1},t_i])\subseteq V_{j_i}$ for $1\leqslant i \leqslant s$. Put  $U_i=(d_{n-1}\circ d_{n-2}\circ \cdots \circ d_1)^{-1}(V_{j_i})$ for $1\leqslant i \leqslant s$.

Now, let $\gamma:[0,1]\rightarrow \mathbb{P}$ be a loop with $\gamma(t_i)=\alpha(t_i)$ for all $0\leqslant i\leqslant s$ and $\gamma([t_{i-1},t_i])\subseteq U_i$ for all $1\leqslant i \leqslant s$. Then $\gamma:[0,1]\rightarrow \mathbb{P}$ is homotopic (relative to its
endpoints) to a loop $\gamma':[0,1]\rightarrow \mathbb{H}^+_n\subseteq \mathbb{P}$ such that $r_{n,k}\circ \gamma':[0,1]\rightarrow \mathbb{H}^+_{n,k}$ and $r_{n,k}\circ \beta:[0,1]\rightarrow \mathbb{H}^+_{n,k}$ are $\mathcal B$\nobreakdash-close. Hence, $[r_{n,k}\circ \gamma']=[r_{n,k}\circ \beta]\in \pi_1(\mathbb{H}^+_{n,k},b_0)$, so that $g_{n,k}([\gamma'])=h_{n,k}([r_{n,k}\circ \gamma'])=h_{n,k}([r_{n,k}\circ \beta])=g_{n,k}([\beta])$.
Choose $m\geqslant k$ sufficiently large, so that
\[R_{n,n-1}\circ R_{n,n}\circ \cdots \circ R_{n,k-1}\circ R_{n,k}\circ \cdots \circ R_{n,m-1}\circ g_{n,m}([\gamma'])=\omega_{n,n-1}([\gamma']),\] which, as a word in ${\mathcal W}^+_{n,n-1}$,  has at least as many letters as
\begin{eqnarray*} & & R_{n,n-1}\circ R_{n,n}\circ \cdots \circ R_{n,k-1}\circ T_{n,k}\circ \cdots \circ T_{n,m-1}\circ g_{n,m}([\gamma'])\\&=&R_{n,n-1}\circ R_{n,n}\circ \cdots \circ R_{n,k-1}\circ g_{n,k}([\gamma'])\\&=&R_{n,n-1}\circ R_{n,n}\circ \cdots \circ R_{n,k-1}\circ g_{n,k}([\beta])\\&=&\omega_{n,n-1}([\beta]).
\end{eqnarray*}
Hence, $\chi_{n-1}([\gamma'])=\omega_{n,n-1}([\gamma'])$ is not the empty word. We conclude that $\chi([\gamma])=\chi([\gamma'])$ is not trivial so that $[\gamma]\not=1\in\pi_1(\mathbb{P},b_0)$.
\end{proof}

\section{The 1-UV$_0$ property}

 \begin{definition}[1-UV$_0$ \cite{CMRZZ}]\label{small}
   We say that $X$ is {\em 1-UV$_0$ at} $x\in X$ if for every  neighborhood $U$ of $x$ in $X$, there is an open subset $V$ in $X$ with $x\in V\subseteq U$  such that for every map $f:D^2\rightarrow X$ from the unit disk with $f(S^1)\subseteq V$ there is a map $g:D^2\rightarrow U$ with $f|_{S^1}=g|_{S^1}$.

   We say that $X$ is {\em 1-UV$_0$} if $X$ is 1-UV$_0$ at every point $x\in X$.
 \end{definition}

\begin{proposition}\label{puv}
All one-dimensional spaces and all planar spaces are 1-UV$_0$.
\end{proposition}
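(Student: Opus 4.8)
The plan is to establish the two cases — one-dimensional spaces and planar spaces — somewhat separately, since the available structural tools differ. For the one-dimensional case, the natural route is to invoke the known fact (due to Curtis–Fort and, in the relevant generality, part of the body of work on one-dimensional Peano continua and their non-compact analogues) that a one-dimensional space $X$ is aspherical and, more to the point for us, that every map $f:S^1\to X$ that is null-homotopic in $X$ bounds a disk whose image is contained in $f(S^1)$ together with an arbitrarily small neighborhood of the relevant point. Concretely, I would argue: fix $x\in X$ and a neighborhood $U$ of $x$; by one-dimensionality every null-homotopic loop in a small enough $V\ni x$ is already null-homotopic \emph{within} $V$ (this is the strong form of semilocal simple connectivity for $1$-dimensional spaces, using that the fundamental group of a one-dimensional space injects into an inverse limit of free groups and that small loops map to the trivial element at a suitable stage). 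Since a null-homotopic loop in a one-dimensional space contracts within its own image union a neighborhood, choosing $V\subseteq U$ appropriately lets the contracting disk land in $U$. The key input here is the local contractibility behavior of one-dimensional spaces, which is precisely the kind of statement the paper has been citing from \cite{CF1959b, E2002}.

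For the planar case, the cleanest approach uses the Schoenflies-type structure of planar sets: a planar space $X\subseteq\mathbb{R}^2$ has the property that a loop $f:S^1\to X$ which is null-homotopic in $X$ must already be null-homotopic in a controlled neighborhood, because the only way a loop in the plane can fail to bound is by enclosing a point of the complement $\mathbb{R}^2\setminus X$, and ``enclosing'' is detected by winding numbers, which are locally stable. So I would: given $x\in X$ and a neighborhood $U$, pick a small disk-neighborhood $B$ of $x$ in $\mathbb{R}^2$ with $B\cap X\subseteq U$, and set $V=B^{\circ}\cap X$. If $f:S^1\to X$ has $f(S^1)\subseteq V$ and is null-homotopic in $X$, then $f$ has winding number $0$ about every point of $\mathbb{R}^2\setminus X$; since $f(S^1)$ lies in the small disk $B$, the disk $D^2$ can be mapped by a straight-line (radial) null-homotopy into $B$, and one then pushes the image off the finitely-many ``problematic'' complementary components it might cover — but winding number $0$ about all of $\mathbb{R}^2\setminus X$ means no such component is enclosed, so the radial disk already avoids $\mathbb{R}^2\setminus X$ and hence lands in $B\cap X\subseteq U$. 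This reduces matters to the classical fact that in the plane a loop bounds a disk missing a given point iff its winding number about that point is zero, combined with the characterization of null-homotopy in planar sets via complementary components, which is exactly the content of \cite{FZ2005}.

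The step I expect to be the genuine obstacle is making the planar argument honest about \emph{which} complementary components matter and about continuity at the bad point $x$: a planar space need not be locally nice, and $\mathbb{R}^2\setminus X$ can have infinitely many components accumulating at $x$, so one cannot literally ``push off finitely many components'' without care. The fix is to use that a null-homotopic loop $f$ in a planar space has zero winding number about \emph{all} of $\mathbb{R}^2\setminus X$ (this is the substantive input, equivalent to $\pi_1(X,x)\hookrightarrow\check\pi_1(X,x)$ being injective for planar $X$), so the radial contraction of $f$ inside the small disk $B$ automatically misses $\mathbb{R}^2\setminus X$ entirely — no pushing is needed, only the observation that the radial homotopy of a loop of winding number $0$ about a point stays away from that point. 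Dually, in the one-dimensional case the analogous delicate point is that ``null-homotopic in $X$'' must be upgraded to ``null-homotopic in a small neighborhood,'' which follows from the injectivity of $\pi_1$ into an inverse limit of free groups (so that a small loop dying in $X$ already dies at some finite stage, hence in a small neighborhood). Once these two winding-number / inverse-limit facts are in hand, choosing $V$ inside $U$ so that the contracting disk is forced into $U$ is routine.
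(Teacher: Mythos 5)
Your planar argument has a genuine gap at its key step: the claim that ``the radial homotopy of a loop of winding number $0$ about a point stays away from that point'' is false. Take $z=(0,0)$ and let $f$ traverse the unit circle once counterclockwise and then once clockwise; its winding number about $z$ is $0$ (indeed it is null-homotopic in $\mathbb{R}^2\setminus\{z\}$, even within its own image), yet the straight-line contraction to any point $c$ of its image sweeps out all segments from the circle to $c$ — for $c=(1,0)$ the segment from $(-1,0)$ passes through $z$. So vanishing winding numbers about $\mathbb{R}^2\setminus X$ give no control whatsoever over where the radial contraction goes, and the proposed homotopy can leave $X$ entirely. This cannot be repaired by a winding-number bookkeeping argument: what is actually needed is the nontrivial fact, which is exactly what the paper cites, that in a planar set every null-homotopic loop admits a contraction \emph{inside the set} whose image has diameter equal to that of the loop \cite[Lemma~13]{FZ2005}. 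With that lemma in hand the proof is immediate: given $U$, choose $\epsilon>0$ with $B(x,3\epsilon)\cap X\subseteq U$ and set $V=B(x,\epsilon)\cap X$; any null-homotopic loop in $V$ then contracts within a set of diameter at most $\operatorname{diam}(V)\leqslant 2\epsilon$ meeting $V$, hence within $U$.

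In the one-dimensional case your conclusion is right but the route is shakier than it needs to be: the inference ``$\pi_1$ injects into an inverse limit of free groups, so a small loop dying in $X$ dies at a finite stage, hence in a small neighborhood'' does not follow, since the finite stages of that inverse system are quotients/retracts of the whole space, not small neighborhoods of $x$. The clean statement, and the one the paper uses, is that in a one-dimensional space every null-homotopic loop contracts within its own image \cite[Lemma~2.2]{CF1959}; with it one may simply take $V=U$ (or any open $V$ with $x\in V\subseteq U$), since the contraction of $f|_{S^1}$ then lies in $f(S^1)\subseteq V\subseteq U$. So the one-dimensional half is essentially correct once phrased via that lemma, while the planar half needs the diameter-controlled contraction lemma rather than the radial/winding-number argument.
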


\begin{proof}
In a one-dimensional space, every null-homotopic loop contracts within its own image \cite[Lemma~2.2]{CF1959}. In a planar space, every null-homotopic loop has a contraction whose diameter equals that of the image of the loop \cite[Lemma~13]{FZ2005}.
\end{proof}

\begin{theorem}\label{UVThm}
  $\mathbb{P}$ is 1-UV$_0$.
\end{theorem}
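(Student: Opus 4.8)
The plan is to verify the 1-UV$_0$ property one point at a time; the only point that requires real work is $b_0$. At any $x\neq b_0$, the space $\mathbb{P}$ coincides near $x$ with the $2$-manifold-with-boundary $\mathbb{P}\setminus\{b_0\}$, which is locally contractible; so given an open neighborhood $U$ of $x$ I would simply choose a contractible open $V$ with $x\in V\subseteq U\setminus\{b_0\}$, and then any loop in $V$ bounds a disk in $V\subseteq U$.

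For the point $b_0$: given an open neighborhood $U$ of $b_0$, I would build the required neighborhood $V$ by collaring the attached pants so that $V$ deformation retracts onto a neighborhood of $b_0$ in the one-dimensional space $\mathbb{H}$. Set $W=U\cap\mathbb{H}$, an open neighborhood of $b_0$ in $\mathbb{H}$ (so $W\supseteq\mathbb{H}_N$ for some $N$). For each $k$, the set $U\cap P_k$ is open in $P_k$ with boundary trace $U\cap\partial P_k=f_k^{-1}(W)$; using collar neighborhoods of the (arc- or circle-)components of $f_k^{-1}(W)$ inside $U\cap P_k$, choose an open $V_k\subseteq U\cap P_k$ with $V_k\cap\partial P_k=f_k^{-1}(W)$ that deformation retracts onto $V_k\cap\partial P_k$. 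Let $V$ be the union of $W$ with the images of the $V_k$ in $\mathbb{P}$. A check of preimages under the quotient map $\mathbb{H}\sqcup\coprod_k P_k\to\mathbb{P}$ shows that $V$ is open in $\mathbb{P}$, that $b_0\in V\subseteq U$, and that $V\cap\mathbb{H}=W$. Since each retraction of $V_k$ is the identity along $V_k\cap\partial P_k$, these retractions glue with the identity on $W$; continuity of the resulting deformation retraction $\Phi\colon V\times[0,1]\to V$ of $V$ onto $W$ follows because $[0,1]$ is locally compact, so the relevant quotient map remains a quotient map after multiplication by $[0,1]$.

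Now let $f\colon D^2\to\mathbb{P}$ be any map with $f(S^1)\subseteq V$ and put $\gamma=f|_{S^1}$, a loop that is (freely) null-homotopic in $\mathbb{P}$. Then $(s,t)\mapsto\Phi(\gamma(s),t)$ is a free homotopy inside $V\subseteq U$ from $\gamma$ to the loop $\gamma'=\Phi(\gamma(\cdot),1)$, whose image lies in $W\subseteq\mathbb{H}$. The loop $\gamma'$ is still freely null-homotopic in $\mathbb{P}$; since $\iota_\#\colon\pi_1(\mathbb{H},b_0)\to\pi_1(\mathbb{P},b_0)$ is injective (Lemma~\ref{HP-inj} with $n=1$) and $\mathbb{H}$ is path-connected, $\pi_1(\mathbb{H})$ injects into $\pi_1(\mathbb{P})$ at the basepoint $\gamma'(0)$ as well, so $\gamma'$ is null-homotopic in $\mathbb{H}$. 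As $\mathbb{H}$ is one-dimensional, $\gamma'$ contracts within its own image $\gamma'([0,1])\subseteq W\subseteq U$ (cf.\ the proof of Proposition~\ref{puv}); that is, there is a map $g'\colon D^2\to U$ with $g'|_{S^1}=\gamma'$. Gluing $g'$ to the annulus carrying the homotopy $\gamma\simeq\gamma'$ in $U$ produces $g\colon D^2\to U$ with $g|_{S^1}=f|_{S^1}$. Hence $\mathbb{P}$ is 1-UV$_0$ at $b_0$, which finishes the proof.

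The step that needs genuine care is the construction of $V$ together with the verification that it deformation retracts onto $V\cap\mathbb{H}$: one must fit each collar $V_k$ inside $U\cap P_k$, match it correctly with $W$ along $\partial P_k$ so that $V$ is actually open in $\mathbb{P}$, and keep the glued deformation retraction continuous at $b_0$, where infinitely many pants meet. Once the loop has been pushed into $\mathbb{H}$, the remainder is soft: injectivity of $\iota_\#$ upgrades ``null-homotopic in $\mathbb{P}$'' to ``null-homotopic in $\mathbb{H}$'', and one-dimensionality of $\mathbb{H}$ then supplies a filling disk that automatically lies in the loop's image, hence in $U$.
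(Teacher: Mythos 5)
Your proposal is correct and follows essentially the same route as the paper: build an open $V\subseteq U$ from collar neighborhoods in the pants so that $V$ deformation retracts onto a neighborhood of $b_0$ in $\mathbb{H}$, push the boundary loop into $\mathbb{H}$, use the injectivity of $\iota_\#$ (Lemma~\ref{HP-inj}) to upgrade null-homotopy in $\mathbb{P}$ to null-homotopy in $\mathbb{H}$, and contract within the loop's image by one-dimensionality. The only cosmetic differences are that the paper first shrinks $U\cap\mathbb{H}$ to a basic neighborhood $U_m$ (so each $f_k^{-1}(U_m)$ has just three components, simplifying the collar construction) and works with loops based at $b_0$ rather than with free homotopies and a basepoint change.
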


\begin{proof}
It suffices to show that $\mathbb{P}$ is 1-UV$_0$ at $b_0$. Let $U$ be an open neighborhood of $b_0$ in $\mathbb{P}$.

Recall  $U_n$ and $f^{-1}_k(U_n)=L^0_{k,n}\cup L^1_{k,n} \cup L^2_{k,n}\subseteq \partial P_k$ from the proof of Theorem~\ref{SHHthm}. Fix $m\in \mathbb{N}$ such that $b_0\in U_m\subseteq U\cap \mathbb{H}$. For each $k\in \mathbb{N}$, choose three pairwise disjoint open neighborhoods $M^0_k,M^1_k,M^2_k$ of $L^0_{k,m},L^1_{k,m},L^2_{k,m}$ in $P_k$, respectively,  such that, for $i\in \{0,1,2\}$,  $M^i_k\cap \partial P_k=L^i_{k,m}$,  $M^i_k\cap P^\circ_k\subseteq U\cap P^\circ_k$, and $M^i_k$  deformation retracts onto $L^i_{k,m}$.
Define $V'_{k,m}= M^0_k\cup M^1_k\cup M^2_k$ and $V=\bigcup_{k\in\mathbb{N}} f_k(V'_{k,m})$. Then $V$ is an open subset of $\mathbb{P}$ with $b_0\in V\subseteq U$.

Let $f:D^2\rightarrow \mathbb{P}$ be a map with $f(S^1)\subseteq V$.
We will show that $\alpha=f|_{S^1}$ contracts within $V$. Since $V$ is path connected, we may assume that $\alpha$ is a loop based at $b_0$ and  show that $[\alpha]=1\in \pi_1(V,b_0)$. Since $V$ deformation retracts onto $U_m\subseteq\mathbb{H}$, we may assume that $\alpha$ lies in $\mathbb{H}$. Since $[\alpha]=1\in \pi_1(\mathbb{P},b_0)$, we have $[\alpha]=1\in \pi_1(\mathbb{H},b_0)$ by Lemma~\ref{HP-inj}. As $\mathbb{H}$ is one-dimensional, this implies that $\alpha$ contracts within its own image.
\end{proof}

\section{Generalized covering projections and the homotopically Hausdorff property}\label{review}

In this section, we briefly review  generalized covering projections. Let $X$ be a path-connected topological space and $H \leqslant \pi_1(X,x_0)$. Even if there is no classical covering projection $p:(\widetilde{X},\widetilde{x})\rightarrow (X,x_0)$ with $p_\#\pi_1(\widetilde{X},\widetilde{x})=H$ (see Remark~\ref{classical}), there might be a {\em generalized} one:

\begin{definition}[Generalized covering projection \cite{B,FZ2007}]
Let $X$ be a path-connected topological space. We call a map $q:\widehat{X}\rightarrow X$ a {\em generalized covering projection} if $\widehat{X}$ is nonempty, connected and locally path connected and if for every $\widehat{x}\in \widehat{X}$, for every connected and locally path-connected space $Y$, and for every map $f:(Y,y)\rightarrow (X,q(\widehat{x}))$ with $f_\#\pi_1(Y,y)\leqslant  q_\#\pi_1(\widehat{X},\widehat{x})$, there is a unique map $g:(Y,y)\rightarrow (\widehat{X},\widehat{x})$ such that $q\circ g=f$.
\end{definition}

\begin{remark} Suppose $q:(\widehat{X},\widehat{x})\rightarrow (X,x_0)$ is a generalized covering projection. Then $q_\#:\pi_1(\widehat{X},\widehat{x})\rightarrow \pi_1(X,x_0)$ is injective. If we put $K=q_\#\pi_1(\widehat{X},\widehat{x})$, then $q:\widehat{X}\rightarrow X$ is characterized as usual, up to equivalence, by the conjugacy class of $K$ in $G=\pi_1(X,x_0)$. Moreover, $Aut(\widehat{X}\stackrel{q}{\rightarrow} X)\cong N_G(K)/K$, where $N_G(K)$ denotes the normalizer of $K$ in $G$. (The standard arguments apply \cite[2.3.5 \& 2.6.2]{Spanier}.)
\end{remark}

If it exists, a generalized covering projection can be obtained in the standard way:
   On the set of all paths $\alpha:([0,1],0)\rightarrow (X,x_0)$ consider the equivalence relation $\alpha\sim \beta$ if and only if $\alpha(1)=\beta(1)$ and $[\alpha\cdot \beta^-]\in H$. Denote the equivalence class of $\alpha$ by $\left<\alpha\right>$ and denote the set of all equivalence classes by $\widetilde{X}_H$. Let $\widetilde{x}_0$ denote the class containing the constant path at $x_0$. We give $\widetilde{X}_H$  the topology generated by basis elements of the form $\left<\alpha,U\right>=\{\left<\alpha\cdot \gamma \right> \mid \gamma:([0,1],0)\rightarrow (U,\alpha(1))\}$, where $U$ is an open subset of $X$ and $\left<\alpha\right>\in \widetilde{X}_H$ with $\alpha(1)\in U$. Then $\widetilde{X}_H$ is connected and locally path connected and the endpoint projection $p_H:\widetilde{X}_H\rightarrow X$, defined by $p_H(\left<\alpha\right>)=\alpha(1)$, is a continuous surjection. Moreover, the map $p_H:\widetilde{X}_H\rightarrow X$ is  open  if and only if $X$ is locally path connected.

If $p_H:\widetilde{X}_H\rightarrow X$ has unique path lifting, then it is a generalized covering projection and, for every $\left<\alpha\right>\in \widetilde{X}_H$,
$(p_H)_\#:\pi_1(\widetilde{X}_H,\left<\alpha\right>)\rightarrow \pi_1(X,\alpha(1))$ is a monomorphism onto $[\alpha^-]H[\alpha]$; in particular $(p_H)_\#\pi_1(\widetilde{X}_H,\widetilde{x}_0)=H$ \cite{FZ2007}.

If $X$ admits a generalized covering projection $q:(\widehat{X},\widehat{x})\rightarrow (X,x_0)$ such that $q_\#\pi_1(\widehat{X},\widehat{x})=H$, then there is a homeomorphism $h:(\widehat{X},\widehat{x})\rightarrow (\widetilde{X}_H,\widetilde{x}_0)$ with $p_H\circ h=q$ \cite{B}.

\begin{remark}\label{HH} For  $p_H:\widetilde{X}_H\rightarrow X$ to have unique path lifting, every fiber $p_H^{-1}(x)$ with $x\in X$ must be $T_1$, but not necessarily discrete \cite{FZ2007}. Moreover, T$_1$ fibers are not sufficient \cite{BFi,VZ}. Note that these fibers are T$_1$ if and only if for every $x\in X$, \[\bigcup_{\alpha(1)=x}\;\; \bigcap_{U\in {\mathcal T}_x} H\pi(\alpha,U)=H.\]
\end{remark}

\begin{definition}[Homotopically Hausdorff rel.\@ $H$ \cite{FZ2007}]
We call $X$ {\em homotopically Hausdorff relative to} $H\leqslant \pi_1(X,x_0)$, if every fiber of $p_H:\widetilde{X}_H\rightarrow X$ is $T_1$. We call $X$ {\em homotopically Hausdorff} if it is homotopically Hausdorff relative to $H=\{1\}$.
\end{definition}

\begin{remark}
If $X$ is strongly homotopically Hausdorff, then $X$ is homotopically Hausdorff. (Compare the formula of Remark~\ref{HH} with that in Definition~\ref{SHH}.)
\end{remark}

\begin{remark}\label{NormalNotHH}
There are normal subgroups $H\trianglelefteqslant \pi_1(\mathbb{H},b_0)$, such that $\mathbb{H}$ is not homotopically Hausdorff relative to $H$. For example, $\mathbb{H}$ is not homotopically Hausdorff relative to the commutator subgroup of $\pi_1(\mathbb{H},b_0)$ \cite[Example~3.10]{BFi}.
\end{remark}

We abbreviate $\widetilde{X}_{\{1\}}$ by $\widetilde{X}$ and $p_{\{1\}}:\widetilde{X}_{\{1\}}\rightarrow X$ by $p:\widetilde{X}\rightarrow X$.
 Moreover, note that if $H=\{1\}$, then $\left<\alpha\right>=[\alpha]$.

\begin{remark}\label{HPH->UPL}
If $X$ is homotopically path Hausdorff, then  $p:\widetilde{X}\rightarrow X$ has unique path lifting \cite{FRVZ}.
\end{remark}

\begin{remark}\label{1UV->UPL} If $X$ is path connected, 1-UV$_0$ and metrizable, then $p:\widetilde{X}\rightarrow X$ has unique path lifting \cite{BFi}.
\end{remark}

\begin{theorem}\label{Ucov} There exists a generalized covering projection $p:\widetilde{\mathbb{P}}\rightarrow \mathbb{P}$ with $\pi_1(\widetilde{\mathbb{P}},\widetilde{b}_0)=\{1\}$.
\end{theorem}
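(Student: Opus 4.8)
The plan is to run the standard construction recalled in Section~\ref{review} with the trivial subgroup $H=\{1\}$ and then verify the one nontrivial hypothesis, namely unique path lifting. Concretely, let $\widetilde{\mathbb{P}}=\widetilde{\mathbb{P}}_{\{1\}}$ be the set of path-homotopy classes $[\alpha]$ of paths $\alpha:([0,1],0)\to(\mathbb{P},b_0)$, equipped with the topology generated by the basic sets $\left<\alpha,U\right>$, with basepoint $\widetilde{b}_0=[c_{b_0}]$ the class of the constant path, and let $p=p_{\{1\}}:\widetilde{\mathbb{P}}\to\mathbb{P}$ be the endpoint projection $p([\alpha])=\alpha(1)$. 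Since $\mathbb{P}$ is connected and locally path connected, $\widetilde{\mathbb{P}}$ is nonempty, connected and locally path connected, and $p$ is a continuous open surjection.

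It remains only to establish unique path lifting for $p$. For this I would invoke that $\mathbb{P}$ is homotopically path Hausdorff, which is exactly Theorem~\ref{HPH}; by Remark~\ref{HPH->UPL} this forces $p:\widetilde{\mathbb{P}}\to\mathbb{P}$ to have unique path lifting. (An alternative route would pass to the metrizable model $\mathbb{P}^\ast$ and combine Theorem~\ref{UVThm} with Remark~\ref{1UV->UPL}, but this requires transporting the conclusion back along the homotopy equivalence $\mathbb{P}\simeq\mathbb{P}^\ast$, whereas the homotopically path Hausdorff route applies to $\mathbb{P}$ directly.)

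With unique path lifting in hand, the machinery recalled in Section~\ref{review} closes the argument: $p:\widetilde{\mathbb{P}}\to\mathbb{P}$ is a generalized covering projection, and $p_\#:\pi_1(\widetilde{\mathbb{P}},\widetilde{b}_0)\to\pi_1(\mathbb{P},b_0)$ is a monomorphism with image $[c_{b_0}^-]\{1\}[c_{b_0}]=\{1\}$. Hence $\pi_1(\widetilde{\mathbb{P}},\widetilde{b}_0)=\{1\}$, as claimed.

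As for difficulty: essentially no obstacle remains at this stage. The substantive work has already been carried out in showing that $\mathbb{P}$ is homotopically path Hausdorff (Theorem~\ref{HPH}), which itself rested on the injection $\chi$ of Theorem~\ref{inj}; the present statement is a routine assembly of that property with the standard generalized-covering-space theory of Section~\ref{review}. The only minor point to be attentive to is the identification $\left<\alpha\right>=[\alpha]$ valid for $H=\{1\}$, so that the fiber $p^{-1}(b_0)$ is literally $\pi_1(\mathbb{P},b_0)$ and $p_\#$ visibly lands on the trivial subgroup.
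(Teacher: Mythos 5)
Your proposal is correct and follows essentially the same route as the paper: the paper's proof likewise cites Theorem~\ref{HPH} together with Remark~\ref{HPH->UPL} to get unique path lifting for $p:\widetilde{\mathbb{P}}\rightarrow\mathbb{P}$, and then concludes that $p$ is a generalized covering projection with $p_\#\pi_1(\widetilde{\mathbb{P}},\widetilde{b}_0)=\{1\}$ and $p_\#$ injective. Your added remarks about the construction of $\widetilde{\mathbb{P}}_{\{1\}}$ and the identification $\left<\alpha\right>=[\alpha]$ are accurate elaborations of the background already recalled in Section~\ref{review}.
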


\begin{proof}
By Theorem~\ref{HPH} and Remark~\ref{HPH->UPL}, $p:\widetilde{\mathbb{P}}\rightarrow \mathbb{P}$ has unique path lifting. Hence, $p:\widetilde{\mathbb{P}}\rightarrow \mathbb{P}$ is a generalized covering projection, $p_\#\pi_1(\widetilde{\mathbb{P}},\widetilde{b}_0)=\{1\}$, and $p_\#:\pi_1(\widetilde{\mathbb{P}},\widetilde{b}_0)\rightarrow \pi_1(\mathbb{P},b_0)$ is injective.
\end{proof}
\
\section{The discrete monodromy property}\label{sec:DMP}

Let $X$ be a path-connected topological space and $H \leqslant \pi_1(X,x_0)$.
Even if the map $p_H:\widetilde{X}_H\rightarrow X$ does not have unique path lifting, for every path $\beta:[0,1]\rightarrow X$ and every $\left<\alpha\right>\in p_H^{-1}(\beta(0))$ there is a continuous {\em standard path lift} $\widetilde{\beta}:([0,1],0)\rightarrow (\widetilde{X}_H,\left<\alpha\right>)$ with $p_H\circ \widetilde{\beta}=\beta$, defined by $\widetilde{\beta}(t)=\left<\alpha\cdot \beta_t\right>$, where $\beta_t(s)=\beta(ts)$.

Based on the standard path lift, we may define the {\em standard monodromy} for $p_H:\widetilde{X}_H\rightarrow X$, as follows. For a path $\beta:[0,1]\rightarrow X$ from $\beta(0)=x$ to $\beta(1)=y$, we define $\Phi_\beta:p_H^{-1}(x)\rightarrow p_H^{-1}(y)$ by $\Phi_\beta(\left<\alpha\right>)=\left<\alpha\cdot \beta\right>$.

Clearly, $\Phi_\beta:p_H^{-1}(x)\rightarrow p_H^{-1}(y)$ is a bijective function with inverse $\Phi_\beta^{-1}=\Phi_{\beta^-}$. However, $\Phi_\beta$ need not be continuous. (Such is the case for $(X,x_0)=(\mathbb{H},b_0)$ and $H=\{1\}$, although $p:\widetilde{X}\rightarrow X$ has unique path lifting. See \cite{FGa} for a discussion.)

\begin{remark}
Note that $\Phi_\beta$ depends only on the homotopy class $[\beta]$. Moreover, $\left<\alpha\right>\ast\left<\beta\right>:=\Phi_\beta(\left<\alpha\right>)$ is a well-defined group operation on $p_H^{-1}(x_0)$  if and only if $H$ is a normal subgroup of $\pi_1(X,x_0)$.
\end{remark}

\begin{definition}[Discrete monodromy]\label{DefDM}
We say that $X$ has the {\em discrete mono\-dromy property relative to} $H\leqslant \pi_1(X,x_0)$ if for every $x, y\in X$ and for every path\linebreak $\beta:[0,1]\rightarrow X$ from $\beta(0)=x$ to $\beta(1)=y$, the monodromy  $\Phi_\beta:p_H^{-1}(x)\rightarrow p_H^{-1}(y)$ is either the identity function or its graph is a discrete subset of $p_H^{-1}(x)\times p_H^{-1}(y)\subseteq \widetilde{X}_H\times \widetilde{X}_H$. We say that $X$ has the {\em discrete monodromy property} if it has the discrete monodromy property relative to $H=\{1\}$.
\end{definition}

\begin{remark}
Clearly, if every fiber of $p_H:\widetilde{X}_H\rightarrow X$ is discrete, then $X$ has the discrete monodromy property relative to $H$. However, the converse does not hold in general: $p:\widetilde{\mathbb{H}}\rightarrow \mathbb{H}$ has the discrete monodromy property (see Proposition~\ref{1dm}), but $p^{-1}(b_0)$ is not discrete.
\end{remark}

\begin{remark}\hspace{10pt}\label{ID}

\begin{itemize} \item[(a)]  $\Phi_\beta:p_H^{-1}(x)\rightarrow p_H^{-1}(y)$ is the identity function if and only if $x=y$ and $[\beta]\in [\alpha^-]H[\alpha]$ for all paths $\alpha$ in $X$ from $x_0$ to $x$.

\item[(b)] The graph of the identity function $id_{p_H^{-1}(x)} : p_H^{-1}(x)\rightarrow p_H^{-1}(x)$ is discrete if and only if $p_H^{-1}(x)$ is discrete.
\item[(c)] $p_H^{-1}(x)$ is discrete if and only if for every path $\alpha$ in $X$ from $x_0$ to $x$, there is a $U\in {\mathcal T}_x$ such that $\pi(\alpha,U)\subseteq H$, i.e.,  $H\pi(\alpha,U)=H$.
\end{itemize}
\end{remark}

\begin{lemma}\label{discreteEQ} Let $H\leqslant \pi_1(X,x_0)$.
The graph of $\Phi_\beta:p_H^{-1}(x)\rightarrow p_H^{-1}(y)$ is discrete if and only if for every path $\alpha$ in $X$ from $x_0$ to $x$, there are $U\in {\mathcal T}_x$  and $V\in {\mathcal T}_y$ such that $H\pi(\alpha,U)\cap H\pi(\alpha\cdot \beta,V)=H$.
\end{lemma}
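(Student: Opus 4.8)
The plan is to unwind the definitions of the basic-open neighborhoods in $\widetilde{X}_H$ and translate ``discreteness of the graph of $\Phi_\beta$'' into a statement about the existence of separating open sets downstairs. Recall that a point of the graph has the form $(\langle\alpha\rangle, \langle\alpha\cdot\beta\rangle)$ for some path $\alpha$ from $x_0$ to $x$, and that a basic open neighborhood of this point in $\widetilde{X}_H\times\widetilde{X}_H$ has the form $\langle\alpha,U\rangle\times\langle\alpha\cdot\beta,V\rangle$ with $U\in{\mathcal T}_x$ and $V\in{\mathcal T}_y$; we may and do take $U,V$ path connected. So discreteness of the graph means: for every path $\alpha$ from $x_0$ to $x$ there are $U\in{\mathcal T}_x$, $V\in{\mathcal T}_y$ such that the only point of the graph lying in $\langle\alpha,U\rangle\times\langle\alpha\cdot\beta,V\rangle$ is $(\langle\alpha\rangle,\langle\alpha\cdot\beta\rangle)$ itself. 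The first step is to make this last condition fully explicit.

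The key computation is: a point $(\langle\alpha'\rangle,\langle\alpha'\cdot\beta\rangle)$ of the graph lies in $\langle\alpha,U\rangle\times\langle\alpha\cdot\beta,V\rangle$ if and only if $\langle\alpha'\rangle\in\langle\alpha,U\rangle$ \emph{and} $\langle\alpha'\cdot\beta\rangle\in\langle\alpha\cdot\beta,V\rangle$. Now $\langle\alpha'\rangle\in\langle\alpha,U\rangle$ means $\alpha'$ is equivalent (in the sense of the covering construction) to $\alpha\cdot\gamma$ for some loop-free path $\gamma$ in $U$ with $\gamma(0)=\alpha(1)=x$; since $U$ is path connected we may absorb the endpoint and conclude $\alpha'\sim\alpha\cdot\delta$ for a loop $\delta$ in $U$, i.e.\ $[\alpha\cdot\delta\cdot\alpha'^-]\in H$. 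Equivalently $[\alpha'\cdot\alpha^-]\in H\cdot\pi(\alpha,U)$ (using that $\pi(\alpha,U)$ is generated by classes $[\alpha\cdot\delta\cdot\alpha^-]$ and that conjugating $H$-cosets is harmless once one is careful with the base point — this bookkeeping is where one must be slightly attentive). Similarly $\langle\alpha'\cdot\beta\rangle\in\langle\alpha\cdot\beta,V\rangle$ translates to $[\alpha'\cdot\alpha^-]\in H\cdot\pi(\alpha\cdot\beta,V)$. Therefore the set of graph points in the chosen basic neighborhood is in bijection with the coset(s) $\{[\alpha'\cdot\alpha^-]\} \subseteq H\pi(\alpha,U)\cap H\pi(\alpha\cdot\beta,V)$, and this set reduces to the single point $(\langle\alpha\rangle,\langle\alpha\cdot\beta\rangle)$ exactly when $H\pi(\alpha,U)\cap H\pi(\alpha\cdot\beta,V)=H$.

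Assembling these equivalences yields both directions at once: the graph of $\Phi_\beta$ is discrete $\iff$ every graph point has such a separating basic neighborhood $\iff$ for every path $\alpha$ from $x_0$ to $x$ there exist $U\in{\mathcal T}_x$ and $V\in{\mathcal T}_y$ with $H\pi(\alpha,U)\cap H\pi(\alpha\cdot\beta,V)=H$. For the forward direction one also notes that it suffices to have the separation property at graph points whose first coordinate ranges over a set of $\langle\alpha\rangle$ representing all of $p_H^{-1}(x)$ — but since every element of the fiber is $\langle\alpha'\rangle$ for some path $\alpha'$ from $x_0$ to $x$, and the condition is stated ``for every path $\alpha$,'' this is automatic. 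The main obstacle is purely organizational rather than deep: keeping the coset arithmetic straight, in particular verifying that membership $\langle\alpha'\rangle\in\langle\alpha,U\rangle$ is correctly captured by $[\alpha'\cdot\alpha^-]\in H\pi(\alpha,U)$ (as opposed to $\pi(\alpha',U)$ or a conjugate), and checking that one may reduce to path-connected $U,V$ without loss. Once that dictionary is nailed down, the proof is a direct two-way translation with no further input.
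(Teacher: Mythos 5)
Your proposal is correct and takes essentially the same route as the paper: you unwind the basic boxes $\langle\alpha,U\rangle\times\langle\alpha\cdot\beta,V\rangle$ and identify the graph points inside such a box with the left $H$-cosets contained in $H\pi(\alpha,U)\cap H\pi(\alpha\cdot\beta,V)$, which is exactly the element-chasing in the paper's two directions, just packaged as a coset dictionary. One minor caveat: your ``we may take $U,V$ path connected'' is neither justified in a general path-connected $X$ nor needed -- since $\langle\alpha'\rangle$ and $\langle\alpha\cdot\gamma\rangle$ both lie in the fiber over $x$, the path $\gamma$ is automatically a loop at $x$, so the dictionary $\langle\alpha'\rangle\in\langle\alpha,U\rangle\iff[\alpha'\cdot\alpha^-]\in H\pi(\alpha,U)$ holds for arbitrary open $U$.
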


\begin{proof} First, observe that if $f:A\rightarrow B$ is an injective function between topological spaces, then its graph $\Gamma=\{(a,b)\in A\times B\mid f(a)=b\}$ is a discrete subset of $A\times B$ if and only if for every $a\in A$  there are $U\in {\mathcal T}_a$ and $V\in {\mathcal T}_{f(a)}$ such that $f(U)\cap V=\{f(a)\}$.

Now, suppose that the graph of $\Phi_\beta:p_H^{-1}(x)\rightarrow p_H^{-1}(y)$ is discrete and let $\left<\alpha\right>\in p_H^{-1}(x)$. Choose $U\in {\mathcal T}_x$  and $V\in {\mathcal T}_y$ with \[\Phi_\beta(\left<\alpha,U\right>\cap p_H^{-1}(x))\cap \left(\left<\alpha\cdot \beta,V\right>\cap p_H^{-1}(y)\right)=\{\left<\alpha\cdot \beta\right>\}.\] Let $g\in H\pi(\alpha,U)\cap H\pi(\alpha\cdot \beta,V)$. Then $g=h_1[\alpha\cdot \gamma\cdot \alpha^-]$ for some $h_1\in H$ and some loop $\gamma$ in $U$, and $g=h_2[\alpha\cdot \beta \cdot \delta \cdot \beta^-\cdot \alpha^-]$ for some $h_2\in H$ and some loop $\delta$ in $V$. Hence, $\Phi_\beta(\left<\alpha\cdot \gamma\right>)=\left<\alpha\cdot \gamma\cdot \beta\right>=\left<\alpha\cdot \beta\cdot  \delta\right>\in \left<\alpha\cdot \beta,V\right>\cap p_H^{-1}(y)$. Also, $\Phi_\beta(\left<\alpha\cdot \gamma\right>)\in \Phi_\beta(\left<\alpha,U\right>\cap p_H^{-1}(x))$. Therefore, $\left<\alpha\cdot\gamma\cdot \beta\right>= \Phi_\beta(\left<\alpha\cdot \gamma\right>)=\left<\alpha\cdot \beta\right>$, so that $g=h_1[\alpha\cdot \gamma\cdot \alpha^-]\in H$.

Conversely, let $\left<\alpha\right>\in p_H^{-1}(x)$ and suppose $U\in {\mathcal T}_x$  and $V\in {\mathcal T}_y$ are such that $H\pi(\alpha,U)\cap H\pi(\alpha\cdot \beta,V)= H$. Let \[\widetilde{x}\in \Phi_\beta(\left<\alpha,U\right>\cap p_H^{-1}(x))\cap \left(\left<\alpha\cdot \beta,V\right>\cap p_H^{-1}(y)\right).\] Then $\widetilde{x}=\left<\alpha\cdot \gamma\cdot \beta\right>$ for some loop $\gamma$ in $U$, and $\widetilde{x}=\left<\alpha\cdot \beta\cdot \delta\right>$ for some loop $\delta$ in $V$. Then $[\alpha\cdot \gamma\cdot \alpha^-][\alpha\cdot \beta\cdot \delta^-\cdot \beta^-\cdot \alpha^-]\in H$, so that $[\alpha\cdot \gamma\cdot \alpha^-]\in H\pi(\alpha,U)\cap H\pi(\alpha\cdot \beta,V)=H$. Hence, $\widetilde{x}=\left<\alpha\cdot \gamma\cdot \beta\right>=\left<\alpha\cdot \beta\right>$.
\end{proof}

\begin{remark} \label{enough} In order to apply Lemma~\ref{discreteEQ}, there is no need to check every path $\alpha$: if $\alpha$ and $\alpha'$ are two paths  from $x_0$ to $x$ such that $[\alpha'\cdot \alpha^-]H=H[\alpha'\cdot \alpha^-]$, then we have $H\pi(\alpha,U)\cap H\pi(\alpha\cdot \beta,V)=H$ if and only if $H\pi(\alpha',U)\cap H\pi(\alpha'\cdot \beta,V)=H$.
\end{remark}

\begin{remark}\label{prototypical} The space  $(X,x_0)=(\mathbb{H}\times [0,1],(b_0,0))$ is the prototypical example of a space that does not have the discrete
monodromy property, although $p:\widetilde{X}\rightarrow X$ has unique path lifting. Observe that for the path $\beta(t)=(b_0,t)$ from $x=(b_0,0)$ to $y=(b_0,1)$, the graph of $\Phi_\beta:p^{-1}(x)\rightarrow p^{-1}(y)$ is not discrete.
\end{remark}

\begin{definition}[Locally quasinormal \cite{FGa}] A subgroup $H\leqslant \pi_1(X,x_0)$ is called {\em locally quasinormal} if for every $x\in X$, for every path $\alpha$ in $X$ from $\alpha(0)=x_0$ to $\alpha(1)=x$, and for every $U\in {\mathcal T}_x$, there is a $V\in {\mathcal T}_x$ such that $x\in V\subseteq U$ and $H\pi(\alpha,V)=\pi(\alpha,V)H$.
\end{definition}

\begin{remark} Clearly, every normal subgroup of $\pi_1(X,x_0)$ is locally quasinormal. Combining \cite[Lemma~5.2]{FZ2013} with Remark~\ref{ID}(c), we see that if $X$ is locally path connected, then every open subgroup of $\pi_1(X,x_0)$ (in the topology of Remark~\ref{T1}) is locally quasinormal. For example, the nontrivial subgroup $K\leqslant \pi_1(\mathbb{H},b_0)$ from \cite{FZ2013} is open, while it does not contain any nontrivial normal subgroup of  $\pi_1(\mathbb{H},b_0)$.
\end{remark}

The following is a straightforward variation on \cite[Lemma~3.2]{FGa}:

\begin{lemma}\label{LQN-NBH}
Let $H\leqslant \pi_1(X,x_0)$, $x\in X$, $\alpha$ be a path in $X$ from $x_0$ to $x$, and $U\in {\mathcal T}_x$.
Then the following are equivalent:
\begin{itemize}
\item[(a)] $H\pi(\alpha,U)=\pi(\alpha,U)H$.
\item[(b)] For every $\Phi_\beta:p_H^{-1}(x)\rightarrow p_H^{-1}(x)$ with $\Phi_\beta(\left<\alpha\right>)\in \left<\alpha,U\right>\cap p_H^{-1}(x)$,  we have $\Phi_{\beta}(\left<\alpha,U\right>\cap p_H^{-1}(x))\subseteq \left<\alpha,U\right>\cap p_H^{-1}(x)$.
\end{itemize}
\end{lemma}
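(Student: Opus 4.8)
The plan is to translate both conditions into assertions about the subgroups $A:=\pi(\alpha,U)$ and $H$ of $\pi_1(X,x_0)$, and then to reduce everything to an elementary fact about the product of two subgroups. Throughout I will use the description of $\widetilde{X}_H$ and of its basic open sets from Section~\ref{review}, together with two preliminary identities: $\{[\alpha\cdot\gamma\cdot\alpha^{-}]\mid \gamma$ a loop in $U$ based at $x\}=A$, and $\langle\alpha,U\rangle\cap p_H^{-1}(x)=\{\langle\alpha\cdot\gamma\rangle\mid \gamma$ a loop in $U$ based at $x\}$ (since the common endpoint forces any path witnessing membership in $\langle\alpha,U\rangle$ to end at $x$). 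For a loop $\beta$ in $X$ based at $x$ I write $g_\beta=[\alpha\cdot\beta\cdot\alpha^{-}]$, and I will use that $[\beta]\mapsto g_\beta$ is an isomorphism $\pi_1(X,x)\to\pi_1(X,x_0)$, so that $g_\beta$ runs over all of $\pi_1(X,x_0)$ as $\beta$ varies.

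The central step is the bookkeeping computation that, for a loop $\gamma$ in $U$ and a loop $\beta$ in $X$, both based at $x$,
\[
\Phi_\beta(\langle\alpha\cdot\gamma\rangle)\in\langle\alpha,U\rangle\cap p_H^{-1}(x)\quad\Longleftrightarrow\quad [\alpha\cdot\gamma\cdot\alpha^{-}]\,g_\beta\in HA.
\]
This unwinds directly from the definitions: $\Phi_\beta(\langle\alpha\cdot\gamma\rangle)=\langle\alpha\cdot\gamma\cdot\beta\rangle$ lies in $p_H^{-1}(x)$, and it lies in $\langle\alpha,U\rangle$ precisely when it equals $\langle\alpha\cdot\mu\rangle$ for some loop $\mu$ in $U$ based at $x$, i.e.\ when $[\alpha\cdot\gamma\cdot\beta\cdot\mu^{-}\cdot\alpha^{-}]\in H$, i.e.\ when $[\alpha\cdot\gamma\cdot\beta\cdot\alpha^{-}]\in H[\alpha\cdot\mu\cdot\alpha^{-}]\subseteq HA$ (and conversely any such membership produces a suitable $\mu$); the claimed form then follows from $[\alpha\cdot\gamma\cdot\beta\cdot\alpha^{-}]=[\alpha\cdot\gamma\cdot\alpha^{-}]\,g_\beta$. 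Taking $\gamma$ constant identifies the hypothesis on $\Phi_\beta$ in (b) with the condition $g_\beta\in HA$; letting $\gamma$ range over all loops in $U$ based at $x$ identifies the conclusion in (b), namely $\Phi_\beta(\langle\alpha,U\rangle\cap p_H^{-1}(x))\subseteq\langle\alpha,U\rangle\cap p_H^{-1}(x)$, with the inclusion $A\,g_\beta\subseteq HA$. Since $g_\beta$ exhausts $\pi_1(X,x_0)$, statement (b) is thereby seen to be equivalent to: $Ag\subseteq HA$ for every $g\in HA$, i.e.\ $A(HA)\subseteq HA$, i.e.\ $AHA\subseteq HA$.

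It then remains to record the elementary fact that, for subgroups $A,H$ of a group, one has $AHA\subseteq HA$ if and only if $HA=AH$. If $HA=AH$, then $AHA=A(AH)=(AA)H=AH=HA$, so certainly $AHA\subseteq HA$. Conversely, if $AHA\subseteq HA$, then $AH\subseteq AHA\subseteq HA$ (using $1\in A$), and, applying inversion (which fixes $A$ and $H$ setwise and reverses products), $AHA\subseteq AH$, whence $HA\subseteq AHA\subseteq AH$; so $HA=AH$. Combining this with the reduction of the previous paragraph yields $(a)\Leftrightarrow(b)$. The argument is essentially formal; the only point that demands care is the faithful bookkeeping in the central step — keeping track of which loops $\beta$ are admissible in (b) and verifying that the endpoint constraints force the auxiliary loops $\gamma$ and $\mu$ to be based at $x$ and to lie in $U$ — so I do not anticipate a genuine obstacle.
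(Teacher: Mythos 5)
Your proof is correct, and it reaches the statement by a somewhat different route than the paper. The paper proves the two implications separately by chasing explicit representatives: for (a)$\Rightarrow$(b) it takes $\langle\gamma\rangle\in\langle\alpha,U\rangle\cap p_H^{-1}(x)$, writes $[\gamma]=h'[\alpha\cdot\delta']$, and uses $\pi(\alpha,U)H=H\pi(\alpha,U)$ to rewrite $[\gamma\cdot\beta]$ in the form $h'h''[\alpha\cdot\delta''\cdot\delta]$; for (b)$\Rightarrow$(a) it applies the hypothesis to the monodromy of $\beta=\alpha^-\cdot\gamma\cdot\alpha$ with $[\gamma]\in H$, evaluated at $\langle\alpha\cdot\delta\rangle$, and remarks (without spelling out why) that the single inclusion $\pi(\alpha,U)H\subseteq H\pi(\alpha,U)$ suffices. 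You instead first build a complete dictionary -- identifying $\langle\alpha,U\rangle\cap p_H^{-1}(x)$ with $\{\langle\alpha\cdot\gamma\rangle\mid\gamma\ \text{a loop in}\ U\ \text{at}\ x\}$ and the admissible $\Phi_\beta$ with arbitrary $g_\beta\in\pi_1(X,x_0)$ -- under which condition (b) becomes exactly the algebraic statement $AHA\subseteq HA$ for $A=\pi(\alpha,U)$, and then you settle that by the elementary fact $AHA\subseteq HA\Leftrightarrow AH=HA$ for subgroups. The unwinding in your central step is the same kind of computation the paper performs inside each implication, so the substance overlaps heavily; what your packaging buys is a symmetric treatment of both directions, an explicit proof (via the inversion trick and $1\in A$) of the ``one inclusion suffices'' step the paper leaves implicit, and a clean isolation of the group-theoretic content of (b). Your endpoint bookkeeping -- that membership in $\langle\alpha,U\rangle\cap p_H^{-1}(x)$ forces the witnessing path in $U$ to be a loop at $x$, and that the paths $\beta$ relevant in (b) are precisely the loops at $x$, so $g_\beta$ exhausts $\pi_1(X,x_0)$ -- is indeed the only delicate point, and you handle it correctly.
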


We include the  proof for completeness.

\begin{proof}
(i) First, assume that $H\pi(\alpha,U)=\pi(\alpha,U)H$ and $\Phi_\beta(\left<\alpha\right>)\in \left<\alpha,U\right>\cap p_H^{-1}(x)$. Then $\left<\alpha\cdot \beta\right>=\Phi_\beta(\left<\alpha\right>)=\left<\alpha\cdot \delta\right>$ for some loop $\delta$ in $U$.
So, $[\beta]=[\alpha^-]h[\alpha\cdot \delta]$ for some $h\in H$. Let $\left<\gamma\right>\in \left<\alpha,U\right>\cap p_H^{-1}(x)$. Then $[\gamma]=h'[\alpha\cdot \delta']$ for some $h'\in H$ and some loop $\delta'$ in $U$. Since  $[\alpha\cdot \delta'\cdot  \alpha^-]h\in \pi(\alpha,U)H=H\pi(\alpha,U)$,  we have $[\alpha\cdot \delta'\cdot  \alpha^-]h=h''[\alpha\cdot \delta''\cdot  \alpha^-]$ for some $h''\in H$ and some loop $\delta''$ in $U$. Therefore, we have $[\gamma\cdot \beta]=h'[\alpha\cdot \delta'\cdot\alpha^-]h[\alpha\cdot \delta]=h'h''[\alpha\cdot \delta''\cdot \alpha^-][\alpha\cdot \delta]=h'h''[\alpha\cdot \delta''\cdot \delta]$. Hence, $\Phi_\beta(\left<\gamma\right>)=\left<\gamma\cdot \beta\right>=\left<\alpha\cdot \delta''\cdot\delta\right>\in \left<\alpha,U\right>\cap p_H^{-1}(x)$.

(ii) Now, assume that $\Phi_{\beta}(\left<\alpha,U\right>\cap p_H^{-1}(x))\subseteq \left<\alpha,U\right>\cap p_H^{-1}(x)$ whenever $\Phi_\beta(\left<\alpha\right>)\in \left<\alpha,U\right>\cap p_H^{-1}(x)$. It suffices to show that $\pi(\alpha,U)H\subseteq H\pi(\alpha,U)$.
Let $[\tau]\in \pi(\alpha,U)H$. Then $[\tau]=[\alpha\cdot \delta\cdot \alpha^-][\gamma]$ for some loop $\delta$ in $U$ and some $[\gamma]\in H$. Put $\beta=\alpha^-\cdot \gamma\cdot \alpha$. Then $\Phi_\beta(\left<\alpha\right>)=\left<\alpha\right>$. Hence, $\left<\alpha\cdot \delta\cdot \beta\right>=\Phi_\beta(\left<\alpha\cdot \delta\right>)=\left<\alpha\cdot \delta'\right>$ for some loop $\delta'$ in $U$. Therefore, $[\tau]=[\alpha\cdot \delta \cdot \beta \cdot (\delta')^-\cdot \alpha^- ][\alpha\cdot \delta'\cdot \alpha^-]\in H\pi(\alpha,U).$
\end{proof}

\begin{proposition}\label{DMP->HH} Let $H\leqslant \pi_1(X,x_0)$ be  locally quasinormal.
 If $X$ has the discrete monodromy property relative to $H$, then every fiber of $p_H:\widetilde{X}_H\rightarrow X$ is T$_1$.
\end{proposition}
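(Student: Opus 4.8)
The plan is to argue by contradiction, using Lemma~\ref{discreteEQ} and Lemma~\ref{LQN-NBH} to convert the discrete monodromy hypothesis into the fiber-$T_1$ conclusion via the characterization in Remark~\ref{HH}. Recall that $p_H:\widetilde{X}_H\rightarrow X$ has $T_1$ fibers if and only if for every $x\in X$ we have $\bigcup_{\alpha(1)=x}\bigcap_{U\in{\mathcal T}_x}H\pi(\alpha,U)=H$. So fix $x\in X$, fix a path $\alpha$ from $x_0$ to $x$, and suppose some $g$ lies in $\bigcap_{U\in{\mathcal T}_x}H\pi(\alpha,U)$; the goal is to show $g\in H$.

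First I would set up the relevant monodromy. Write $g=h[\alpha\cdot\beta\cdot\alpha^-]$ for some $h\in H$ and some loop $\beta$ based at $x$ — this is possible since $g\in H\pi(\alpha,U)$ for any $U$, e.g.\ $U=X$ — and consider the associated monodromy $\Phi_\beta:p_H^{-1}(x)\rightarrow p_H^{-1}(x)$ (note $\beta$ is a loop, so source and target fibers coincide). Observe that $g\in H\pi(\alpha,U)$ for every $U\in{\mathcal T}_x$ translates, by the computation in the proof of Lemma~\ref{discreteEQ}, into the statement $\Phi_\beta(\left<\alpha\right>)\in\left<\alpha,U\right>\cap p_H^{-1}(x)$ for every $U$; in particular $\Phi_\beta(\left<\alpha\right>)$ lies in every basic neighborhood of $\left<\alpha\right>$ in the fiber. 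Now the dichotomy in Definition~\ref{DefDM} applies to $\Phi_\beta$: either $\Phi_\beta$ is the identity, or its graph is discrete.

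In the first case, $\Phi_\beta=\mathrm{id}$ forces, by Remark~\ref{ID}(a), that $[\beta]\in[\alpha^-]H[\alpha]$, hence $[\alpha\cdot\beta\cdot\alpha^-]\in H$ and $g=h[\alpha\cdot\beta\cdot\alpha^-]\in H$, as desired. In the second case, I claim we reach a contradiction with the fact that $\Phi_\beta(\left<\alpha\right>)$ is approximated arbitrarily well by elements of the fiber that $\Phi_\beta$ moves back near $\left<\alpha\right>$. Here is where local quasinormality of $H$ enters: for each $U\in{\mathcal T}_x$ choose, by local quasinormality, a $V\in{\mathcal T}_x$ with $x\in V\subseteq U$ and $H\pi(\alpha,V)=\pi(\alpha,V)H$. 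Since $g\in H\pi(\alpha,V)$ we have $\Phi_\beta(\left<\alpha\right>)\in\left<\alpha,V\right>\cap p_H^{-1}(x)$, so Lemma~\ref{LQN-NBH}(b) gives $\Phi_\beta(\left<\alpha,V\right>\cap p_H^{-1}(x))\subseteq\left<\alpha,V\right>\cap p_H^{-1}(x)\subseteq\left<\alpha,U\right>\cap p_H^{-1}(x)$. Thus every point of the fiber in the neighborhood $\left<\alpha,V\right>$ is mapped by $\Phi_\beta$ into the neighborhood $\left<\alpha,U\right>$ of $\Phi_\beta(\left<\alpha\right>)$. Since $U$ was arbitrary and the $\left<\alpha,U\right>$ form a neighborhood basis of $\Phi_\beta(\left<\alpha\right>)$ in the fiber, no neighborhood of the point $(\left<\alpha\right>,\Phi_\beta(\left<\alpha\right>))$ in $p_H^{-1}(x)\times p_H^{-1}(x)$ meets the graph of $\Phi_\beta$ in that single point only — indeed any such product neighborhood $\left<\alpha,V_0\right>\times\left<\alpha,U_0\right>$ captures the whole graph over $\left<\alpha,V_0\cap V\right>$, which is more than one point unless that neighborhood of $\left<\alpha\right>$ is itself a singleton. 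So the graph fails to be discrete at $(\left<\alpha\right>,\Phi_\beta(\left<\alpha\right>))$ unless $\left<\alpha\right>$ is isolated in the fiber — but if $\left<\alpha\right>$ is isolated then $\left<\alpha,U\right>\cap p_H^{-1}(x)=\{\left<\alpha\right>\}$ for some $U$, forcing $\Phi_\beta(\left<\alpha\right>)=\left<\alpha\right>$, i.e.\ $g\in H$ again. This contradiction (in the genuinely non-discrete sub-case) completes the proof.

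The main obstacle I anticipate is bookkeeping the quantifier order cleanly: the non-discreteness of the graph must be extracted from a single bad point $(\left<\alpha\right>,\Phi_\beta(\left<\alpha\right>))$, and one has to be careful that the neighborhoods produced by local quasinormality genuinely witness the accumulation — that is, that the fiber point $\left<\alpha\right>$ is not isolated, which is exactly the escape hatch that lands us back in the $g\in H$ conclusion. Handling the isolated-point case separately (via Remark~\ref{ID}(b),(c)) keeps the argument honest. A secondary subtlety is that $\Phi_\beta$ depends only on $[\beta]$ and that replacing $\alpha$ by another path to $x$ is harmless here, but since we fixed $\alpha$ at the outset and only need the conclusion for that $\alpha$ before taking the union over all such paths, Remark~\ref{enough} is not strictly needed.
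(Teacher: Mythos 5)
Your proposal is correct and uses essentially the same mechanism as the paper: local quasinormality feeds into Lemma~\ref{LQN-NBH} to make a small basic neighborhood $\left<\alpha,V\right>\cap p_H^{-1}(x)$ invariant under $\Phi_\beta$, which then traps a second graph point in every product neighborhood of $(\left<\alpha\right>,\Phi_\beta(\left<\alpha\right>))$, contradicting discreteness. The only difference is presentational -- you argue directly from the formula in Remark~\ref{HH} with an extra (easy) isolated-point case, whereas the paper runs the contrapositive starting from a pair $\left<\alpha\right>\neq\left<\gamma\right>$ witnessing the failure of T$_1$, for which non-isolation is automatic.
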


\begin{proof} Suppose there is an $x\in X$ such that $p_H^{-1}(x)$  is not T$_1$. Then there are $\left<\alpha\right>,\left<\gamma\right>\in p_H^{-1}(x)$ with $\left<\alpha\right>\not=\left<\gamma\right>$  such that for every $W\in {\mathcal T}_x$ we have $\left<\gamma\right>\in \left<\alpha,W\right>\cap p_H^{-1}(x)$. Let any $U\in {\mathcal T}_x$ be given. Choose $V\in {\mathcal T}_x$ with $x\in V\subseteq U$ and $H\pi(\alpha,V)=\pi(\alpha,V)H$. Put $\widetilde{V}=\left<\alpha,V\right>\cap p_H^{-1}(x)$ and $\beta=\alpha^-\cdot \gamma$. Then $\Phi_\beta(\left<\alpha\right>)=\left<\gamma\right>\in \widetilde{V}$. By Lemma~\ref{LQN-NBH}, $\Phi_\beta(\left<\gamma\right>)\in \widetilde{V}$. Hence, $(\left<\alpha\right>,\Phi_\beta(\left<\alpha\right>))\not=(\left<\gamma\right>,\Phi_\beta(\left<\gamma\right>))$ are both elements of $\widetilde{V}\times \widetilde{V}$. We conclude that  $\Phi_\beta:p_H^{-1}(x)\rightarrow p_H^{-1}(x)$ is not the identity function and that its graph is not discrete.
\end{proof}

\begin{corollary}
If $X$ has the discrete monodromy property, then $X$ is homotopically Hausdorff.
\end{corollary}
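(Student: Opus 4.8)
The plan is to derive this immediately from Proposition~\ref{DMP->HH} by specializing to the trivial subgroup. First I would note that $H=\{1\}\leqslant\pi_1(X,x_0)$ is normal, hence locally quasinormal by the remark following the definition of locally quasinormal. Next, recall that, by Definition~\ref{DefDM}, the statement ``$X$ has the discrete monodromy property'' is by convention exactly the statement ``$X$ has the discrete monodromy property relative to $H=\{1\}$''. So both hypotheses of Proposition~\ref{DMP->HH} are in force with $H=\{1\}$, and the conclusion is that every fiber of $p_{\{1\}}\colon\widetilde{X}_{\{1\}}\to X$ is T$_1$.

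Finally I would observe that, since we abbreviate $p_{\{1\}}\colon\widetilde{X}_{\{1\}}\to X$ as $p\colon\widetilde{X}\to X$, this says precisely that every fiber of $p\colon\widetilde{X}\to X$ is T$_1$, which is the definition of $X$ being homotopically Hausdorff relative to $H=\{1\}$, i.e.\ homotopically Hausdorff. I do not expect any genuine obstacle here: all of the work resides in Proposition~\ref{DMP->HH} (and, through it, in Lemma~\ref{LQN-NBH}), and this corollary merely records the single most frequently used instance of that proposition.
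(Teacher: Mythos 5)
Your proposal is correct and is essentially identical to the paper's proof: the paper also notes that $H=\{1\}$ is locally quasinormal and then invokes Proposition~\ref{DMP->HH}. Nothing further is needed.
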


\begin{proof}
Since $H=\{1\}\leqslant \pi_1(X,x_0)$ is locally quasinormal, this follows from Proposition~\ref{DMP->HH}.
\end{proof}

The proof of the following proposition is modelled on \cite{E2002} and \cite{CK}.

\begin{proposition} \label{1dm} All one-dimensional metric spaces and all planar spaces have the discrete monodromy property.
\end{proposition}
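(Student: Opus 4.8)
The plan is to verify the criterion of Lemma~\ref{discreteEQ} directly, treating one-dimensional metric spaces and planar spaces uniformly by exploiting that in both settings null-homotopic loops contract within their images, or at least within sets no larger than themselves (Proposition~\ref{puv}), together with the fact that both classes have the relevant ``reduced representative'' structure. Fix $X$ in one of the two classes, a point $x\in X$, and a path $\beta$ in $X$ from $x$ to some $y$. By Remark~\ref{enough} (with $H=\{1\}$, so that the quasinormality hypothesis is automatic), it suffices to check the condition of Lemma~\ref{discreteEQ} for a single convenient path $\alpha$ from $x_0$ to $x$. If $\Phi_\beta$ is the identity, there is nothing to prove; otherwise I must produce, for a given $\alpha$, neighborhoods $U\in\mathcal{T}_x$ and $V\in\mathcal{T}_y$ with $\pi(\alpha,U)\cap\pi(\alpha\cdot\beta,V)=\{1\}$, i.e., no nontrivial element of $\pi_1(X,x_0)$ is simultaneously realizable by a small loop at $x$ (conjugated by $\alpha$) and by a small loop at $y$ (conjugated by $\alpha\cdot\beta$).

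First I would choose neighborhood bases $\{U_i\}$ at $x$ and $\{W_i\}$ at $y$. Suppose, for contradiction, that for every $i$ there is a nontrivial $g_i=[\alpha\cdot\mu_i\cdot\alpha^-]=[\alpha\cdot\beta\cdot\nu_i\cdot\beta^-\cdot\alpha^-]$ with $\mu_i$ a loop in $U_i$ and $\nu_i$ a loop in $W_i$. Cancelling the conjugation by $\alpha$, this says $[\mu_i]=[\beta\cdot\nu_i\cdot\beta^-]$ in $\pi_1(X,x)$, with $\mu_i\to x$ and $\nu_i\to y$. The heart of the argument is to show this forces $\Phi_\beta$ to be the identity after all, contradicting our assumption. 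In the one-dimensional metric case, I would pass to reduced representatives: let $\lambda$ be the reduced representative of $\beta$ and, for each $i$, compare the (unique up to reparametrization) reduced representative of $[\mu_i]$ with that of $[\lambda\cdot\nu_i'\cdot\lambda^-]$, where $\nu_i'$ is a reduced loop homotopic to $\nu_i$. Applying Lemma~\ref{cancellinglemma} to $[\lambda\cdot\nu_i'\cdot\lambda^-]$ shows its reduced representative either begins by running along an initial segment of $\lambda$ that exits every neighborhood of $x$ — contradicting $\mu_i\subseteq U_i$ for small $U_i$ — or else $\lambda$ is entirely absorbed into $\nu_i'$, which (since $\nu_i'\subseteq W_i$ shrinks to $y$) is impossible unless $\lambda$ is null, forcing $x=y$ and $[\beta]=1$; and then one checks directly from Remark~\ref{ID}(a) that $\Phi_\beta$ is the identity. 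In the planar case, I would instead invoke \cite[Lemma~13]{FZ2005} to obtain contractions of controlled diameter and argue that the element $[\mu_i]$, being conjugate via $\beta$ into an arbitrarily small loop at $y$, must already be an arbitrarily small loop at $x$; pushing $i\to\infty$ and using homotopical Hausdorffness of planar spaces (which follows from $\pi_1\hookrightarrow\check\pi_1$ being injective \cite{FZ2005}) gives $g_i=1$ eventually, the desired contradiction.

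The main obstacle I anticipate is the bookkeeping in the one-dimensional case: Lemma~\ref{cancellinglemma} is stated for a single reduced loop $\delta$ based at the endpoint of a reduced path $\lambda$, whereas here $\delta=\nu_i'$ varies with $i$ and shrinks toward $y$, so I must extract a uniform statement — namely that the common reduced representative $\gamma_i$ of $[\mu_i]$ and $[\lambda\cdot\nu_i'\cdot\lambda^-]$ satisfies $\lambda([t_i,1])\subseteq\nu_i'([0,1])$ with the ``cancellation point'' $t_i$ eventually equal to $0$ once $\nu_i'$ is small enough to be disjoint from all but a shrinking terminal arc of $\lambda$. Making this precise requires knowing that a reduced path $\lambda$ in a one-dimensional metric space can meet arbitrarily small neighborhoods of its endpoint $y$ only in a terminal subarc (a consequence of reducedness and local structure of one-dimensional spaces), which I would establish as a short preliminary observation. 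Once that is in hand, the symmetric application of Lemma~\ref{cancellinglemma} to $\gamma_i^-$ (as in the proof of Theorem~\ref{SHHthm}) forces $\gamma_i$ to be cancellable, hence $[\mu_i]=1$, completing the contradiction and thereby the verification of Lemma~\ref{discreteEQ}.
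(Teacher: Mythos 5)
Your one-dimensional argument is in essence the paper's own (both hinge on Lemma~\ref{cancellinglemma}), but two of the steps you describe are off. The ``preliminary observation'' you propose is false: a reduced path in a one-dimensional space can meet arbitrarily small neighborhoods of its endpoint in more than a terminal subarc --- in $\mathbb{H}$ the reduced loop $\ell_1\cdot\ell_2$ returns to $b_0$ at its midpoint --- and the claim that the cancellation point $t_i$ is ``eventually $0$'' is neither justified nor needed. Fortunately none of this machinery is required. Since reduction in a one-dimensional space takes place within the image of the path \cite{E2002}, the reduced representative $\gamma_i$ of $[\mu_i]$ lies in $U_i$, so Lemma~\ref{cancellinglemma} gives $\lambda([0,t_i])=\gamma_i([0,s_i])\subseteq U_i$ and $\lambda([t_i,1])\subseteq \nu_i'([0,1])\subseteq W_i$; hence $\lambda([0,1])\subseteq U_i\cup W_i$ for every $i$, forcing $\lambda$ to be constant and $\Phi_\beta$ to be the identity, the desired contradiction. (The paper avoids even the limiting argument: it fixes a single pair $U,V$ omitting some point of the image of the reduced $\beta$ and reads off the contradiction $\beta([0,1])\subseteq\delta([0,1])\cup\tau([0,1])\subseteq U\cup V$.)

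The planar half, however, has a genuine gap. The classes $g_i$ you must kill vary with $i$, so homotopical (or strong homotopical) Hausdorffness gives nothing: those properties only prevent a \emph{fixed} nontrivial class, respectively a fixed essential loop, from being realized in arbitrarily small neighborhoods, whereas here each $g_i$ may be a different nontrivial element represented by the small loop $\mu_i$ at $x$ and, via $\beta$, by the small loop $\nu_i$ at $y$. Likewise \cite[Lemma~13]{FZ2005} controls contractions of \emph{null-homotopic} loops and says nothing about the essential loops $\mu_i,\nu_i$. Indeed, everything you actually invoke (homotopical Hausdorffness, injectivity of $\pi_1\rightarrow\check{\pi}_1$ and its consequences) holds for $\mathbb{H}\times[0,1]$, which fails the discrete monodromy property (Remark~\ref{prototypical}); so some input specific to planarity is unavoidable. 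This is exactly what the paper supplies: from $[\delta]=[\beta\cdot\tau\cdot\beta^-]$ it builds a map of an annulus whose boundary circles go to $\delta$ and $\tau$ and a diametrical arc goes to $\beta$, arranges $X\cap\partial N_\epsilon(x)\neq\partial N_\epsilon(x)$, shows that the preimage of $X\cap\partial N_\epsilon(x)$ separates the two boundary circles (the case $x=y$ is where the homotopically Hausdorff choice of $\epsilon$ enters, to rule out an arc of the annulus mapping into $U$), and then concludes via the separation argument of \cite[Lemma~5.5]{CK} that $\delta$ contracts in $X$, a contradiction. None of this is present in, or recoverable from, your sketch, so the planar case remains unproved as written.
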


\begin{proof}
Let $x,y\in X$ and let $\beta:[0,1]\rightarrow X$ be a path from $\beta(0)=x$ to $\beta(1)=y$.
If $\beta$ is a loop, we assume that it is essential. Let $\alpha$ be any path from $x_0$ to $x$. We wish to find open neighborhoods $U$ and $V$ of $x$ and $y$, respectively, such that
$\pi(\alpha,U)\cap \pi(\alpha\cdot \beta,V)=\{1\}\leqslant \pi_1(X,x_0)$.

(a) Suppose $X\subseteq \mathbb{R}^2$. If $x=y$, choose $\epsilon>0$ such that the loop $\beta$ cannot be homotoped within $X$ into $N_\epsilon(x)=\{z\in \mathbb{R}^2\mid \|x-z\|<\epsilon\}$, relative to its endpoints.
(Here we use the fact that $X$ is homotopically Hausdorff; see Remark~\ref{HH}.) If $x\not=y$, choose any $\epsilon$ with $0<\epsilon<\|x-y\|/2$.

Suppose that $\partial N_r(x)\subseteq X$ for all $0<r<\epsilon$. Then $N_\epsilon(x)\subseteq X$.
In this case, taking $U=N_\epsilon(x)$ gives $\pi(\alpha,U)=\{1\}$.
So, by making $\epsilon$ smaller, if necessary, we may assume that $X\cap \partial N_\epsilon(x)\not=\partial N_\epsilon(x)$.

Put $U=X\cap N_\epsilon(x)$ and $V=X\cap N_\epsilon(y)$. Suppose, to the contrary, that there are essential loops $\delta$ and $\tau$ in $U$ and $V$,
respectively, with $[\alpha\cdot \delta\cdot \alpha^-]=[\alpha\cdot \beta \cdot \tau \cdot \beta^-\cdot \alpha^-]$, i.e., $[\delta]=[\beta\cdot \tau \cdot \beta^-]$. Then there is a map $h:A\rightarrow X$ from an annulus $A$ whose
boundary components $J_1$ and $J_2$ map to $\delta$ and $\tau$, respectively, along with a diametrical arc $a\subseteq A$ connecting $J_1$ to $J_2$ that maps to $\beta$.

If $x\not=y$, then $h^{-1}(X\cap \partial N_\epsilon(x))$ clearly separates $J_1$ from $J_2$ in $A$.
However, this is also true if $x=y$; for otherwise there is an arc $a'\subseteq A$ connecting $J_1$ to $J_2$ which $h$ maps to a path $\beta'$ in $U$, so that $\beta$ is homotopic within $X$, relative to its endpoints, to the concatenation of an initial subpath $\delta'$ of $\delta$, the path $\beta'$, a terminal subpath $\tau'$ of $\tau$, and a path of the form $\tau\cdot \tau\cdots  \tau$ or $\tau^-\cdot \tau^- \cdots  \tau^-$,  all of which lie in $U$, violating the choice of $\epsilon$.
Therefore, as in the proof of \cite[Lemma 5.5]{CK}, the loop $\delta$ contracts within $X$; a contradiction.

(b) Suppose $X$ is a one-dimensional metric space. We may assume that $\beta$ is a reduced non-degenerate path (possibly a loop). Choose open neighborhoods $U$ and $V$ of $x$ and $y$ in $X$, respectively, such that $\beta$ is not contained in $U\cup V$. Suppose, to the contrary, that there are essential loops $\delta$ and $\tau$ in $U$ and $V$,
respectively, with $[\delta]=[\beta\cdot \tau \cdot \beta^-]$. We may assume that both $\delta$ and $\tau$ are reduced. Then $\beta([0,1])\subseteq \delta([0,1])\cup \tau([0,1])\subseteq U\cup V$ (see Lemma~\ref{cancellinglemma}); a contradiction.
\end{proof}

\begin{theorem}\label{PhasDMP}
$\mathbb{P}$ has the discrete monodromy property.
\end{theorem}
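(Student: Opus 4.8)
The plan is to verify the criterion of Lemma~\ref{discreteEQ} for $H=\{1\}$: given points $x,y\in\mathbb{P}$, an essential path $\beta$ from $x$ to $y$ (with $\beta$ assumed essential if it is a loop — otherwise $\Phi_\beta$ has discrete graph by Lemma~\ref{discreteEQ} applied with the one-dimensional argument, or we simply note $x\neq y$), and any path $\alpha$ from $b_0$ to $x$, I must produce $U\in\mathcal{T}_x$ and $V\in\mathcal{T}_y$ with $\pi(\alpha,U)\cap\pi(\alpha\cdot\beta,V)=\{1\}$. Since $\mathbb{P}\setminus\{b_0\}$ is a locally contractible $2$-manifold with boundary, if both $x\neq b_0$ and $y\neq b_0$ we may choose $U,V$ to be simply connected (or small enough that $\beta$ cannot be homotoped into $N_\epsilon(x)$ when $x=y$, using that $\mathbb{P}$ is homotopically Hausdorff, which follows from Theorem~\ref{SHHthm}), and then $\pi(\alpha,U)=\{1\}$ or the argument is immediate. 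So the real case is when $x=b_0$ (and then, by Remark~\ref{enough} and symmetry, also when $y=b_0$), which is what I would treat in detail.

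For $x=b_0$, I would use the neighborhoods $V_n$ of $b_0$ constructed in the proof of Theorem~\ref{SHHthm} (with their deformation retractions onto $U_n\subseteq\mathbb{H}$), together with the injection $\chi$ of Theorem~\ref{inj}. The key step is this: suppose $1\neq[\alpha\cdot\delta\cdot\alpha^-]=[\alpha\cdot\beta\cdot\tau\cdot\beta^-\cdot\alpha^-]$ with $\delta$ a loop in $V_n$ and $\tau$ a loop in a suitable neighborhood $V$ of $y$; I want a contradiction for $n$ large. Push everything forward by $d_{N-1}\circ\cdots\circ d_1$ for $N$ large enough that the compact homotopy witnessing $[\delta]=[\beta\cdot\tau\cdot\beta^-]$ lands in $\mathbb{H}^+_N$ (as in the proof of Theorem~\ref{SHHthm}); we may also arrange that $\delta$ is reduced in $\mathbb{H}$ and hence (since $C_m\not\subseteq U_n$ for $m\geqslant n$) lies in $\mathbb{H}_{n+1}$, so its image $\delta'$ in $\mathbb{H}^+_N$ has the property that $\chi_{n-1}([\delta])=\omega_{N,n-1}([\delta'])$ involves only letters $\rho_{i,j}^{\pm1}$ and $\ell_i^{\pm1}$ with $i\geqslant n+1$ "at the top" — crucially, after applying $R_{n,n-1}\circ\cdots$ the surviving word omits $\ell_1,\dots,\ell_{n-1}$. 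On the other side, $\tau$ can be made small as well: choosing $V$ small we get $[\tau]\in\iota_{m\#}\pi_1(\mathbb{H}_m,b_0)$ for $m$ large when $y=b_0$, so $\chi$ of the conjugate $[\beta\cdot\tau\cdot\beta^-]$ is controlled — when $y=b_0$ and $\beta$ is a fixed essential loop, $\chi([\beta\cdot\tau\cdot\beta^-])$ has a definite nonempty leading behavior coming from $\chi([\beta])$ that does not get cancelled by a small $\tau$ (this is exactly the monoid-non-cancellation phenomenon that $\chi$ was designed to detect). Matching the two computations under the injective $\chi$ forces $[\delta]$, hence the common element, to be $1$, contradicting our assumption.

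The main obstacle I expect is the bookkeeping in the previous paragraph: making precise the claim that, for $\delta$ a reduced loop in $U_n$ with $n$ large, $\chi_{n-1}([\delta])$ is "small" in a sense incompatible with $\chi_{n-1}([\alpha\cdot\beta\cdot\tau\cdot\beta^-\cdot\alpha^-])$ being "small" — one must quantify both sides against the same fixed word $\omega_{N,n-1}$ and use that $E_{n-1},\dots$ and the maps $\overline{D}$, $D_n$ from Section~\ref{wordsequences} are injective monoid homomorphisms, so letter-counts are monotone and nontrivial leading data cannot disappear. A clean way to package this is: if $y\neq b_0$, handle it as in the $2$-manifold case above; if $y=b_0$, reduce (via Remark~\ref{enough}, replacing $\alpha$ by $\alpha\cdot\beta$) to the already-treated situation of a path ending at $b_0$, i.e., by symmetry the only genuinely new content is the single case "$x=b_0$ and $\beta$ a small loop," which is precisely the statement that small loops at $b_0$ do not become essential-and-conjugate — and that is what the strong homotopic Hausdorffness (Theorem~\ref{SHHthm}) plus the injectivity of $\chi$ deliver. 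I would structure the final proof around that reduction, citing Lemma~\ref{discreteEQ}, Remark~\ref{enough}, Theorem~\ref{inj}, Theorem~\ref{SHHthm}, and Lemma~\ref{cancellinglemma}, and relegate the word-length estimate to a short computation mirroring the end of the proof of Theorem~\ref{HPH}.
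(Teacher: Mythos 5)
Your reduction via Lemma~\ref{discreteEQ} and Remark~\ref{enough} to the single case $x=y=b_0$, $\alpha=c_{b_0}$, $1\neq[\beta]\in\pi_1(\mathbb{P},b_0)$ is sound and matches the paper (one small slip: if $\beta$ is a null-homotopic loop at $b_0$ then $\Phi_\beta$ is the identity, which is what Definition~\ref{DefDM} permits -- its graph is \emph{not} discrete, since $p^{-1}(b_0)$ is not discrete). The problem is that the engine of your argument is exactly the step you defer: the claim that $\chi([\beta\cdot\tau\cdot\beta^-])$ retains ``nonempty leading behavior'' from $\chi([\beta])$ when $\tau$ is small. Since $\chi$ and the maps $\omega_{n,m}$ are not homomorphisms, nothing about $\chi([\beta])$ transfers formally to $\chi$ of a conjugate; moreover $[\delta]=[\beta\cdot\tau\cdot\beta^-]$ is a single group element, so there are not ``two computations to match'' -- what you must show is that one fixed level-$(n-1)$ word cannot be simultaneously empty (forced by $\delta$ lying in $V_n$, hence reducing into $\mathbb{H}_{n+1}$) and nonempty (forced by the conjugate form). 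The missing argument lives at the free-group level $g_{n,k}$: choose $n$ large enough that the pushforward of $\beta$ into $\mathbb{H}^+_n$ has reduced word $w$ containing some letter $\rho_{i,j}^{\pm1}$ with $i\leqslant n-1$ (possible since nonemptiness of $\chi_m([\beta])$ propagates upward, or, as in the paper, by pushing $\beta$ forward until it traverses some $B_{i,j}$); a reduced $\tau$ in $V_n$ contributes a word $u$ in the letters $\ell_i^{\pm1}$, $i\geqslant n+1$, only, so in the free reduction of $wuw^{-1}$ no $\rho$-letter of $w$ can cancel unless $u=1$; hence $\omega_{n,n-1}([\beta\cdot\tau\cdot\beta^-])$ is nonempty while $\omega_{n,n-1}([\delta])$ is empty -- a contradiction. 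Without something like this your proof is incomplete, and your closing ``clean packaging'' does not repair it: the hard case is an \emph{arbitrary fixed} essential loop $\beta$, not a small one, and strong homotopical Hausdorffness (Theorem~\ref{SHHthm}) does not supply the needed neighborhood, because the essential loops $\delta$ realizing elements of $\pi(c_{b_0},V_n)\cap\pi(\beta,V_n)$ may vary with $n$, so no single essential loop is being freely homotoped into every $V_n$.

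For comparison, the paper's proof bypasses the word machinery of Section~\ref{wordsequences} altogether: after the same reduction, it pushes $\beta$ forward to a reduced loop in $\mathbb{H}^+_{n+1}$ traversing some $B_{i,j}$, builds $V$ as in Theorem~\ref{UVThm} so that $V$ deformation retracts onto $\mathbb{H}_{n+1}$ but does not contain $B_{i,j}$, pushes a hypothetical relation $[\delta]=[\beta\cdot\tau\cdot\beta^-]$ forward into the one-dimensional space $\mathbb{H}^+_{k+1}$ with all three loops reduced, and gets the contradiction directly from Lemma~\ref{cancellinglemma} ($\beta'$ traverses $B_{i,j}$, $\delta'$ and $\tau'$ do not). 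Your $\chi$-based route can be completed along the lines sketched above and is a legitimate alternative (mirroring the proof of Theorem~\ref{HPH}), but as written the central non-cancellation step is asserted rather than proved, so there is a genuine gap.
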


\begin{proof}
 Let $1\not=[\beta]\in\pi_1(\mathbb{P},b_0)$. In view of Remark~\ref{ID}(a), Lemma~\ref{discreteEQ} and Remark~\ref{enough}, and since $\mathbb{P}$ locally contractible at every point other than $b_0$, it suffices to find an open neighborhood $V$ of $b_0$ in $\mathbb{P}$ such that $\pi(c_{b_0},V)\cap \pi(\beta,V)=\{1\}$, where $c_{b_0}$ denotes the constant path at $b_0$.

Choose $n\in \mathbb{N}$ such that $\beta([0,1])\cap P^\circ_i=\emptyset$ for all $i>n$. Put $\beta'=d_n\circ d_{n-1}\circ \cdots \circ d_1\circ \beta$. Then $\beta'([0,1])\subseteq \mathbb{H}^+_{n+1}$ and $[\beta]=[\beta']\in\pi_1(\mathbb{P},b_0)$. So, we may assume that $\beta$ is a reduced loop in $\mathbb{H}^+_{n+1}$. Increasing $n$ if necessary, we may assume that $\beta$ traverses one of the circles $B_{i,j}$ with $i\in \{1,2,\dots, n\}$ and $j\in \{1,2\}$. As in the proof of Theorem~\ref{UVThm}, we may construct an open neighborhood $V$ of $b_0$ in $\mathbb{P}$ that does not fully contain $B_{i,j}$, such that $V$ deformation retracts onto $\mathbb{H}_{n+1}\subseteq \mathbb{H}$.

Suppose, to the contrary, that there are essential loops $\delta$ and $\tau$ in $V$ such that $[\delta]=[\beta\cdot \tau\cdot \beta^-]\in\pi_1(\mathbb{P},b_0)$. We may assume that both $\delta$ and $\tau$ are reduced loops in $\mathbb{H}_{n+1}$. Let $F$ be a homotopy from $\delta$ to $\beta\cdot \tau\cdot \beta^-$ (relative to endpoints) within $\mathbb{P}$. Choose $k\geqslant n$ such that the image of $d_k\circ d_{k-1}\circ \cdots \circ d_1\circ F$ is contained in $\mathbb{H}^+_{k+1}$. Let $\beta'$, $\delta'$ and $\tau'$ be the composition of  $d_k\circ d_{k-1}\circ \cdots \circ d_1$ with  $\beta$, $\delta$ and $\tau$, respectively. Then $\beta'$, $\delta'$ and $\tau'$ are reduced loops in $\mathbb{H}^+_{k+1}$ such that $[\delta']=[\beta'\cdot \tau'\cdot \beta'^-]\in\pi_1(\mathbb{H}^+_{k+1},b_0)$. However, neither $\delta'$ nor $\tau'$ traverses $B_{i,j}$, while $\beta'$ does; a contradiction. (See Lemma~\ref{cancellinglemma}.)
\end{proof}

 Consider the subspace $w(X)$ of ``wild'' points of $X$, defined by \[w(X)=\{x\in X\mid X \mbox{ is not semilocally simply connected at } x\}.\]
The following is the main utility for spaces satisfying the discrete monodromy property, as implicitly used in \cite{E2002} and \cite{CK}. The proof is given after Corollary~\ref{hmpty} below.

\begin{theorem}\label{utility} Suppose both $X$ and $Y$ have the discrete monodromy property. If $f:X\rightarrow Y$ is a homotopy equivalence with homotopy inverse $g:Y\rightarrow X$, then $f$ maps $w(X)$ homeomorphically onto $w(Y)$, with inverse $g|_{w(Y)}$.
\end{theorem}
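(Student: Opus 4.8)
The plan is to reduce the statement to the following lemma: \emph{if a path-connected space $Z$ has the discrete monodromy property and $h\colon Z\to Z$ is homotopic to $\mathrm{id}_Z$, then $h(z)=z$ for every $z\in w(Z)$.} Granting this, the rest is formal. First I would record two inclusions that hold for \emph{any} homotopy equivalence, independently of the discrete monodromy property: $f(w(X))\subseteq w(Y)$ and $g(w(Y))\subseteq w(X)$. Indeed, if $x\in w(X)$ but $Y$ were semilocally simply connected at $f(x)$, one could choose an open set $W\ni f(x)$ with $\pi_1(W,f(x))\to\pi_1(Y,f(x))$ trivial; then $f^{-1}(W)$ is a neighborhood of $x$, so by the failure of semilocal simple connectivity at $x$ it contains a loop $\delta$ at $x$ with $[\delta]\neq 1$ in $\pi_1(X,x)$; now $f\circ\delta$, being a loop in $W$ at $f(x)$, is null-homotopic in $Y$, hence $g\circ f\circ\delta$ is null-homotopic in $X$; but via the track at $x$ of the homotopy $g\circ f\simeq\mathrm{id}_X$, the loop $g\circ f\circ\delta$ is path-homotopic to a conjugate of $\delta$, which is not null-homotopic --- a contradiction. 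The symmetric argument gives $g(w(Y))\subseteq w(X)$.

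Granting the lemma, I would apply it to $h=g\circ f\colon X\to X$ --- legitimate, since $X$ has the discrete monodromy property --- to obtain $g\circ f=\mathrm{id}$ on $w(X)$, and to $h=f\circ g\colon Y\to Y$ to obtain $f\circ g=\mathrm{id}$ on $w(Y)$. Together with the inclusions above, this shows that $f|_{w(X)}\colon w(X)\to w(Y)$ and $g|_{w(Y)}\colon w(Y)\to w(X)$ are well-defined continuous maps (restrictions of $f$ and $g$ to subspaces) that are mutually inverse; hence $f$ carries $w(X)$ homeomorphically onto $w(Y)$ with inverse $g|_{w(Y)}$, which is the assertion of the theorem.

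The substance of the proof, and the step I expect to be the main obstacle, is the lemma itself; this is where the discrete monodromy property is genuinely used. I would argue as follows. By Remark~\ref{ID}(c) (with $H=\{1\}$), a point $z$ lies in $w(Z)$ exactly when the fiber $p^{-1}(z)$ of $p\colon\widetilde{Z}\to Z$ is non-discrete, equivalently when $\pi(\alpha,U)\neq\{1\}$ for every neighborhood $U$ of $z$, where $\alpha$ is any fixed path from the basepoint $z_0$ to $z$. Fix $z\in w(Z)$, put $z'=h(z)$, let $K\colon Z\times[0,1]\to Z$ be a homotopy from $\mathrm{id}_Z$ to $h$, and let $\mu$ be the path $t\mapsto K(z,t)$ from $z$ to $z'$. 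Traversing the square $(s,t)\mapsto K(\gamma(s),t)$ gives the standard change-of-basepoint identity $[\alpha\cdot\gamma\cdot\alpha^-]=[(\alpha\cdot\mu)\cdot(h\circ\gamma)\cdot(\alpha\cdot\mu)^-]$ in $\pi_1(Z,z_0)$, for every loop $\gamma$ at $z$ and every path $\alpha$ from $z_0$ to $z$. Now suppose, toward a contradiction, that $z'\neq z$. Then $\Phi_\mu\colon p^{-1}(z)\to p^{-1}(z')$ is not the identity function (Remark~\ref{ID}(a) would otherwise force $z=z'$), so the discrete monodromy property says its graph is discrete, and Lemma~\ref{discreteEQ} provides, for our $\alpha$, neighborhoods $U$ of $z$ and $V$ of $z'$ with $\pi(\alpha,U)\cap\pi(\alpha\cdot\mu,V)=\{1\}$. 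But $U\cap h^{-1}(V)$ is a neighborhood of $z$, so by $z\in w(Z)$ it contains a loop $\gamma$ at $z$ with $[\gamma]\neq 1$ in $\pi_1(Z,z)$, and hence $[\alpha\cdot\gamma\cdot\alpha^-]\neq 1$; on the other hand $\gamma$ lies in $U$, so $[\alpha\cdot\gamma\cdot\alpha^-]\in\pi(\alpha,U)$, while $h\circ\gamma$ is a loop in $V$, so the displayed identity yields $[\alpha\cdot\gamma\cdot\alpha^-]\in\pi(\alpha\cdot\mu,V)$ --- contradicting the triviality of the intersection. Therefore $z'=z$. The only delicate point in all of this is keeping the basepoint tracks straight in the change-of-basepoint identity; the remaining manipulations are routine unwinding of the definitions of $\pi(\alpha,U)$ and of the monodromy $\Phi_\mu$.
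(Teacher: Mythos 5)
Your proposal is correct and takes essentially the same route as the paper: your key lemma (a map homotopic to the identity fixes every wild point) is precisely the specialization of the paper's Lemma~\ref{agree} and Corollary~\ref{hmpty} to $h\simeq \mathrm{id}$, with the same use of Lemma~\ref{discreteEQ} applied to the track of the homotopy, and your concluding bookkeeping with $g\circ f$ on $w(X)$ and $f\circ g$ on $w(Y)$ matches the paper's proof of the theorem. No gaps.
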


\begin{remark}
 In order to see the necessity of the assumptions in Theorem~\ref{utility}, consider $X=\mathbb{H}$ and $Y=\mathbb{H}\times [0,1]$. Then $X$ has the discrete monodromy property, $X$ and $Y$ are homotopy equivalent, but $w(X)=\{b_0\}$ and $w(Y)=\{b_0\}\times [0,1]$ are not homeomorphic.
\end{remark}

For a path $\beta:[0,1]\rightarrow X$, we let  $\varphi_\beta:\pi_1(X,\beta(0))\rightarrow \pi_1(X,\beta(1))$ be the base point
changing isomorphism defined by $\varphi_\beta([\delta])=[\beta^-\cdot \delta\cdot \beta]$.

\begin{lemma}\label{agree} Suppose $Y$ has the discrete monodromy property.
Let $f,g:X\rightarrow Y$ be maps such that $\varphi_\beta\circ f_\#= g_\#:\pi_1(X,x)\rightarrow \pi_1(Y,g(x))$ for some $x\in X$ and some path $\beta$ in $Y$ from $f(x)$ to $g(x)$.
If $f(x)\not=g(x)$, then there is a $W\in {\mathcal T}_x$ such that $f_\#:\pi_1(W,x)\rightarrow \pi_1(Y,f(x))$ is  trivial.
\end{lemma}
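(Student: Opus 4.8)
The plan is to derive the statement directly from the discrete monodromy property of $Y$ via Lemma~\ref{discreteEQ}, using the hypothesis $\varphi_\beta\circ f_\#=g_\#$ only to transport the relevant obstruction from $g$ to $f$. First I would fix $f(x)$ as the base point of $Y$ and consider the standard monodromy $\Phi_\beta\colon p^{-1}(f(x))\to p^{-1}(g(x))$ associated with the endpoint projection $p\colon\widetilde{Y}\to Y$ (the case $H=\{1\}$). Because $f(x)\neq g(x)$, Remark~\ref{ID}(a) guarantees that $\Phi_\beta$ is not the identity function; so, since $Y$ has the discrete monodromy property, the graph of $\Phi_\beta$ is a discrete subset of $\widetilde{Y}\times\widetilde{Y}$. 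Applying Lemma~\ref{discreteEQ} with $H=\{1\}$ and with $\alpha=c_{f(x)}$ the constant path, I then obtain $U\in{\mathcal T}_{f(x)}$ and $V\in{\mathcal T}_{g(x)}$ with
\[\pi(c_{f(x)},U)\cap\pi(\beta,V)=\{1\}\leqslant\pi_1(Y,f(x)),\]
where I have used that $c_{f(x)}\cdot\beta$ is path-homotopic to $\beta$ and that $\pi(-,V)$ depends only on the path-homotopy class of its first argument.

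Next I would set $W=f^{-1}(U)\cap g^{-1}(V)$, an open neighborhood of $x$ in $X$, and show that $f_\#$ kills $\pi_1(W,x)$. For an arbitrary $[\delta]\in\pi_1(W,x)$, the loop $f\circ\delta$ lies in $U$, so $f_\#([\delta])=[f\circ\delta]\in\pi(c_{f(x)},U)$. On the other hand, $g\circ\delta$ is a loop in $V$, and the hypothesis gives $[\beta^-\cdot(f\circ\delta)\cdot\beta]=\varphi_\beta(f_\#([\delta]))=g_\#([\delta])=[g\circ\delta]$ in $\pi_1(Y,g(x))$, whence $f_\#([\delta])=[f\circ\delta]=[\beta\cdot(g\circ\delta)\cdot\beta^-]\in\pi(\beta,V)$. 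Therefore $f_\#([\delta])$ lies in the intersection displayed above, so $f_\#([\delta])=1$. As $[\delta]$ was arbitrary, $f_\#\colon\pi_1(W,x)\to\pi_1(Y,f(x))$ is trivial, as desired.

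I do not anticipate a genuine difficulty here; the argument is essentially a bookkeeping assembly of Lemma~\ref{discreteEQ}, Remark~\ref{ID}(a), and the hypothesis. The one step to watch is confirming that $\Phi_\beta$ is \emph{not} the identity, since only then does the discrete monodromy property yield discreteness of its graph rather than no information at all; this is exactly where the assumption $f(x)\neq g(x)$ is used. A purely cosmetic choice is to base $Y$ at $f(x)$ with $\alpha$ constant; alternatively one could carry a general path $\alpha$ from an arbitrary base point of $Y$ to $f(x)$ (and the path $\alpha\cdot\beta$ to $g(x)$) through the computation, at the cost of extra notation.
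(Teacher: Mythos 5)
Your proposal is correct and follows essentially the same route as the paper's proof: base $Y$ at $f(x)$, use the discrete monodromy property (noting $\Phi_\beta$ cannot be the identity since $f(x)\neq g(x)$) together with Lemma~\ref{discreteEQ} to get $U\in{\mathcal T}_{f(x)}$, $V\in{\mathcal T}_{g(x)}$ with $\pi(c_{f(x)},U)\cap\pi(\beta,V)=\{1\}$, then take $W$ with $f(W)\subseteq U$, $g(W)\subseteq V$ and use $\varphi_\beta\circ f_\#=g_\#$ to place $f_\#([\delta])$ in that trivial intersection. The only difference is that you spell out explicitly the step (implicit in the paper) that non-identity of $\Phi_\beta$ is what licenses the appeal to discreteness of its graph.
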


\begin{proof} Suppose $f(x)\not=g(x)$. Let $c_{f(x)}$ be the constant path at $f(x)$. By Lemma~\ref{discreteEQ}, there are $U\in {\mathcal T}_{f(x)}$ and $V\in {\mathcal T}_{g(x)}$ such that $\pi(c_{f(x)},U)\cap \pi(\beta,V)=\{1\}\leqslant \pi_1(Y,f(x))$. Choose $W\in {\mathcal T}_x$ with $f(W)\subseteq U$ and $g(W)\subseteq V$. Let $\ell$ be a loop in $W$, based at $x$. Then $f_\#([\ell])=[f\circ\ell]=[\beta\cdot (g\circ\ell)\cdot \beta^-]\in \pi(c_{f(x)},U)\cap \pi(\beta,V)=\{1\}$.
\end{proof}

\begin{corollary}\label{hmpty} Suppose $X$ is path connected and  $Y$ has the discrete monodromy property.
If $f,g:X\rightarrow Y$ are homotopic maps and $f_\#:\pi_1(X,x_0)\rightarrow \pi_1(Y,f(x_0))$ is injective, then $f|_{w(X)}=g|_{w{(X)}}$.
\end{corollary}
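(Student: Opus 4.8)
The plan is to combine Lemma~\ref{agree} with a connectedness argument on the wild set $w(X)$. First, since $f$ and $g$ are homotopic, there is a homotopy $F:X\times[0,1]\rightarrow Y$ with $F(\cdot,0)=f$ and $F(\cdot,1)=g$; for each $x\in X$, the path $\beta_x(t)=F(x,t)$ runs from $f(x)$ to $g(x)$ and satisfies $\varphi_{\beta_x}\circ f_\#=g_\#:\pi_1(X,x)\rightarrow\pi_1(Y,g(x))$, by the standard naturality of base-point change along a homotopy track. So the hypotheses of Lemma~\ref{agree} are met at every point $x\in X$.

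Next, fix $x\in w(X)$ and suppose, for contradiction, that $f(x)\neq g(x)$. By Lemma~\ref{agree} there is a $W\in{\mathcal T}_x$ such that $f_\#:\pi_1(W,x)\rightarrow\pi_1(Y,f(x))$ is trivial. The point is to leverage this to contradict wildness of $x$. Here I would first reduce to the case where $f_\#:\pi_1(X,x_0)\rightarrow\pi_1(Y,f(x_0))$ being injective forces $f_\#:\pi_1(X,x)\rightarrow\pi_1(Y,f(x))$ to be injective for every $x$ in the path-component of $x_0$ — which is all of $X$, since $X$ is path connected — by conjugating with a path from $x_0$ to $x$ and using that base-point change is an isomorphism. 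Then, for any loop $\ell$ in $W$ based at $x$, we have $f_\#([\ell])=1$ in $\pi_1(Y,f(x))$, and by injectivity of $f_\#:\pi_1(X,x)\rightarrow\pi_1(Y,f(x))$ we conclude $[\ell]=1$ in $\pi_1(X,x)$. Thus the inclusion-induced map $\pi_1(W,x)\rightarrow\pi_1(X,x)$ is trivial, i.e.\ $X$ is semilocally simply connected at $x$ — contradicting $x\in w(X)$. Therefore $f(x)=g(x)$ for all $x\in w(X)$, which is exactly $f|_{w(X)}=g|_{w(X)}$.

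The step I expect to be the main obstacle is the reduction ensuring that $f_\#$ is injective at \emph{every} point of $X$ (not just at the chosen base point $x_0$), because Lemma~\ref{agree} is applied pointwise and its useful conclusion about $\pi_1(W,x)$ only bites when combined with injectivity of $f_\#$ at that same point $x$. This requires tracking how $f_\#$ interacts with base-point-change isomorphisms along a path from $x_0$ to $x$; one must check that if $\alpha$ is a path in $X$ from $x_0$ to $x$ and $\beta'=f\circ\alpha$ is its image in $Y$, then $f_\#\circ\varphi_\alpha=\varphi_{\beta'}\circ f_\#$, so injectivity of $f_\#$ at $x_0$ transfers to injectivity at $x$. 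This is routine but must be stated carefully. A secondary minor point is confirming that the homotopy track $\beta_x$ genuinely induces the base-point change $\varphi_{\beta_x}$ relating $f_\#$ and $g_\#$; this is the classical fact that homotopic maps induce the same homomorphism on $\pi_1$ up to the conjugation by the homotopy track, and I would cite it or include a one-line verification.
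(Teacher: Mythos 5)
Your proposal is correct and follows essentially the same route as the paper: use the homotopy track to get $\varphi_{\beta_x}\circ f_\#=g_\#$, apply Lemma~\ref{agree}, and contradict wildness of $x$ via injectivity of $f_\#$ at $x$. The paper's proof is simply terser, leaving implicit the basepoint-change transfer of injectivity from $x_0$ to $x$ (which is exactly where path-connectedness of $X$ is used) that you spell out.
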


\begin{proof}
Let $F:X\times [0,1]\rightarrow Y$ be a map with $F(x,0)=f(x)$ and $F(x,1)=g(x)$ for all $x\in X$. Fix $x\in w(X)$ and let $\beta:[0,1]\rightarrow Y$ be given by $\beta(t)=F(x,t)$. Then $\varphi_\beta\circ f_\#= g_\#:\pi_1(X,x)\rightarrow \pi_1(Y,g(x))$. Since $x\in w(X)$ and since $f_\#:\pi_1(X,x)\rightarrow \pi_1(Y,f(x))$ is injective, its follows from Lemma~\ref{agree} that $f(x)=g(x)$.
\end{proof}

\begin{proof}[Proof of Theorem~\ref{utility}]
Let $f:X\rightarrow Y$ and $g:Y\rightarrow X$ be a pair of homotopy inverses. Then, for every $x\in X$, $f_\#:\pi_1(X,x)\rightarrow \pi_1(Y,f(x))$ is an isomorphism; in particular, it is injective. Therefore, $f(w(X))\subseteq w(Y)$. Similarly, $g(w(Y))\subseteq w(X)$. Since $g\circ f$ is homotopic to the identity it follows from Corollary~\ref{hmpty} that $g(f(x))=x$ for all $x\in w(X)$. Similarly, $f(g(y))=y$ for all $y\in w(Y)$.
\end{proof}

When working with spaces for which homomorphisms between fundamental groups are induced by continuous maps up to base point change, as is the case among all one-dimensional and planar Peano continua \cite{CK, E2010, K}, the following provides additional utility:

\begin{corollary}
Suppose both $X$ and $Y$ have the discrete monodromy property.\linebreak Let $\phi:\pi_1(X,x_0)\rightarrow \pi_1(Y,y_0)$ be an isomorphism with $\phi=\varphi_\alpha\circ f_\#$ and $\phi^{-1}=\varphi_\beta\circ g_\#$ for some maps $f:X\rightarrow Y$ and $g:Y\rightarrow X$ and some  paths $\alpha$ and $\beta$.
Then $f$ maps $w(X)$ homeomorphically onto $w(Y)$, with inverse $g|_{w(Y)}$.
\end{corollary}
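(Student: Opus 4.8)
The plan is to follow the proof of Theorem~\ref{utility}, with the homotopies used there replaced by paths extracted algebraically from the hypotheses. First I would note that, since every $\varphi_{(\cdot)}$ is a base point changing isomorphism, the identities $\phi=\varphi_\alpha\circ f_\#$ and $\phi^{-1}=\varphi_\beta\circ g_\#$ force $f_\#:\pi_1(X,x_0)\to\pi_1(Y,f(x_0))$ and $g_\#:\pi_1(Y,y_0)\to\pi_1(X,g(y_0))$ to be isomorphisms; conjugating with base point changing isomorphisms along the $f$-image, respectively $g$-image, of a path from the base point then shows that $f_\#:\pi_1(X,x)\to\pi_1(Y,f(x))$ and $g_\#:\pi_1(Y,y)\to\pi_1(X,g(y))$ are isomorphisms, in particular injective, at every base point. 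Exactly as in the proof of Theorem~\ref{utility}, this injectivity gives $f(w(X))\subseteq w(Y)$ and $g(w(Y))\subseteq w(X)$.

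The crux is to show that $(g\circ f)_\#:\pi_1(X,x)\to\pi_1(X,g(f(x)))$ and $(f\circ g)_\#:\pi_1(Y,y)\to\pi_1(Y,f(g(y)))$ are themselves base point changing isomorphisms, i.e.\ of the form $\varphi_\eta$ for a path $\eta$ from the source point to its image. For $(g\circ f)_\#$ at $x=x_0$ one uses the naturality relation $h_\#\circ\varphi_\sigma=\varphi_{h\circ\sigma}\circ h_\#$ for a map $h$ and a path $\sigma$; in particular $g_\#$ at the base point $f(x_0)$ equals $\varphi_{g\circ\alpha}^{-1}\circ g_\#\circ\varphi_\alpha$ with $g_\#$ now taken at $y_0$, so that
\[
(g\circ f)_\#=g_\#\circ f_\#=\varphi_{g\circ\alpha}^{-1}\circ g_\#\circ\varphi_\alpha\circ f_\#=\varphi_{g\circ\alpha}^{-1}\circ g_\#\circ\phi=\varphi_{g\circ\alpha}^{-1}\circ\varphi_\beta^{-1}=\varphi_{\beta^-\cdot(g\circ\alpha)^-}.
\]
Transporting along a path from $x_0$ to an arbitrary $x$ shows $(g\circ f)_\#:\pi_1(X,x)\to\pi_1(X,g(f(x)))$ is $\varphi_\eta$ for a suitable path $\eta$ in $X$ from $x$ to $g(f(x))$; the identical computation applied to $\phi^{-1}$ (which is realized by $g$ and $f$ playing the roles of $f$ and $g$) handles $(f\circ g)_\#$.

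With these facts in hand, I would re-run the argument of Lemma~\ref{agree} with $X$ (which has the discrete monodromy property) as the target space, applied to the pair of maps $g\circ f$ and $\operatorname{id}_X$, viewed as maps $X\to X$. For $x\in w(X)$, writing $(g\circ f)_\#=\varphi_\eta$ with $\eta$ a path from $x$ to $g(f(x))$, the path $\eta^-$ satisfies $\varphi_{\eta^-}\circ(g\circ f)_\#=\operatorname{id}=(\operatorname{id}_X)_\#$ on $\pi_1(X,x)$; so by Lemma~\ref{agree}, if $g(f(x))\neq x$ then there is a $W\in\mathcal T_x$ with $(g\circ f)_\#:\pi_1(W,x)\to\pi_1(X,g(f(x)))$ trivial. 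But since $x\in w(X)$, the image of $\pi_1(W,x)$ in $\pi_1(X,x)$ is nontrivial for every $W\in\mathcal T_x$, and $(g\circ f)_\#:\pi_1(X,x)\to\pi_1(X,g(f(x)))$ is injective; hence $(g\circ f)_\#:\pi_1(W,x)\to\pi_1(X,g(f(x)))$ is nontrivial for every $W$, forcing $g(f(x))=x$. By the symmetric argument, $f(g(y))=y$ for all $y\in w(Y)$. Together with $f(w(X))\subseteq w(Y)$ and $g(w(Y))\subseteq w(X)$, this makes $f|_{w(X)}$ and $g|_{w(Y)}$ mutually inverse continuous bijections, hence homeomorphisms.

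The step I expect to be the main obstacle is the middle one: the hypotheses present $\phi$ and $\phi^{-1}$ via $f_\#$ and $g_\#$ taken at their own natural base points $f(x_0)$ and $g(y_0)$, so combining them into $(g\circ f)_\#$ first requires reconciling the base points $f(x_0)$ and $y_0$ by a base point change along $g\circ\alpha$. The care needed is essentially bookkeeping --- tracking which $\varphi$ lives between which pair of fundamental groups --- and the naturality of base point change under continuous maps is exactly what makes the composition telescope. Everything else is a faithful adaptation of the proofs of Theorem~\ref{utility}, Corollary~\ref{hmpty}, and Lemma~\ref{agree}.
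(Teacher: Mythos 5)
Your proposal is correct and follows essentially the same route as the paper: the paper likewise deduces injectivity of $f_\#$ and $g_\#$ at all base points to get $f(w(X))\subseteq w(Y)$ and $g(w(Y))\subseteq w(X)$, uses naturality of base point change to show $\varphi_\delta\circ (g\circ f)_\#=id$ on $\pi_1(X,x)$ for the explicit path $\delta=(g\circ f\circ\gamma)^-\cdot(g\circ\alpha)\cdot\beta\cdot\gamma$ (your $\eta^-$), and then applies Lemma~\ref{agree} to the pair $g\circ f$, $id_X$ exactly as you do. Your extra bookkeeping of the $\varphi$'s and the explicit contradiction with $x\in w(X)$ just spell out steps the paper leaves implicit.
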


\begin{proof} For every $x\in X$, $f_\#:\pi_1(X,x)\rightarrow \pi_1(Y,f(x))$ is injective and for every $y\in Y$, $g_\#:\pi_1(Y,y)\rightarrow \pi_1(X,g(y))$ is injective. Therefore, $f(w(X))\subseteq w(Y)$ and $g(w(Y))\subseteq w(X)$. Let $x\in w(X)$. Choose a path $\gamma$ in $X$ from $x_0$ to $x$. Put $\delta=(g\circ f\circ \gamma)^-\cdot (g\circ \alpha)\cdot \beta \cdot \gamma$.
 Since $\varphi_{(g\circ \alpha)\cdot \beta}\circ (g\circ f)_\#=\varphi_\beta\circ g_\#\circ \varphi_\alpha \circ f_\#=id:\pi_1(X,x_0)\rightarrow \pi_1(X,x_0)$, we have $\varphi_\delta\circ (g\circ f)_\#=id:\pi_1(X,x)\rightarrow  \pi_1(X,x)$. By Lemma~\ref{agree}, $g(f(x))=x$. Similarly, $f(g(y))=y$ for all $y\in w(Y)$.
\end{proof}

\end{document}